\newtheorem{thm}{Theorem}[section]
\newtheorem{lem}[thm]{Lemma}
\theoremstyle{definition}
\newtheorem{rmk}[thm]{Remark}
\newcommand{\be}{\begin{eqnarray}}
\newcommand{\ee}{\end{eqnarray}}
\newcommand{\beal}{\begin{aligned}}
\newcommand{\enal}{\end{aligned}}
\newcommand{\eps}{\varepsilon}
\newcommand{\tet}{\theta}
\newcommand{\sg}{\sigma}
\newcommand{\lb}{\lambda}
\newcommand{\E}{\mathbb{E}}
\newcommand{\Ev}{\mathbb{E}v}
\newcommand{\Eu}{\mathbb{E}u}
\newcommand{\Ef}{\mathbb{E}f}
\newcommand{\Prob}{\mathbb{P}}
\newcommand{\T}{\mathbb{T}}
\newcommand{\R}{\mathbb{R}}
\newcommand{\A}{\mathbb{A}}
\newcommand{\Z}{\mathbb{Z}}
\newcommand{\Lb}{\Lambda}
\newcommand{\om}{\omega}
\newcommand{\hatEv}{\hat{\mathbb{E}}v}
	\def\textb{\textcolor{blue}}
	\def\textr{\textcolor{red}}
\title{Random Iteration of Maps on a Cylinder and diffusive behavior}
\author{
O. Castej\'on\footnote{Universitat Polit\`ecnica de Catalunya,
oriol.castejon@gmail.com},\ \ \ 
V. Kaloshin\footnote{University of Maryland at College Park,
vadim.kaloshin@gmail.com}}
\begin{document}
\maketitle

\begin{abstract}
In this paper we propose a model of random 
compositions of maps of a cylinder, which in the simplified 
form is as follows: $(\theta,r)\in \T\times \R=\mathbb A$ and 
\begin{eqnarray} \nonumber
f_{\pm 1}:
\left(\begin{array}{c}\theta\\r\end{array}\right) & 
\longmapsto &  
\left(\begin{array}{c}\theta+r+\eps u_{\pm 1}(\theta,r).
\\
r+\eps v_{\pm 1}(\theta,r).
\end{array}\right),
\end{eqnarray} 
where $u_\pm$ and $v_\pm$ are smooth and $v_\pm$ 
are trigonometric polynomials in $\theta$ such that 
$\int v_\pm(\theta,r)\,d\theta=0$ for each $r$. 
We study the random compositions  
$$
(\theta_n,r_n)=f_{\om_{n-1}}\circ \dots \circ f_{\om_0}(\theta_0,r_0)
$$
with $\om_k \in \{-1,1\}$ with equal probabilities. 
We show that under natural non-degeneracy hypothesis
for $n\sim \eps^{-2}$ the distributions 
of $r_n-r_0$ weakly converge to a diffusion process 
with explicitly computable drift and variance. 

In the case of random iteration of the standard maps 
\begin{eqnarray} \nonumber
f_{\pm 1}:
\left(\begin{array}{c}\theta\\r\end{array}\right) & 
\longmapsto &  
\left(\begin{array}{c}\theta+r+\eps v_{\pm 1}(\theta).
\\
r+\eps v_{\pm 1}(\theta)
\end{array}\right),
\end{eqnarray}
where $v_\pm$ are trigonometric polynomials 
such that $\int v_\pm(\theta)\,d\theta=0$
we prove a vertical central limit theorem. 
Namely, for $n\sim \eps^{-2}$ the distributions 
of $r_n-r_0$ weakly converge to a normal 
distribution $\mathcal N(0,\sigma^2)$ for 
$\sigma^2=\frac14\int (v_+(\theta)-v_-(\theta))^2\,d\theta$. 

Such random models arise as a restrictions to a 
Normally Hyperbolic Invariant Lamination for 
a Hamiltonian flow of the generalized example of Arnold.
We hope that this mechanism of stochasticity sheds some 
light on formation of diffusive behaviour
at resonances of nearly integrable Hamiltonian systems. 
\end{abstract}

\tableofcontents

\section{Introduction}

\subsection{Motivation: Arnold diffusion and instabilities}

By Arnold-Louiville theorem a completely integrable Hamiltonian system 
can be written in action-angle coordinates, namely, for action 
$p$ in an open set $U\subset \R^n$ and angle $\theta$ on 
an $n$-dimensional torus $\T^n$ there  is a function $H_0(p)$ 
such that equations of motion have the form 
\[
\dot \theta=\om(p),\quad \dot p=0, \qquad \text{ where }\ \om(p):=\partial_p H_0(p).
\]
The phase space is foliated by invariant $n$-dimensional tori $\{p=p_0\}$ 
with either periodic or quasi-periodic motions 
$\theta(t)=\theta_0+t\,\om (p_0)$ (mod 1). There are many different 
examples of integrable systems (see e.g. wikipedia).

It is natural to consider small Hamiltonian perturbations 
\[
H_\eps (\theta,p)=H_0(p)+\eps H_1(\theta,p),\qquad \theta\in\T^n,\ p\in U
\]
where $\eps$ is small.  The new equations of motion become  
\[
\dot \theta=\om(p)+\eps \partial_pH_1,\quad \dot p=-\eps \partial_\theta H_1. \qquad \qquad 
\]

In the sixties, Arnold \cite{Arn64} (see also \cite{Arn89, Arn94}) 
conjectured that {\it for a generic analytic perturbation there 
are orbits $(\theta,p)(t)$ for which the variation of the actions 
is of order one, i.e. $\|p(t)-p(0)\|$ that is bounded from 
below independently of $\eps$ for all $\eps$ sufficiently small.}

See \cite{BKZ,Ch,KZ12,KZ14a,KZ14b} about recent 
progress proving this conjecture for convex Hamiltonians. 

\subsection{KAM stability}

Obstructions to Arnold diffusion, and to any form of instability 
in general, are widely known, following the works of Kolmogorov, Arnold, and Moser called nowadays KAM theory. 
The fundamental result says that for a properly 
non-degenerate $H_0$ and for all sufficiently regular perturbations 
$\eps H_1$, the system defined by $H_\eps$ still has many 
invariant $n$-dimensional tori. These tori are small deformation 
of unperturbed tori and measure of the union of these invariant 
tori  tends to the full measure as $\eps$ goes to zero. 

One consequence of KAM theory is that for $n=2$ there are 
no instabilities. Indeed, generic energy surfaces $S_E=\{H_\eps =E\}$ are $3$-dimensional manifolds, KAM tori are $2$-dimensional. 
Thus, KAM tori separate surfaces $S_E$ and prevent orbits from diffusing. 

\subsection{A priori unstable systems}
As an interesting model \cite{Arn64} Arnold proposed to study 
the following example 
\be \label{eq:Hamiltonian}
\beal 
H_\eps (p,q,I,\varphi,t)=\dfrac{I^2}{2}+H_0(p,q)
+\eps H_1(p,q,I,\varphi,t):=
\qquad \qquad  \qquad \qquad 
\\
=\underbrace{\dfrac{\ \ \ \ \ \ \ I^2\ \ \ \ \ \ \ }{2}}_{harmonic\ oscillator}+
\underbrace{\dfrac{p^2}{2}+(\cos q-1)}_{pendulum}+
\eps H_1 (p,q,I,\varphi,t),
\enal
\ee
where $q,\varphi,t\in \T$ are angles, $p,I\in \R$ are actions 
(see Fig. \ref{fig:rotor-pendulum}) and $H_1=(\cos q-1)(\cos \varphi+\cos t)$.

\begin{figure}[h]
  \begin{center}
  \includegraphics[width=10cm]{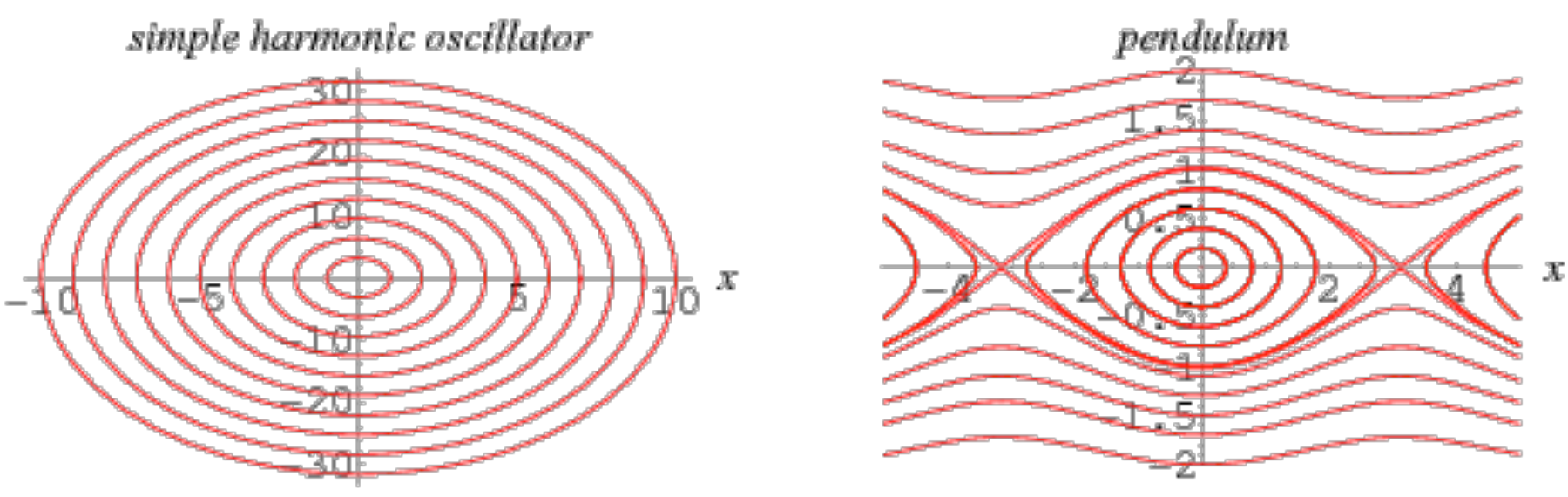}
  \end{center}
  \caption{The rotor times the pendulum }
  \label{fig:rotor-pendulum}
\end{figure}

For $\eps=0$ the system is a direct product of the harmonic 
oscillator  $\ddot \varphi=0$ and the pendulum $\ddot q=\sin q$. 
Instabilities occur when the $(p,q)$-component follows 
the separatrices $H_0(p,q)=0$ and passes near the saddle
$(p,q)=(0,0)$. Equations of motion for $H_\eps$ have 
a (normally hyperbolic) invariant cylinder $\Lb_\eps$ 
which is $\mathcal{C}^1$ close to $\Lb_0=\{p=q=0\}$.  
Systems having an invariant cylinder with a family of 
separatrix loops are called {\it an apriori unstable}. 
Since they were introduced by Arnold \cite{Arn64}, they 
received a lot of attention both in mathematics and physics 
community see e.g. \cite{Be,CY,Ch,CV,DLS,GL,T2,T3}. 

Chirikov \cite{Ch} and his followers made extensive 
numerical studies for the Arnold example. It indicates that 
{\it the $I$-displacement  behaves  randomly, 
where randomness is due to choice of initial conditions
near $H_0(p,q)=0$}.

More exactly, integration of solutions whose ``initial conditions''
randomly chosen $\eps$-close to $H_0(p,q)=0$ and integrated 
over time $\sim -\eps^{-2}\ln \eps$\,-time. This leads to the $I$-
displacement being of order of one and having some 
distribution. This coined the name for this phenomenon:
{\it Arnold diffusion}.

Let $\eps=0.01$ and $T=-\eps^{-2}\ln \eps$. 
On Fig. \ref{fig:histograms} we present several 
histograms plotting displacement of the $I$-component 
after time $T, 2T, 4T,8T$ with 6 different groups of initial 
conditions,  and histograms of $10^6$ points. In each 
group we start with a large set of initial conditions close 
to $p=q=0,\ I=I^*$.\footnote{These histograms are part of 
the forthcoming paper of the second author with P. Roldan 
with extensive numerical analysis of dynamics of the Arnold's 
example.} One of the distinct features is that only one distribution 
(a) is close symmetric, while in all others have a drift. 

\begin{figure}[h]
  \begin{center}
  \includegraphics[width=12cm]{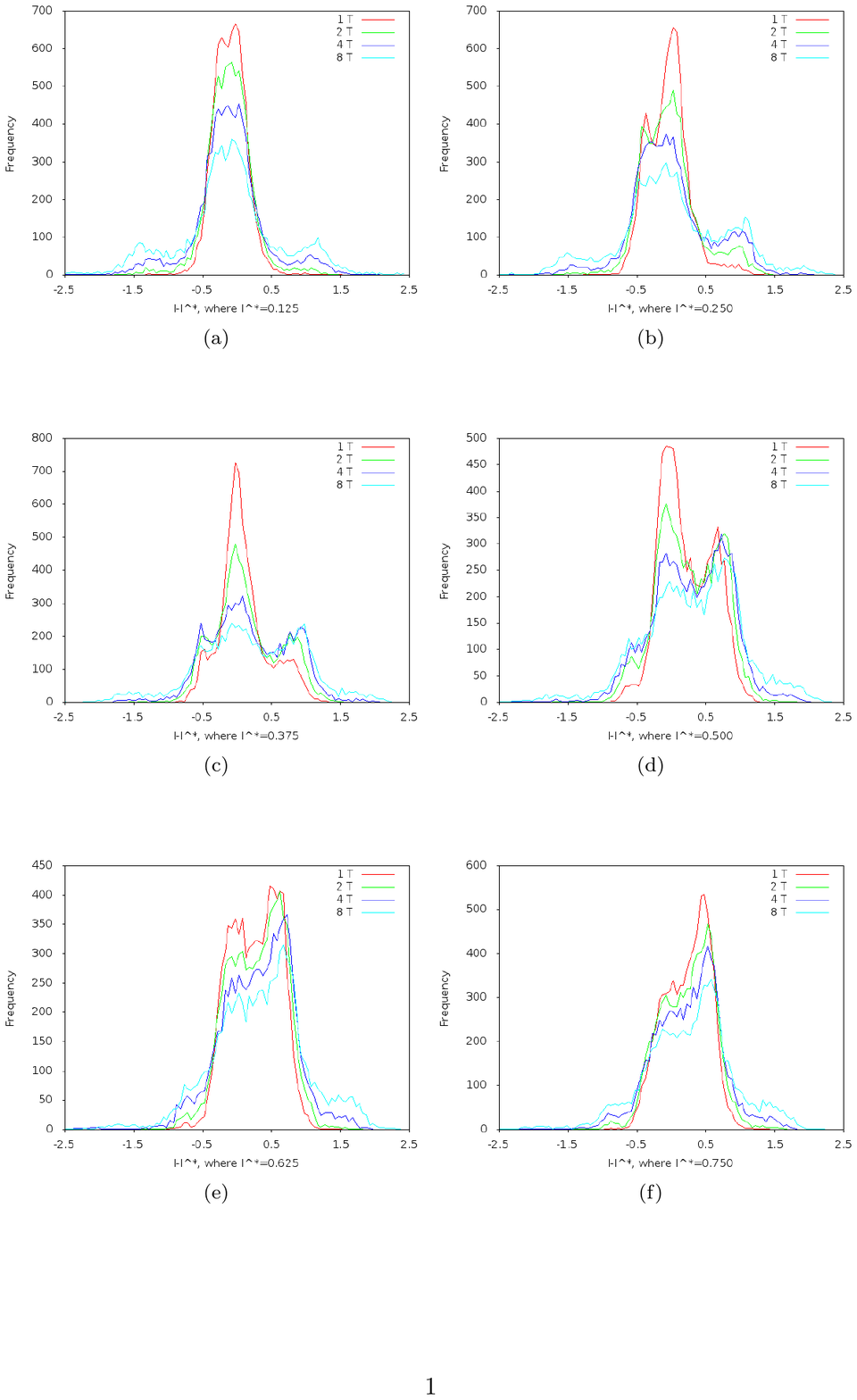}
  \end{center}
  \label{fig:histograms}
\end{figure}

A similar stochastic behaviour was observed numerically in 
many other nearly integrable problems (\cite{Ch} pg. 370, 
\cite{DL, La}, see also \cite{SLSZ}). To give another illustrative 
example consider motion of asteroids in the asteroid belt.

\subsection{Random fluctuations of eccentricity in 
Kirkwood gaps in the asteroid belt}
The asteroid belt is located between orbits of Mars and 
Jupiter and has around one million  asteroids of diameter 
of at least one kilometer. When astronoters build 
a histogram based on orbital period of asteroids there are 
well known gaps in distribution called {\it Kirkwood gaps}
(see Figure below).

\begin{figure}[h]
  \begin{center}
  \includegraphics[width=8cm]{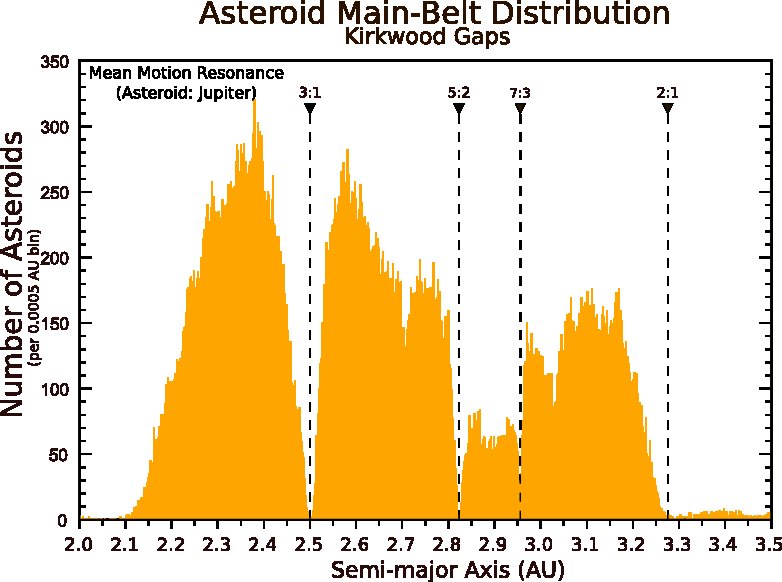}
  \end{center}
  \label{fig:Kirkwood-gaps}
\end{figure}
These gaps occur when ratio of of an asteroid and of 
Jupiter is a rational with small denominator: $1/3,2/5,3/7$.  
This correspond to so called {\it mean motion resonances
for the three body problem}. Wisdom \cite{Wi} made 
a numerical analysis of dynamics at mean motion resonance 
and observed {\it random fluctuations of eccentricity} 
of asteroids. As these fluctuations grow and eccentricity 
reaches a certain critical value an orbit of a hypothetic asteroid 
starts to cross the orbit of Mars. This eventually leads either 
to a collision of the asteroid with Mars or a close encounter. 
The latter changes the orbit so drastically that almost certainly 
it disappears from the asteroid belt. In \cite{FGKR} we only 
managed to prove existence of certain orbits whose eccentricity 
change by $O(1)$ for the restricted planar three body problem. 
Outside of these resonances one could argue that KAM 
theory provides stability \cite{Mo}.

\subsection{Random iteration of cylinder maps}
Consider the time one map of $H_\eps$, denoted 
$$
F_\eps:(p,q,I,\varphi)\to (p',q',I',\varphi').
$$ 
It turns out that for initial conditions $\eps$-close 
to $H_0(p,q)=0$, except of a hypersurface, one can define 
a return map to an $O(\eps)$-neighborhood of $(p,q)=0$. 
Often such a map is called {\it a separatrix map} and in 
the $2$-dimensional case was introduced by physicists 
Filonenko-Zaslavskii \cite{FZ}. 
In multidimensional setting such a map was defined and 
studied by Treschev \cite{PT,T1,T2,T3}. 

It turns starting near $(p,q)=0$ and iterating $F_\eps$ until 
the orbit comes back $(p,q)=0$ leads to a family of maps 
of a cylinder 
$$
f_{\eps,p,q}:(I,\varphi) \to (I',\varphi'), \qquad 
(I,\varphi)\in \A=\R\times \T
$$ 
which are close to integrable. Since at $(p,q)=0$ 
the $(p,q)$-component has a saddle, there is a sensitive 
dependence on initial condition in $(p,q)$ and returns do 
have some randomness in $(p,q)$. The precise nature 
of this randomness at the moment is not clear. There are 
several coexisting behaviours, including unstable diffusive,
stable quasi-periodic, orbits can stick to KAM tori, and
which one is dominant is to be understood. May be 
mechanism of capture into resonances \cite{Do}
is also relevant in this setting. 

In \cite{GK} we construct a normally hyperbolic lamination 
(NHL) for an open class of trigonometric perturbations 
of the form 
$$
H_1=(\cos q-1)P(\exp(i\varphi),\exp(i t)).
$$ 
Constructing unstable orbits along NHL is also discussed in 
\cite{dlL}. In general, NHL give rise to a skew shift. 
For example, let $\Sigma=\{-1,1\}^\Z$ be the space of 
infinite sequences of $-1$'s and $1$'s and 
$\sigma:\Sigma \to \Sigma$ be the standard shift. 

\vskip 0.1in 

{\it Consider a skew product of cylinder maps 
$$F:\mathbb A \times \Sigma \to \mathbb A \times \Sigma,
\qquad F(r,\theta;\om)=(f_\om(r,\theta),\sigma \om),  
$$
where each $f_\om(r,\theta)$ is a nearly integrable cylinder 
maps, in the sense that it almost preserves the $r$-component
\footnote{The reason we switch 
from the $(I,\varphi)$-coordinates on  the cylinder to 
$(r,\theta)$ is because we perform a coordinate change.}.} 

\vskip 0.1in 

The goal of the present paper is to study a wide enough 
class of skew products so that they arise in Arnold's example
with a trigonometric perturbation of the above type 
(see \cite{GK}).
 
Now we formalize our model and present the main result.

\subsection{Diffusion processes and infinitesimal generators}
\label{sec:diffusion-generators}

In order to formalise the statement about diffusive behaviour 
we need to recall some basic probabilistic notions. 
Consider a Brownian motion
 $\{B_t,\, t\ge 0\}$. 


A Brownian motion is a properly chosen limit of the standard 
random walk. A generalisation of a Brownian motion is 
{\it a diffusion process} or {\it an Ito diffusion}. To define it 
let $(\Omega,\Sigma,P)$ be a probability space. 
Let $R:[0,+\infty) \times \Omega \to \R$. It is called an Ito diffusion 
if it satisfies {\it a stochastic differential equation} of the form
\begin{equation}\label{eq:diffusion}
\mathrm{d} R_{t} = b(R_{t}) \, \mathrm{d} t + 
\sigma (R_{t}) \, \mathrm{d} B_{t},
\end{equation}
where B is an Brownian motion, $b : \R \to \R$ and 
$\sigma : \R \to \R$ are Lipschitz functions called 
the drift and the variance respectively. For a point 
$r \in \R$, let $\mathbb{P}_r$ denote the law of $X$ 
given initial data $R_0 = r$, and let $\E_r$ denote 
expectation with respect to $\mathbb{P}_r$.

The {\it infinitesimal generator} of $R$ is the operator $A$, 
which is defined to act  on suitable functions $f :\R\to \R$ by
\[
A f (r) = \lim_{t \downarrow 0} \dfrac{\E_{r} [f(R_{t})] - f(r)}{t}.
\]
The set of all functions $f$ for which this limit exists at 
a point $r$ is denoted $D_A(r)$, while $D_A$ denotes 
the set of all $f$'s for which 
the limit exists for all $r\in \R$. One can show that any 
compactly-supported $\mathcal{C}^2$  function $f$ 
lies in $D_A$ and that
\be \label{eq:diffusion-generator}
Af(r)=b(r) \dfrac{\partial f}{\partial r}+ \dfrac 12 \sigma^2(r)
\dfrac{\partial^2 f}{\partial r \partial r}.
\ee
The distribution of a diffusion process is characterise  by 
the drift $b(r)$ and the variance $\sigma^2(r)$.

\section{The model and statement of the main result}
Let $\eps>0$ be a small parameter and $l\ge 12$
be an integer. Denote by $\mathcal O_l(\eps)$  
a $\mathcal C^l$ function whose $\mathcal C^l$ norm is 
bounded by $C\eps$ with $C$ independent of $\eps$. 
Similar definition applies for a power of $\eps$. As before 
$\Sigma$ denotes $\{0,1\}^\Z$ and 
$\om=(\dots,\om_0,\dots)\in \Sigma$. 

Consider two nearly integrable maps:
\begin{eqnarray} \label{mapthetar}
f_\om:\mathbb{T}
\times \mathbb{R}
& \longrightarrow & 
\mathbb{T} \times \mathbb{R} \qquad \qquad 
\qquad \qquad \qquad \qquad
\nonumber\\
f_\om:
\left(\begin{array}{c}\theta\\r\end{array}\right) & 
\longmapsto &  
\left(\begin{array}{c}\theta+r+\eps u_{\om_0}(\theta,r)+
\mathcal O_l(\eps^{1+a},\om)
\\
r+\eps v_{\om_0}(\theta,r)+
\eps^2 w_{\om_0}(\theta,r)+\mathcal O_l(\eps^{2+a},\om)
\end{array}\right).
\end{eqnarray} 
for $\om_0\in \{-1,1\}$, where $u_{\om_0},\ v_{\om_0},$
and $w_{\om_0}$ are bounded $\mathcal{C}^l$ functions, 
$1$-periodic in $\theta$, $\mathcal O_l(\eps^{1+a},\om)$ 
and $\mathcal O_l(\eps^{2+a},\om)$ denote  remainders 
depending on $\om$ and uniformly $C^l$ bounded in $\om$, 
and $0<a\le 1/6$. Assume 
$$
\max |v_i(\theta,r)|\le 1,
$$
where maximum is taken over $i=-1,1$ and all 
$(\theta,r)\in \A$, otherwise, renormalize $\eps$. 

We study random iterations of the maps $f_1$ and $f_{-1}$, 
such that at each step the probability of performing either 
map is $1/2$. Importance of understanding iterations of 
several maps for problems of diffusion is well known
(see e.g. \cite{K,Mo}). 

Denote the expected potential and 
the difference of potentials by 
$$\Eu(\theta,r):=
\frac 12 (u_1(\theta,r)+u_{-1}(\theta,r)),\ \ \  \Ev(\theta,r):=\frac 12 (v_1(\theta,r)+v_{-1}(\theta,r)),$$
$$u(\theta,r):=\frac 12 (u_1(\theta,r)-u_{-1}(\theta,r)),\ \ \ 
v(\theta,r):=\frac 12 (v_1(\theta,r)-v_{-1}(\theta,r)).$$

Suppose the following assumptions hold:
\begin{itemize}
\item[{\bf [H0]}] ({\it zero average})
Let for each $r\in \R$ and $i=\pm 1$ we have  
$\int v_i(\theta,r)\,d\theta=0$.

\item[{\bf [H1]}] ({\it no common zeroes}) 
For each integer $n\in \Z$ potentials $v_{1}(\theta,n)$ and 
$v_{-1}(\theta,n)$ have no common zeroes and, equivalently, 
$f_1$ and $f_{-1}$ have no fixed points;

\item[{\bf [H2]}] 
for each $r\in \R$ we have $\int_0^1\ v^2(\theta,r)d\theta=:\sigma(r) \neq0$;

\item[{\bf [H3]}] The functions $v_i(\theta,r)$ are trigonometric polynomials in $\theta$, i.e. for some positive integer $d$ we have 
$$
v_i(\theta,r)=\sum_{k\in \Z,\ |k|\le d} v^{(k)}(r)\exp 2\pi ik\theta. 
$$
\end{itemize}

For $\omega\in\{-1,1\}^\Z$ we 
can rewrite the maps $f_{\om}$ in the following form:
\begin{equation*}
f_{\omega}
\left(\begin{array}{c}\theta\\r\end{array}\right)\longmapsto
\left(\begin{array}{c}\theta+r+\eps \Eu(\theta,r)+ 
\eps\omega_0 u(\theta,r)+\mathcal O_l(\eps^{1+a},\om)
\\
r+\eps \Ev(\theta,r)
+\eps\omega_0 v(\theta,r)+\eps^2 w_{\om_0}(\theta,r)
+\mathcal O_l(\eps^{2+a},\om)
\end{array}\right).
\end{equation*}

Let $n$ be positive integer and $\omega_k\in\{-1,1\}$, $k=0,\dots,n-1$, be independent random variables with $\mathbb{P}\{\omega_k=\pm1\}=1/2$ and $\Omega_n=\{\omega_0,\dots,\omega_{n-1}\}$. 
Given an initial condition $(\theta_0,r_0)$ we denote:
\be \label{eq:random-seq} 
(\theta_n,r_n):=f^n_{\Omega_n}(\theta_0,r_0)=
f_{\omega_{n-1}}\circ f_{\omega_{n-2}}\circ \cdots
\circ f_{\omega_0}(\theta_0,r_0).
\ee

\begin{itemize}
\item[{\bf [H4]}]  ({\it no common periodic orbits}) 
Suppose for any rational $r=p/q\in\mathbb Q$ 
with $p,q$ relatively prime, $1\le |q|\le 2d$ and any $\theta\in \T$   
$$
\sum_{k=1}^q\left[v_{-1}(\theta+\frac kq,r)- v_1(\theta+\frac kq,r)\right]^2\ne 0.
$$
This prohibits $f_1$ and $f_{-1}$ to have common periodic 
orbits of period $|q|$. 

\item[{\bf [H5]}] ({\it no degenerate periodic points}) 
Suppose for any rational $r=p/q\in\mathbb Q$ 
with $p,q$ relatively prime, $1\le |q|\le d$, the function:
$$\Ev_{p,q}(\theta,r)=\sum_{\substack{k\in \mathbb{Z}\\0<|kq|<d}}\Ev^{kq}(r)e^{2\pi ikq\theta}$$
has distinct non-degenerate zeroes, where $\Ev^{j}(r)$ 
denotes the $j$--th Fourier coefficient of $\Ev(\theta,r)$.
\end{itemize}

A straightforward calculation shows that:
\begin{equation}\label{mapthetanrn}
\begin{array}{rcl}
\theta_n&=&\displaystyle\theta_0+nr_0+\eps \sum_{k=0}^{n-1}
\left(\Eu(\theta_k,r_k)+ \Ev(\theta_k,r_k)\right)
\\
&&\displaystyle+
\eps\sum_{k=0}^{n-1} \omega_k
\left(u(\theta_k,r_k)+v(\theta_k,r_k)\right) 
+\mathcal O_l(n\eps^{1+a})
\bigskip\\
r_n&=&\displaystyle r_0+\eps\sum_{k=0}^{n-1}
\Ev(\theta_k,r_k)+\eps
\sum_{k=0}^{n-1}\omega_kv(\theta_k,r_k)
+\mathcal O_l(n\eps^{2+a})
\end{array}
\end{equation}
Even though these maps might not be area-preserving, 
using normal forms 
we will simplify these maps significantly on a large domain of the cylinder.



\begin{thm}\label{maintheorem}
Assume that in the notations above conditions {\bf [H0-H5]} 
hold. Let $n_\eps \eps^2 \to s>0$ as $\eps\to 0$ for some 
$s>0$. Then as $\eps \to 0$ the distribution of $r_{n_\eps}-r_0$ 
converges weakly to $R_s$, where $R_\bullet$ is 
a diffusion process of the form \eqref{eq:diffusion}, 
with the drift and the variance 
$$
b(R)=\int_0^1E_2(\theta,R)\,d\theta,
\qquad \sigma^2(R)=\int_0^1v^2(\theta,R)\,d\theta. 
$$
for some function $E_2$, defined in (\ref{eq:drift}).
\end{thm}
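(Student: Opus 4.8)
The plan is to identify the limit via martingale-problem/generator convergence, following the classical diffusion-approximation scheme (Stroock--Varadhan, Kushner). The key structural difficulty is that $r_n$ changes only by $O(\eps)$ per step while $\theta_n$ moves by $O(1)$ (since $\theta_{k+1}\approx\theta_k+r_k$), so $\theta$ is the ``fast'' variable and $r$ the ``slow'' one, and one must average the increments of $r$ over the behaviour of $\theta$. The obstacle is that the fast dynamics $\theta\mapsto\theta+r$ is \emph{not} uniformly mixing: for rational $r=p/q$ with small $q$ it is periodic, and this is exactly why hypotheses \textbf{[H1]},\textbf{[H4]},\textbf{[H5]} (and the trigonometric-polynomial assumption \textbf{[H3]}, bounding the relevant frequencies by $d$) are imposed. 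So the first block of work, which I expect to be the main obstacle, is a quantitative equidistribution / averaging estimate: show that along a random orbit the empirical averages $\frac1N\sum_{k} g(\theta_k,r_k)$ over a window of length $N$ (with $1\ll N\ll\eps^{-2}$, e.g. $N\sim\eps^{-1}$) are close to $\int_0^1 g(\theta,r)\,d\theta$, \emph{uniformly} including near the finitely many dangerous rationals, using the normal-form simplification promised right before the theorem to put the map in a clean form on a large domain and handling the resonant $r$ by a separate local analysis exploiting \textbf{[H4]}--\textbf{[H5]} (non-degenerate zeroes of the averaged potential push the orbit through the resonance before damage accumulates).

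Granting the averaging estimate, I would proceed in the standard steps. First, tightness: from the second line of \eqref{mapthetanrn}, $r_{n_\eps}-r_0=\eps\sum_{k<n_\eps}\Ev(\theta_k,r_k)+\eps\sum_{k<n_\eps}\omega_k v(\theta_k,r_k)+O(n_\eps\eps^{2+a})$; the martingale part $\eps\sum\omega_k v(\theta_k,r_k)$ has bracket $\eps^2\sum v^2(\theta_k,r_k)\le n_\eps\eps^2\|v\|^2=O(1)$, and $n_\eps\eps^{2+a}=O(\eps^a)\to0$, so Kolmogorov/Aldous-type criteria give tightness of the interpolated process $R^\eps_t:=r_{\lfloor t/\eps^2\rfloor}-r_0$ in $C[0,s]$. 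Second, identification of the limit: I compute, for a test function $f\in C_c^2$, the one-step conditional increment of $f(R^\eps)$ and sum over a mesoscopic block of $N\sim\eps^{-1}$ steps. The drift contribution is $\eps\sum_k\E[\,\Ev(\theta_k,r_k)+\omega_k v(\theta_k,r_k)\mid\mathcal F_k\,]$; the $\omega_k$-term vanishes in expectation, but the next order Taylor term in $\theta$ together with the $\eps^2 w$ term produces a nontrivial contribution — this is where the function $E_2$ of \eqref{eq:drift} comes from: unwinding one step and using $\theta_{k+1}=\theta_k+r_k+O(\eps)$ shows the relevant second-order/correction terms, after averaging in $\theta$, assemble into $\int_0^1 E_2(\theta,R)\,d\theta$. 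The diffusion contribution is $\tfrac12\eps^2\sum_k\E[(\omega_k v(\theta_k,r_k))^2\mid\mathcal F_k]=\tfrac12\eps^2\sum_k v^2(\theta_k,r_k)$, which by the averaging estimate is $\approx\tfrac12(N\eps^2)\int_0^1 v^2(\theta,R)\,d\theta$, giving $\sigma^2(R)=\int_0^1 v^2(\theta,R)\,d\theta$. Cross terms between drift and martingale, and the $O(\eps^{1+a})$ remainders, are $o(1)$ over the block.

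Assembling the blocks, one shows that for every $f\in C_c^2(\R)$ the process $f(R^\eps_t)-\int_0^t \bigl(b(R^\eps_u)f'(R^\eps_u)+\tfrac12\sigma^2(R^\eps_u)f''(R^\eps_u)\bigr)\,du$ is, up to an error tending to $0$ in probability, a martingale with respect to the natural filtration — i.e. any subsequential limit of $R^\eps$ solves the martingale problem for the generator \eqref{eq:diffusion-generator} with $b$ and $\sigma^2$ as in the statement. Since $b$ is bounded (being an integral of a continuous function) and $\sigma^2$ is bounded and, by \textbf{[H2]}, strictly positive, this martingale problem is well-posed, so the limit is unique and equals the law of $R_s$ started at $0$; combined with tightness this yields weak convergence $r_{n_\eps}-r_0\Rightarrow R_s$. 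The central-limit corollary stated in the abstract for the pure standard-map case follows since there $\Ev\equiv0$ forces $b\equiv0$ and $v=\tfrac12(v_+-v_-)$, $r$-independent, so $R_s$ is Brownian with variance $s\cdot\tfrac14\int(v_+-v_-)^2\,d\theta$, which at $s=1$ gives $\mathcal N(0,\sigma^2)$ with $\sigma^2=\tfrac14\int(v_+-v_-)^2\,d\theta$.
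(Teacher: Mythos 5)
Your overall strategy (martingale-problem identification with tightness and generator convergence) is the same as the paper's, but there is a genuine gap in the central technical claim. You assert that the main obstacle is a quantitative averaging estimate showing $\frac{1}{N}\sum_k g(\theta_k,r_k)\approx\int_0^1 g(\theta,r)\,d\theta$, and that this should hold ``uniformly including near the finitely many dangerous rationals,'' with the resonant $r$ handled by a ``separate local analysis.'' In fact no such uniform averaging estimate can hold near the real resonances $r=p/q$ with $|q|\le d$: there $\theta_{k+1}\approx\theta_k+p/q$ is close to periodic, the empirical average along a block converges to the discrete average $\frac{1}{q}\sum_{i=0}^{q-1}g(\theta_0+ip/q,r)$, not to $\int_0^1 g(\theta,r)\,d\theta$, and moreover the deterministic part of the map is pendulum-like (nontrivial resonant potential $\Ev_{p,q}$), so $r$ is \emph{not} slowly drifting through in a way that the averaging intuition captures. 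The paper handles this by decomposing the cylinder into Totally Irrational, Imaginary Rational and Real Rational strips plus transition zones: only in the TI strips does your averaging estimate hold (Lemma \ref{lemmasigma2}); in IR strips the local drift and variance are genuinely $\theta$-dependent; and in RR strips the process must be analyzed in the conserved quantity $H^{p/q}(\theta,R)$ of a rescaled pendulum, where the limit is a diffusion on a \emph{graph} in the sense of Freidlin–Wentzell, not on a line. The theorem then follows only because a separate local-time argument shows the orbit spends a negligible fraction of time in the IR and RR strips, so the $\theta$-dependent and graph-diffusion local behaviours do not contaminate the global limit. This measure/local-time step, together with a normal form theorem isolating the resonant terms and producing the function $E_2$, is the core of the proof and is absent from your proposal; replacing it with a uniform averaging estimate is a claim that is simply false near resonance. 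You also substitute the direct martingale formulation for the paper's discrete resolvent criterion with weights $e^{-\lambda\eps^2 k}$ (adapted from Freidlin–Wentzell, Ch.\ 8), which is a legitimate cosmetic difference, but the resolvent form is better suited to the strip-by-strip stopping-time decomposition the argument actually needs.
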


\begin{itemize}
\item In the case that $u_{\pm1}=v_{\pm 1}$ and they are 
independent of $r$ we have two area-preserving standard 
maps. In this case the assumptions become 
\begin{itemize}
\item{\bf [H0]} $\int v_i(\tet)d\tet=0$ for $i=\pm 1$;
\item{\bf [H1]} $v_1$ and $v_{-1}$ have no common zeroes;
\item{\bf [H2]} $v$ is not identically zero. 
\item{\bf [H3]} the functions $v_i$ are trigonometric
polynomials;
\item{\bf [H4]} the same condition as above without 
dependence on $r$;
\item{\bf [H5]} the same condition as above without 
dependence on $r$;
\end{itemize}
A good example is $u_1(\tet)=v_1(\tet)=\cos \tet$ and 
$u_{-1}(\tet)=v_{-1}(\tet)=\sin \tet$. In this case 
$$
\int_0^1E_2(\tet,r)d\tet\equiv 0,\qquad 
\sigma^2=\int_0^1 v^2(\theta)\,d\theta
$$
and for $n\le \eps^{-2}$ 
the distribution  $r_n-r_0$ converges to the zero mean 
variance $\eps n^2 \sg^2$ normal distribution, 
denoted $\mathcal N(0,\eps n^2 \sg^2)$. More generally,  
we have the following ``vertical central limit theorem'':

\begin{thm}\label{submaintheorem}
Assume that in the notations above conditions {\bf [H0-H5]} 
hold. Let $n_\eps \eps^2 \to s>0$ as $\eps\to 0$ for some 
$s>0$. Then as $\eps \to 0$ the distribution of $r_{n_\eps}-r_0$ 
converges weakly to a normal random variable 
$\mathcal{N}(0,s^2 \sigma^2).$
\end{thm}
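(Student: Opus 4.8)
The plan is to deduce Theorem~\ref{submaintheorem} from the main Theorem~\ref{maintheorem} by checking that, in the area-preserving standard-map setting $u_{\pm1}=v_{\pm1}$ independent of $r$, the drift vanishes and the variance is constant. The starting point is the representation \eqref{mapthetanrn}: under $u_{\pm1}=v_{\pm1}$ the increment of $r_n$ is driven by $\eps\sum_{k}\Ev(\theta_k)+\eps\sum_k\omega_k v(\theta_k)$, where $\Ev,v$ are now $r$-independent trigonometric polynomials with zero average (by {\bf [H0]}). Theorem~\ref{maintheorem} already gives weak convergence of $r_{n_\eps}-r_0$ to the diffusion $R_s$ with generator \eqref{eq:diffusion-generator}; so it suffices to identify $b$ and $\sigma^2$ in this special case. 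Since $v$ is $r$-independent, $\sigma^2(R)=\int_0^1 v^2(\theta)\,d\theta=\tfrac14\int_0^1(v_+(\theta)-v_-(\theta))^2\,d\theta$ is a constant $\sigma^2$, and a constant-variance, zero-drift diffusion started at $r_0$ is exactly $r_0+\sigma B_s\sim\mathcal N(0,s\sigma^2)$; matching this with the asserted $\mathcal N(0,s^2\sigma^2)$ will require care about the normalization/time-scaling convention, but is otherwise bookkeeping. (The same remark applies to the ``$\eps n^2\sigma^2$'' displayed formula, which is the pre-limit version.)

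The substantive point is therefore to show $b(R)=\int_0^1 E_2(\theta,R)\,d\theta\equiv 0$ when $u_{\pm1}=v_{\pm1}$. I would go back to the definition of $E_2$ in \eqref{eq:drift} (the second-order term surviving the averaging/normal-form procedure used to prove Theorem~\ref{maintheorem}). The drift arises as a combination of: (i) the explicit $\eps^2 w_{\om_0}$ term, which is absent here since the maps are exact area-preserving standard maps; (ii) the ``self-interaction'' term coming from substituting the first-order displacement of $\theta_k$ back into $\Ev(\theta_k)$ and averaging over $\theta$ — schematically a term like $\int \Ev'(\theta)\,(\text{first-order }\theta\text{-drift})\,d\theta$; and (iii) cross terms between the $u$- and $v$-parts. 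The mechanism that kills the drift is that, for area-preserving twist maps, the relevant averaged second-order quantity is a total derivative in $\theta$: one has identities of the form $\int_0^1 \big(\Ev\,\partial_\theta\Phi + \ldots\big)d\theta=0$ where $\Phi$ is the generating-function-type primitive of $\Ev$, because $u=v$ forces the first-order $\theta$-displacement and the $r$-displacement to be generated by the same potential. I would make this precise by writing $E_2(\theta,r)$ explicitly from \eqref{eq:drift} under the substitution $u_i=v_i$, collecting the terms into $\partial_\theta(\text{something periodic})$ plus terms with a definite-integral-zero structure inherited from {\bf [H0]}, and concluding $\int_0^1 E_2\,d\theta=0$.

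For the concrete example $u_1=v_1=\cos\theta$, $u_{-1}=v_{-1}=\sin\theta$, I would additionally verify the vanishing by direct computation as a sanity check: here $\Ev=\tfrac12(\cos\theta+\sin\theta)$, $v=\tfrac12(\cos\theta-\sin\theta)$, and the candidate drift integrand $E_2$ is a fixed trigonometric polynomial whose mean over $\T$ one computes term-by-term and finds to be $0$; the variance is $\sigma^2=\int_0^1 v^2\,d\theta=\tfrac14\int_0^1(\cos\theta-\sin\theta)^2\,d\theta=\tfrac14$. This both matches the abstract argument and pins down the normalization convention linking $R_s$, the ``$\eps n^2\sigma^2$'' prelimit, and $\mathcal N(0,s^2\sigma^2)$.

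The main obstacle I anticipate is step (ii)–(iii): extracting the precise form of $E_2$ from the normal-form construction behind Theorem~\ref{maintheorem} and proving that its $\theta$-average vanishes identically in $r$. This is delicate because $E_2$ is not simply the average of an explicit function of the original data — it is produced after the coordinate changes that flatten the resonances (cf. {\bf [H5]}), so one must track which contributions are genuine drift and which are artifacts removed by those changes. The saving grace is exactness/area-preservation: I expect that the correct statement is ``$E_2$ is, up to a periodic total $\theta$-derivative, equal to a quantity that is itself a $\theta$-derivative when $u=v$,'' and once phrased that way the integral is zero by periodicity. If that clean identity fails to hold pointwise, the fallback is to prove only the integrated identity $\int_0^1 E_2(\theta,r)\,d\theta=0$ directly from an integration-by-parts in $\theta$ using {\bf [H0]} and the twist structure, which is all Theorem~\ref{submaintheorem} actually needs.
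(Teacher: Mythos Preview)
Your approach---deduce Theorem~\ref{submaintheorem} from Theorem~\ref{maintheorem} by showing $b\equiv 0$ and $\sigma^2$ constant---is exactly what the paper does. However, you are overcomplicating the drift computation: the Normal Form Theorem~\ref{thm:normal-form} supplies the explicit formula~\eqref{averageE2},
\[
b(r)=\int_0^1\Big[\partial_r\Ev(\theta,r)\,\partial_\theta S_1(\theta,r)-\partial_\theta^2 S_1(\theta,r)\big(\Eu(\theta,r)-\Ev(\theta,r)\big)\Big]\,d\theta,
\]
and under $\Eu=\Ev=\Ev(\theta)$ both terms vanish trivially (the first because $\partial_r\Ev=0$, the second because $\Eu-\Ev=0$), so no total-derivative or integration-by-parts identity is needed; the two hypotheses kill the two terms separately. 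Your flag on the variance normalization is justified: a zero-drift constant-variance diffusion gives $R_s\sim\mathcal N(0,s\sigma^2)$, and the stated $s^2\sigma^2$ (and the prelimit ``$\eps n^2\sigma^2$'') appear to be typos in the paper.
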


\item Numerical experiments of Mockel \cite{Moe1} show 
that no common fixed points [H1] (resp. [H4]) is not neccessary 
for Theorem \ref{maintheorem} to hold. One could probably 
replaced by a weaker non-degeneracy condition, e.g. that 
the linearisation of maps $f_{\pm 1}$ at the common fixed 
point (resp. periodic points) are different. 

\item In \cite{Sa} Sauzin studies random iterations of the standard 
maps  $(\theta,r)\to (\theta+r+\lb \phi(\theta),r+\lb \phi(\theta)),$
where $\lb$ is chosen randomly from $\{-1,0,1\}$ and proves
the vertical central limit theorem; 
In \cite{MS,Sa2} Marco-Sauzin present examples of nearly 
integrable systems having a set of initial conditions 
exhibiting the vertical central limit theorem.

\item The condition [H3] that the functions $v_i$ are
trigonometric polynomials in $\theta$ seems redundant too,
however, removing it leads to considerable technical 
difficulties (see Section \ref{sec:different-strips} and 
Remark \ref{H3-assumption}). In  short, for perturbations  
by a trigonometric polynomial there are finitely many resonant 
zones. This finiteness considerably simplifies the analysis.


\item One can replace $\Sigma=\{0,1\}^\Z$ with 
$\Sigma_N=\{0,1,\dots,N-1\}^\Z$, consider any finite 
number of maps of the form (\ref{mapthetar}) and 
a transitive Markov chain with some transition probabilities. 
If conditions [H2--H4] are satisfied for the proper averages 
$\Ev$ of $v$, then Theorem \ref{maintheorem} holds. 
\end{itemize}

\section{Strategy of the proof}

\subsection{Strip decomposition}
The main idea of the proof is to divide the cylinder $\A$ in strips 
$\mathbb{T}\times I^i_\beta$, where $I^j_\beta\subset\mathbb{R},\ 
j\in \Z$ are intervals of size $\eps^\beta$, for any $0<\beta\le 1/5$. 
Then we will study how the random variable $r_n-r_0$ behaves 
in each strip. More precisely, 
decompose the process $r_n(\om), n\in\Z_+$  into infinitely 
many time intervals defined by stopping times 
\be \label{eq:stopping-time}
0<n_1<n_2<\dots,
\ee 
where 
\begin{itemize} 
\item $r_{n_i}(\om)$ is $\eps$-close to 
the boundary between $I^j_\beta$ and $I^{j+1}_\beta$ for 
some $j\in \Z$ 
\item $r_{n_{i+1}}(\om)$ is $\eps$-close to the other 
boundary of either $I^j_\beta$ or of $I^{j+1}_\beta$ and 
$n_{i+1}>n_i$ is the smallest integer with this property. 
\end{itemize}
Since $\eps\ll \eps^\beta$, being $\eps$-close to the boundary
of $I^j_\beta$ with a negligible error means jump from $I^j_\beta$ 
to the neighbour interval $I^{j\pm 1}_\beta$. In what follows for brevity 
we drop dependence of $r_n(\om)$'s on $\om$.

\subsection{Subdivision of the cylinder into domains with 
different quantitative behaviour}\label{sec:different-strips}

Fix $b>0$ such that $0<\beta-2b<0.04,$ small 
$\gamma>0$, and $K_i:=K_i(u_1,v_1,u_2,v_2),$ $i=1,2,$ 
depending on functions $u_j,v_j,\ j=1,2$, such that 
$K_1<K_2$ and  all are independent of $\eps$. 
Consider the $\eps^\beta$-grid in $\R$. 
Denote by $I_\beta$ a segment whose end points are in the grid. 
We distinguish among three types of strips $I_\beta$. 
We will have strips of three types as well as transition 
zones from one to another. We define:

\begin{itemize}
\item\textbf{The Real Rational (RR) case:} 
A strip $I_\beta$ is called {\it real rational} 
if there exists a rational 
$p/q\in I_\beta$, with $\gcd(p,q)=1$ and $|q|\le d$. 
Clearly, there are just finitely many strips of this kind. 
However, this case is the most complicated one and 
requires a detailed study.

\item\textbf{The Imaginary Rational (IR) case:} 
A strip $I_\beta$ is called {\it imaginary rational} if 
there exists a rational $p/q\in I_\beta$, with $\gcd(p,q)=1$ 
with $d<|q|<\eps^{-b}$. 

\medskip 

The reason we call these strips are imaginary rational, 
because the leading term of the angular dynamics
is a rational rotation, however, averaged systems 
appearing in the previous case are vanishing 
(see the next section). 

We show that the imaginary rational strips occupy
an $\mathcal O(\eps^\rho)$-fraction of the cylinder
(see Lm. \ref{lem:im-rational} in Sect. \ref{sec:measure-IR-RR}). 
We can show that orbits spend small fraction of the total time 
in these strips and global behaviour is determined 
by behaviours in the complement, which we call totally 
irrational.

\item\textbf{The Totally Irrational (TI) case:} 
A strip $I_\beta$ is called {\it totally irrational} if
$r \in I_\beta$ and $|r-p/q|<\eps^\beta$, with 
$\gcd(p,q)=1$, then $|q|>\eps^{-b}$. 

In this case, we show that there is a good ``ergodization'' 
and 
$$
\sum_{k=0}^{n-1}\omega_kv\left(\theta_0+k\frac{p}{q}\right)\approx \sum_{k=0}^{n-1}\omega_kv\left(\theta_0+kr_0^*\right).
$$
Loosely speaking, any $r_0^*\in I_\beta\cap(\mathbb{R} \setminus\mathbb{Q})$  can be treated as an irrational. 
These strips cover most of the cylinder and give 
the dominant contribution to the behaviour of $r_n-r_0$.
Eventually it will lead to the desired weak convergence to 
a diffusion process (Theorem \ref{maintheorem}).

\item\textbf{Transition zones, type I:} 
A zone is a transition zone if there is $p/q$ such 
that $\gcd(p,q)=1$ and $|q|\le d$ and it is defined 
by the corresponding annuli 
$K_1\eps^{1/2}\leq|r-p/q|\leq K_2 \eps^{1/6}$.

Analysis in these zones needs to be adapted as
``influence'' of real resonances is strong. 

\item\textbf{Transition zones, type II:} 
A zone is a transition zone if there is $p/q$ such 
that $\gcd(p,q)=1$ and $|q|\le d$ and it is defined 
by the corresponding annuli 
$K_2\eps^{1/6}\leq|r-p/q|\leq \gamma$.

Analysis in these zones requires an adjusted coordinates, 
otherwise, we still study the Totally Irrational and 
the Imaginary Rational strips inside of the type II
Transition Zones. 
\end{itemize}

\begin{rmk}\label{H3-assumption}
Notice that  finiteness of Real Rational strips follows 
from assumption [H3]. If the expected potential is not 
a trigonometric polynomial in $\theta$ this is not true.   
\end{rmk}

\subsection{The normal form}
The first step is to find a normal form, so that 
the deterministic part of map \eqref{mapthetanrn} 
is as simple as possible. In short, we shall see that the 
 deterministic system in both the Totally Irrational case and 
the Imaginary rational case are 
a small perturbation of the perfect twist map:
\begin{equation*}
\left(\begin{array}{c}\theta\\r\end{array}\right)
\longmapsto
\left(\begin{array}{c}\theta+r\\r\end{array}\right).
\end{equation*}
On the contrary, in the Real Rational case, the deterministic 
system will be close to a pendulum-like system:
\begin{equation*}
\left(\begin{array}{c}\theta\\r\end{array}\right)
\longmapsto
\left(\begin{array}{c}\theta+r\\r+\eps E(\theta,r)\end{array}\right),
\end{equation*}
for an ``averaged'' potential $E(\theta,r)$ (see e.g. Thm. 
\ref{thm:normal-form}, (\ref{NFfar})). 
We note that this system has the following approximate first integral:
\be \label{eq:perfect-first-int-prelim}
H(\theta,r)=\frac{r^2}{2}-\eps\int_0^\theta E(s,r)ds,
\ee 
so that indeed it is close to a pendulum-like system. 
This will lead to different qualitative behaviours when considering 
the random system. Inside the Real Rational strips as well as 
the transition zones we use $H$ as one of the coordinates.

The rigorous statement of these results about the normal forms 
is given in  
Theorem \ref{thm:normal-form}, Sect. \ref{sec:NormalForm}. 

\subsection{Analsys of the Martingale problem in each kind of strip}
The next step is to study the behaviour of the random system 
respectively in Totally Irrational, Imaginary Rational and 
Real Rational strips, as well as in the Transition Zones. 
This is done in Sections \ref{sec:TI-case}--\ref{sec:TZ-case}. 
More precisely, we use a discrete version of the scheme 
by Freidlin and Wentzell \cite{FW}, giving a sufficient condition 
to have weak convergence to a diffusion process  as $\eps\to0$ 
in terms of the associated Martingale problem 
(see Lemma \ref{lem:FW-suff-cond}). Now using 
the results proved below we derive the main result --- 
Theorem \ref{maintheorem}. This is done in two steps. 
First, we describe local behaviour in each strip and 
then we combine the information. Fix $s>0$. 

By the discrete version of Lemma \ref{lem:FW-suff-cond} is 
sufficient to prove that as $\eps\to 0$ any time 
$n\le s\eps^{-2}$ 
and any $(\theta_0,r_0)$ we have 
{\small 
\be\label{eq:suff-condition}
\beal 
 &\E\left(e^{-\lambda\eps^2n}f(r_{n})+  \right.& \\ 
 & \quad \ \  \eps^2
\sum_{k=0}^{n-1}e^{-\lambda\eps^2k}& \left.\left[\lambda f(r_k)-\left(b(r_k)f'(r_k)+\frac{\sigma^2(r_k)}{2}f''(r_k)\right)\right]\right)-f(r_0)\to 0, 
\enal
\ee}

We define Markov times $0=n_0<n_1<n_2 <\dots <n_{m-1}<n_m<n$
for some random $m=m(\om)$ such that each $n_k$ is the stopping 
time as in (\ref{eq:stopping-time}). Almost surely $m(\om)$ is 
finite.  We decompose the above sum 
\begin{eqnarray*}
\sum_{k=0}^m &&
\E\left(e^{-\lambda\eps^2n_{k+1}}f(r_{n_{k+1}})-
e^{-\lambda\eps^2n_{k}}f(r_{n_{k}})+ \right. \qquad \qquad \\
&& \left. \eps^2
\sum_{s=n_k}^{n_{k+1}}e^{-\lambda\eps^2s}\left[\lambda f(r_s)-\left(b(r_s)f'(r_s)+\frac{\sigma^2(r_s)}{2}f''(r_s)\right)\right]\right)
\end{eqnarray*}
and show that it converges to $f(r_0)$.

\subsubsection{A Totally Irrational Strip}
\label{sec:TI-prelim}
Let the drift and the variance be 
$$
b(r)=\int_0^1E_2(\theta,r)\,d\theta
\quad \text{ and }\quad \sigma^2(r)=\int_0^1v^2(\theta,r)\,d\theta,
$$
where the function $E_2$ is defined in (\ref{eq:drift}).
Let $r_0$ be $\eps$-close to the boundary of two totally 
irrational strips and let $n_\beta$ be stopping of hitting 
$\eps$-neighbourhoods of the adjacent boundaries. 
In Lemma \ref{lemmaexpectation} 
we prove that
\begin{eqnarray*}
&&\E\left(e^{-\lambda\eps^2n_\beta}f(r_{n_\beta})+ \right. \\
&& \left. \eps^2
\sum_{k=0}^{n_\beta-1}e^{-\lambda\eps^2k}\left[\lambda f(r_k)-\left(b(r_k)f'(r_k)+\frac{\sigma^2(r_k)}{2}f''(r_k)\right)\right]\right)\\
&& \qquad \qquad \qquad \qquad \qquad  \qquad \qquad 
- f(r_0)=\mathcal{O}(\eps^{2\beta+d}),
\end{eqnarray*}
for some $d>0$. 

\subsubsection{An Imaginary Rational Strip}
\label{sec:IR-prelim}

Let the drift and the variance be 
$$
b_{IR}(\theta,r)=\frac{1}{q}\sum_{k=0}^{q-1}E_2(\theta+kr,r)\quad 
\text{ and } \quad \sigma^2_{IR}(\theta,r)=\frac{1}{q} \sum_{k=0}^{q-1}v^2(\theta+kr,r).
$$
Let $r_0$ be $\eps$-close to the boundary of an imaginary 
rational strip and let $n_\beta$ be stopping of hitting 
$\eps$-neighbourhoods of the adjacent boundaries. 
In Lemma \ref{lemmaexpectation-IR} we prove that
\begin{eqnarray*}
 &&\E\left(e^{-\lambda\eps^2n_\beta}f(r_{n_\beta})+\right. \\
 && \left. \eps^2
\sum_{k=0}^{n_\beta-1}e^{-\lambda\eps^2k}\left[\lambda f(r_k)-\left(b(\theta_k,r_k)f'(r_k)+\frac{\sigma^2(\theta_k,r_k)}{2}f''(r_k)\right)\right]\right)\\
&&\qquad \qquad \qquad \qquad \qquad  \qquad \qquad 
\qquad \qquad -f(r_0)=\mathcal{O}(\eps^{2\beta+d}),
\end{eqnarray*}
 As one can see, the limiting process does not take place on 
 a line, since the drift and diffusion coefficient depend also on 
 the variable $\theta$.  

Notice that the drift $b(\theta,r)$ and the variance 
$\sigma(\theta,r)$ both are $\theta$-dependent functions. 
In Section \ref{sec:IR-cyl-to-line} we show that time 
spent in these strips is too small to affect the drift and the 
variance of the limiting process. 

\subsubsection{A Real Rational Strip}
\label{sec:RR-prelim}

Let in the rescaled variable $r-p/q=R\sqrt \eps$  
the drift and the variance be 
$$
b_{RR}(\theta,R)=F(\theta,R), \qquad 
\sigma^2_{RR}(\theta,R)=(R\,p/q)^2
\sum_{k=0}^{q-1}v^2(\theta+kR,R),
$$
where $F$ is some function to be defined in 
(\ref{eq:RR-drift-variance}).
Consider the Real Rational case assuming that 
$$
|r-p/q|\leq K_1\eps^{1/2} \qquad \Longleftrightarrow \qquad 
|R |\le K_1 
$$ 
that is, that $r$ is close to the ``pendulum'' domain. In 
this case, we study the process $(\theta_{qn},H_n)$ with 
$H_n:=H^{p/q}(\theta_{qn},R_{qn})$, where $H^{p/q}(\theta,R)$ is 
an approximate first integral of the deterministic system 
(\ref{eq:perfect-first-int-prelim}). In the rescaled variables 
it has the form 
$$
H^{p/q}(\theta,R)=\frac{R^2}{2}-V^{p/q}(\theta),
$$
where 
$$
V^{p/q}(\theta)=\int_0^\theta \E v_{p,q}(s,p/q)\,ds
$$
for a properly defined averaged potential 
(see Thm. \ref{thm:normal-form}, (\ref{NFnear})).
In Lemma \ref{lemmaexpectation-RR} we prove that, 
$H_n-H_0$ converges weakly to a diffusion process 
$R_t$ with $t=\eps^2n$. 

Notice that the limiting process does not take place on a line. 
In this case it takes place on a graph, similarly as in \cite{FW}. 
More precisely, consider the level sets of the function 
$H^{p/q}(\theta,R)$. The critical points of the potential 
$V^{p/q}(\theta)$ give rise to critical points of the associated 
Hamiltonian system. Moreover, if the critical point is a local minimum 
of $V$, then it corresponds to a focus of the Hamiltonian system, 
while if it is a local maximum of $V^{p/q}$, then it corresponds to 
a saddle of the Hamiltonian system. Now, if for every value 
$H\in \mathbb{R}$ we identify all the points $(\theta,R)$ in the same 
connected component of the curve $\{H^{p/q}(\theta,R)=H\}$, 
we obtain a graph $\Gamma$ (see Figure \ref{fig:potential-graph} 
for an example). The interior vertices of this graph represent 
the saddle points of the underlying Hamiltonian system jointly with 
their separatrices, while the exterior vertices represent the focuses 
of the underlying Hamiltonian system. Finally, the edges of the graph 
represent the domains that have the separatrices as boundaries. 
The process $H_n$ takes places on this graph, and so it is 
a diffusion process on a graph.

\begin{figure}[h]
  \begin{center} 
  \includegraphics[width=7.8cm]{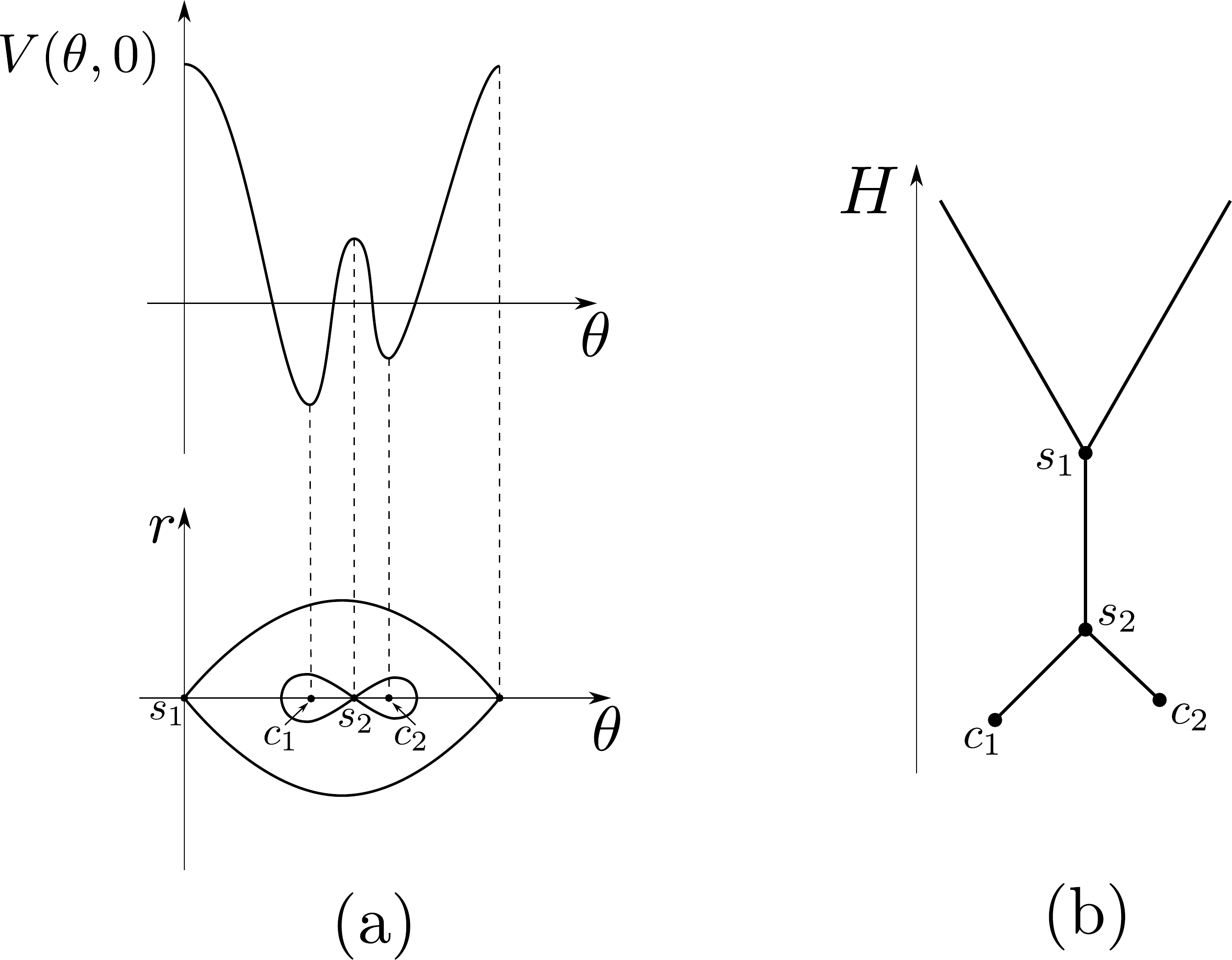}
  \end{center}
  \caption{(a) A potential and the phase portrait of its corresponding Hamiltonian system. (b) The associated graph $\Gamma$.}
  \label{fig:potential-graph}
\end{figure}

\subsubsection{A transition zone}
\label{sec:TZ-prelim}
Finally, in Lemma \ref{lemmaexpectation-TZ} we deal with 
the Transition Zones of Type I and Type II, that is the zones 
in the Real Rational strips such that 
$K_1\leq|R|\leq K_2\eps^{-1/3}$ and 
$K_2\eps^{-1/3} \leq|R|\leq \gamma\eps^{-1/2}$. 
In these strips we study the process 
$(\theta_{nq},H_{nq})=(\theta_{nq},H(\theta_{qn},R_{qn}))$. 
In this regime we fix small $\rho>0$ and subdivide each zone 
in sub-strips 
$$
I_\rho(R_0)=\{H\in\mathbb{R}\,:\,|H-H_0|\leq
|R_0|\,\eps^{1/2-\rho}\}.
$$ 
We prove that, inside each of one these sub-strips, 
as $\eps\to0$ the process $H_n-H_0$ converges weakly to 
a diffusion process $R_t$ with $t=\eps^2n$, zero drift and 
the variance:
$$
\sigma^2_{TZ}(\theta,R)=|R|^2\sum_{k=0}^{q-1}
v^2(\theta+kR,R).
$$

\subsection{From the local diffusion in the rational 
strips to the global diffusion on the line}
\label{sec:IR-cyl-to-line}

In this section we resolve the following problem. In order to 
combine all the previous results, which characterise the local 
behaviour of the process inside of infinitesimally small strips, 
to determine the global behaviour of the process in a 
$\mathcal{O}(1)-$strip. First, we prove that the Imaginary 
Rational and Real Rational strips cover a negligibly small part of 
any $\mathcal{O}(1)-$strip (see Section \ref{sec:measure-IR-RR}). 
Then, one can argue that the process is determined by 
the process in the Totally Irrational strips.

Notice both the drift $b_{IR}(\theta,r)$ and the variance 
$b_{IR}(\theta,r)$ at any Imaginary Rational strip is given 
by  $\theta$-dependent functions. Our main result
(Theorem \ref{maintheorem}), however, is a diffusion 
process on a line. To prove that this dependence does 
not enter into the global diffusion process we show that 
the process spends infinitesimal amount of time inside
of those strips as follows. 

\begin{lem} Let $(\theta_k,r_k)=f^k_{\Omega_k}(\theta_0,r_0),
k\ge 1$ be a random orbit defined by (\ref{eq:random-seq})
for some random sequence $\{\om_k\}_{k\in \Z_+}$. 
Let $n\le \eps^{-2}$ and 
$$
T_R(n)=\#\{0\le k\le n: r_k \text{ belongs to either }\qquad 
\qquad \qquad \qquad \qquad \qquad 
$$
$$
\text{an Imaginary Rational or a Real Rational strip}
\}. 
$$
Then for any $\rho>0$ and $\eps>0$ small enough
\[
\mathbb P\{ T_R(n)\ge \rho n\}\le \rho. 
\]
\end{lem}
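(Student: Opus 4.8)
The plan is to bound the time spent in rational strips by a direct measure-theoretic argument combined with a recurrence estimate. First I would invoke the estimate (stated as Lemma~\ref{lem:im-rational} in the excerpt, to be proved in Section~\ref{sec:measure-IR-RR}) that the union of Imaginary Rational and Real Rational strips occupies only an $\mathcal O(\eps^\rho)$-fraction of any $\mathcal O(1)$-piece of the cylinder: indeed there are only finitely many Real Rational strips, each of width $\eps^\beta$, and the Imaginary Rational strips, being $\eps^\beta$-neighbourhoods of rationals $p/q$ with $d<|q|<\eps^{-b}$, have total measure $\lesssim \eps^\beta \cdot \#\{\text{such rationals in an }\mathcal O(1)\text{-window}\}\lesssim \eps^\beta \cdot \eps^{-2b} = \eps^{\beta-2b}$, which is $\mathcal O(\eps^{\rho'})$ for some $\rho'>0$ by the choice $0<\beta-2b$. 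Call $\mathcal B\subset\R$ this ``bad set'' of $r$-values and write $\mu_\eps := \mathrm{Leb}(\mathcal B\cap J)$ for $J$ the relevant $\mathcal O(1)$-interval the orbit stays in; so $\mu_\eps\to 0$.

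Next I would control the $r$-dynamics as a near-random-walk in order to estimate how often the orbit visits $\mathcal B$. Over a block of $L = \lfloor \eps^{-2+2\kappa}\rfloor$ steps (for a small $\kappa>0$) the increment $r_{k+L}-r_k$ is, by \eqref{mapthetanrn}, a sum of $L$ terms each $\mathcal O(\eps)$; using [H0] the ``expected'' part and the martingale part both have size $\mathcal O(\eps\sqrt L)=\mathcal O(\eps^{\kappa})$ in an $L^2$ sense, so the $r$-coordinate moves by only $\mathcal O(\eps^{\kappa})$ over such a block but — this is the key point — by the Totally Irrational / diffusion analysis already carried out (Lemmas~\ref{lemmaexpectation} and \ref{lemmaexpectation-TZ}) the one-dimensional occupation measure of $r_k$ over the whole trajectory of length $n\le\eps^{-2}$, when pushed forward and normalized by $1/n$, converges weakly to the occupation measure of the limiting diffusion $R_t$, $t\in[0,s]$, which has a bounded density. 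Concretely, I would prove the occupation bound
\[
\E\,\#\{0\le k\le n:\ r_k\in\mathcal B\}\ \le\ C\, n\,\bigl(\mu_\eps + \eps^{\kappa}\bigr)
\]
by a Chebyshev/second-moment argument: partition time into $n/L$ blocks, note that within a block $r_k$ stays in an interval of length $\mathcal O(\eps^\kappa)$, and the probability that this interval meets $\mathcal B$ is $\mathcal O(\mu_\eps/\eps^\kappa + \eps^{\kappa})$ times the (bounded) density of the limiting occupation measure, so the expected number of bad blocks is $\mathcal O\bigl((n/L)(\mu_\eps/\eps^\kappa+\eps^\kappa)\bigr)$ and hence the expected number of bad steps is $\mathcal O\bigl(n(\mu_\eps/\eps^\kappa+\eps^\kappa)\bigr)$. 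Choosing $\kappa$ small enough that $\rho':=\min(\rho'-\kappa,\kappa)>0$ (recall $\mu_\eps=\mathcal O(\eps^{\rho'})$ with $\beta-2b>0$ giving plenty of room) makes the right-hand side $o(n)$; say it is $\le \tfrac{\rho^2}{2}\,n$ for $\eps$ small.

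Finally, Markov's inequality closes the argument: since $T_R(n) = \#\{0\le k\le n:\ r_k\in\mathcal B\}$ is a nonnegative random variable with $\E\,T_R(n)\le \tfrac{\rho^2}{2}n$, we get $\Prob\{T_R(n)\ge\rho n\}\le \E\,T_R(n)/(\rho n)\le \rho/2\le\rho$ for $\eps$ small enough, as claimed. The main obstacle, I expect, is justifying the ``bounded occupation density'' input rigorously \emph{before} Theorem~\ref{maintheorem} is proved — one cannot circularly invoke the limiting diffusion. The clean way around this is to replace it by an a~priori estimate: show that for any interval $I^j_\beta$ of width $\eps^\beta$, the expected number of steps $k\le n$ with $r_k\in I^j_\beta$ is $\mathcal O(n\,\eps^{\beta})$ (up to logarithmic factors), which follows from the fact that to accumulate many visits to a width-$\eps^\beta$ strip the process must repeatedly re-enter it, and each excursion away-and-back costs at least $\sim\eps^{2\beta-2}$ steps on average by the diffusive scaling $|r_k-r_0|^2\sim \eps^2 k$ established in the Totally Irrational analysis (Lemma~\ref{lemmaexpectation}); summing $\mathcal O(\eps^{-\rho'})$ bad strips then gives $\E\,T_R(n)=\mathcal O(n\,\eps^{\beta-\rho'}\cdot\text{polylog})=o(n)$, and Markov finishes as before. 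This avoids any circularity and uses only results already available in the strip-by-strip analysis.
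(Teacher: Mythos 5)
Your high‑level template (bound the Lebesgue measure of the bad set using Lemma~\ref{lem:im-rational}, argue the process has bounded occupation density on the $r$-line, then apply Markov's inequality to $T_R(n)$) is in the same spirit as the paper's argument, but the actual route the paper takes is different, and there is one substantive gap in what you wrote.

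The paper does not try to establish an a~priori occupation‑density bound by a block/excursion second‑moment argument. Instead it introduces auxiliary comparison diffusions: in the Imaginary Rational case it replaces the $\theta$-dependent drift $b_{IR}(\theta,r)$ and variance $\sigma^2_{IR}(\theta,r)$ by their minima over $\theta$, yielding a genuine one‑dimensional diffusion that by construction spends \emph{more} time in any $I_\beta$ than the original process; it then invokes the classical fact that the local time of a non‑degenerate one‑dimensional diffusion is comparable to Lebesgue measure, so the occupation time of a set of measure $\mathcal O(\eps^{\rho'})$ is $o(n)$. In the Real Rational case one cannot even reduce to a diffusion on the line — the process lives on the Freidlin–Wentzell graph $\Gamma$ associated with the averaged Hamiltonian $H^{p/q}$ — so the paper instead cites Freidlin–Sheu~\cite{FS} for existence of local time for diffusions on graphs. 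Your proposal never mentions the graph structure in the RR strips nor the comparison‑process device.

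The genuine gap is that you do not address degeneracy, and this is exactly where the hypotheses \textbf{[H2]} and \textbf{[H4]} enter. Your excursion‑scaling estimate (``each excursion away‑and‑back costs $\sim\eps^{2\beta-2}$ steps by the diffusive scaling $|r_k-r_0|^2\sim\eps^2 k$'') is derived from the Totally Irrational analysis and is simply not valid \emph{inside} a rational strip, where the variance could a~priori vanish; if $\sigma^2$ degenerates the process could linger for an unbounded number of steps inside a single strip and the whole occupation bound collapses — $T_R(n)$ could be of order $n$ with non‑negligible probability. The paper flags this explicitly (``the time spent in the Imaginary Rational strip could be infinite and the argument would not be valid'') and closes the hole with Lemma~\ref{lem:sg-nonvanish}: $\sigma^2(\theta,p/q)\neq 0$ for every Imaginary Rational $p/q$, a fact that uses \textbf{[H4]} for $d<|q|\le 2d$ and the trigonometric‑polynomial structure plus \textbf{[H2]} for $|q|>2d$. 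In the Real Rational case $\sigma^2_{RR}$ \emph{does} vanish at $r=p/q$, and there the paper argues non‑degeneracy through the drift $b_{RR}(\theta,r)\neq 0$. Any correct proof, including yours, must contain this non‑degeneracy step; without it the Markov/occupation estimate is not justified.

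You were right to be suspicious of the apparent circularity with Theorem~\ref{maintheorem}, and the paper handles it the way you guessed one should: the inputs are only the strip‑local lemmas (\ref{lemmaexpectation}, \ref{lemmaexpectation-IR}, \ref{lemmaexpectation-RR}, \ref{lemmaexpectation-TZ}), the measure estimate (Lemma~\ref{lem:im-rational}), and general local‑time theory for diffusions on lines and graphs — not the global weak‑convergence statement itself.
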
 

\begin{proof}
Define 
$$
b_{IR}(r):=\min_{\theta\in[0,1)}b_{IR}(\theta,r)
\quad \text{ and } \quad 
\sigma^2_{IR}(r):=\min_{\theta\in[0,1)}\sigma^2_{IR}(\theta,r)
$$
Consider the process $R^{IR}_t$ with the drift $b_{IR}(r)$ 
and the variance $\sigma^2_{IR}(r)$. By definition this process 
spends more time in $I_\beta$ that the process with the drift 
$b_{IR}(\theta,r)$ and the variance $\sigma^2_{IR}(\theta,r)$. 
Moreover, it is a diffusion process on a line. Then, using a local 
time argument, it can be seen that the time spent on 
a given domain is proportional to the size of this domain
up to a uniform constant. Hence, the time the original process 
spends in all the Imaginary Rational strips is infinitesimally 
small compared to the time it spends on the Totally Irrational 
ones. However, the time spent in the Imaginary Rational 
strip could be infinite and the argument would 
not be valid. This cannot happen, since if $r$ belongs 
to an Imaginary Rational strip one has that 
$\sigma^2(\theta,r)\neq 0$. Thus, it is enough to prove 
that for all imaginary rational $p/q$ one has 
$\sigma^2(\theta,p/q) \neq0$. Indeed, if this is true, then 
for $|r-p/q| \leq\eps^\beta$ and $\eps$ is sufficiently small, 
one has that $\sigma^2(\theta,r)\neq0$ by Lemma 
\ref{lem:sg-nonvanish}.

Finally, in the Real Rational case one can use a result from 
\cite{FS} that diffusion processes on a graph have well-defined 
local time. Thus, the time spent in all the Real Rational strips 
is infinitesimally small compared to the time spent in the Totally 
Irrational ones. Now one can have $\sigma^2_{RR}=0$, but 
it happens just when $r=p/q$, which follows directly from 
assumption \textbf{[H2]}. In this case, one can see that 
$b_{RR}(\theta,r)\neq0$, so that the process is non-degenerate 
and thus the fraction of time spend in the Real Rational strips 
is less than any ahead given fraction. 
\end{proof}

\begin{lem} \label{lem:sg-nonvanish}
$\sigma^2(\theta,p/q)\neq0$ if $p/q$ is 
any Imaginary Rational. 
\end{lem}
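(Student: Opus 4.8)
The plan is to read $\sigma^2(\theta,p/q)$ here as the $\theta$-dependent variance $\sigma^2_{IR}(\theta,p/q)=\frac1q\sum_{k=0}^{q-1}v^2(\theta+kp/q,p/q)$ from the Imaginary Rational analysis, and to prove it is nonzero for every $\theta\in\T$ as soon as $p/q$ is Imaginary Rational, i.e.\ $\gcd(p,q)=1$ and $d<|q|<\eps^{-b}$. Take $q\ge 1$ without loss of generality. Since $\gcd(p,q)=1$, the map $k\mapsto kp\bmod q$ permutes $\{0,\dots,q-1\}$, so, using that $v$ is $1$-periodic in its first slot,
\[
\sigma^2_{IR}(\theta,p/q)=\frac1q\sum_{k=0}^{q-1}v^2\!\left(\theta+\tfrac kq,\,p/q\right),
\]
i.e.\ $\sigma^2_{IR}(\theta,p/q)$ is the average of the trigonometric polynomial $g:=v^2(\cdot,p/q)$ over the $q$ equally spaced points $\theta+\frac kq$. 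I will split on the size of $|q|$ relative to $2d$.

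If $|q|>2d$: by {\bf [H3]} each $v_i(\cdot,p/q)$, hence $v(\cdot,p/q)=\tfrac12(v_1-v_{-1})(\cdot,p/q)$, is a trigonometric polynomial of degree $\le d$, so $g=v^2(\cdot,p/q)$ has degree $\le 2d$. The operator $g\mapsto\frac1q\sum_{k=0}^{q-1}g(\cdot+\tfrac kq)$ annihilates every Fourier mode whose frequency is not a multiple of $q$; since $|q|>2d$ the only surviving mode is the zeroth one, whence
\[
\sigma^2_{IR}(\theta,p/q)=\int_0^1 v^2(\theta,p/q)\,d\theta=\sigma(p/q),
\]
which is a nonzero constant by {\bf [H2]}.

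If $d<|q|\le 2d$: I invoke {\bf [H4]} at the rational $r=p/q$. Since $v=\tfrac12(v_1-v_{-1})$ we have $v_{-1}-v_1=-2v$, so $\big[v_{-1}(\theta+\tfrac kq,p/q)-v_1(\theta+\tfrac kq,p/q)\big]^2=4\,v^2(\theta+\tfrac kq,p/q)$; reindexing the sum from $k=1,\dots,q$ to $k=0,\dots,q-1$ (harmless mod $1$) and using the identity of the first paragraph,
\[
0\neq\sum_{k=1}^q\big[v_{-1}(\theta+\tfrac kq,p/q)-v_1(\theta+\tfrac kq,p/q)\big]^2=4\sum_{k=0}^{q-1}v^2(\theta+\tfrac kq,p/q)=4q\,\sigma^2_{IR}(\theta,p/q),
\]
so $\sigma^2_{IR}(\theta,p/q)\neq0$ for every $\theta$. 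Together with the previous case this proves the lemma, and since both arguments are purely number-theoretic the conclusion is uniform over all Imaginary Rational $p/q$ with no $\eps$-dependence.

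The only delicate point is the band $d<|q|\le 2d$: there the Fourier/averaging argument is insufficient, because the $q$-average of $v^2(\cdot,p/q)$ could a priori vanish at some $\theta$ even though its mean $\sigma(p/q)$ is strictly positive. This is exactly why {\bf [H4]} was imposed for $|q|\le 2d$ rather than merely $|q|\le d$; with it there is nothing more to check. (The passage from $r=p/q$ to nearby $r$ with $|r-p/q|\le\eps^\beta$, needed in the preceding lemma, then follows from continuity of $\sigma^2_{IR}(\theta,\cdot)$ together with compactness in $\theta$ once the value at $p/q$ is controlled.)
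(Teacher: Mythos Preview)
Your proof is correct and follows the same two-case split as the paper's (on whether $d<|q|\le 2d$ or $|q|>2d$, invoking {\bf [H4]} in the first case and {\bf [H2]}+{\bf [H3]} in the second). The only genuine difference is in the $|q|>2d$ case: the paper argues by contradiction via zero-counting --- if $\sigma^2_{IR}(\theta,p/q)=0$ then $v(\cdot,p/q)$ would vanish at the $q>2d$ distinct points $\theta+kp/q$, impossible for a nonzero trigonometric polynomial of degree $\le d$ --- whereas you use Fourier averaging to show that for $|q|>2d$ one has $\sigma^2_{IR}(\theta,p/q)=\int_0^1 v^2(s,p/q)\,ds$ exactly. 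Your route is slightly more informative (it identifies the value, not just positivity) and your unpacking of {\bf [H4]} as the literal identity $4q\,\sigma^2_{IR}(\theta,p/q)$ is more explicit than the paper's one-line appeal to that hypothesis; but the two arguments are equivalent in strength and structure.
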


\begin{proof}
On the one hand, if $d<|q|\leq 2d$ 
this is ensured by hypothesis \textbf{[H4]}. On the other 
hand, if $|q|>2d$ then $\sigma^2(\theta,p/q)=0$ implies:
 \begin{equation}\label{vthetaeq0}
v(\theta+kp/q,p/q)=0,\qquad\quad k=0,\cdots,q-1.
 \end{equation}
Now, since $v(\theta,p/q)$ is a trigonometric polynomial 
in $\theta$ of degree $d$, it can have at most $2d$ zeros, 
or else be identically equal to zero. The latter case cannot 
occur, since by assumption \textbf{[H2]} we know that 
$$
\int_0^1v^2(\theta,r)\neq0 \quad \text{ for all }\quad r\in\mathbb{R},
$$ 
so that $v(\theta,p/q)\not\equiv0$. Thus, $v(\theta,p/q)$ has 
at most $2d$ zeros. Consequently equation \eqref{vthetaeq0} 
cannot be satisfied for all $k=0,\cdots,q-1$, since $|q|>2d$, 
so that $\sigma^2(\theta,p/q)\neq0$. The same argument 
applies to the Transition Zones.
\end{proof}

Combining these facts one can apply the arguments from 
\cite{FW}, sect. 8 and prove that the limiting diffusion process 
has the drift $b(r)$ and the variance $\sigma^2(r)$ 
corresponding to Totally Irrational strips.

\subsection{Plan of the rest of the paper}
 
In Section \ref{sec:NormalForm} we state and prove 
the normal form theorem for the expected cylinder map 
$\E f$. Main difference with a typical normal form is that 
we need to have not only the leading term in $\eps$,
but also $\eps^2$-terms. The latter terms give information
about the drift $b(r)$ (see (\ref{eq:drift})).

In Section \ref{sec:TI-case} we analyse the Totally Irrational 
case and prove approximation for the expectation from 
Section \ref{sec:TI-prelim}. 

In Section \ref{sec:IR-case} we analyse the Imaginary
Rational case and prove an analogous formula from 
Section \ref{sec:IR-prelim}. 

In Section \ref{sec:RR-case} we analyse the Real
Rational case and prove an analogous formula from 
Section \ref{sec:RR-prelim}. 

In Section \ref{sec:TZ-case} we study the Transition Zones and 
prove an analogous formula from Section \ref{sec:TZ-prelim}. 

In Section \ref{sec:measure-IR-RR} we estimate measure 
of the complement to the Totally Irrational strips and 
the Transition Zones of type II. 

In Section \ref{sec:auxiliaries} we present 
several auxiliary lemmas used in the proof.

\section{The Normal Form Theorem}\label{sec:NormalForm}
In this section we shall prove the Normal Form Theorem, 
which will allow us to deal with the simplest possible 
deterministic system. To this end, we shall enunciate 
a technical lemma which we will need in the proof of the theorem. 
This is a simplified version (sufficient for our purposes) of Lemma 3.1 
in \cite{BKZ}. 
\begin{lem}\label{lemClnorms}
Let $g(\theta,r)\in\mathcal{C}^l\left(\mathbb{T}\times B\right)$, 
where $B\subset\mathbb{R}$. Then:
\begin{enumerate}
	\item If $l_0\leq l$ and $k\neq0$, $\|g_k(r)e^{2\pi i k \theta}\|_{\mathcal{C}^{l_0}}\leq |k|^{l_0-l}\|g\|_{\mathcal{C}^{l_0}}$.
	\item Let $g_k(r)$ be some functions that satisfy 
	$\|\partial_{ r^\alpha}g_k\|_{\mathcal{C}^0}\leq M|k|^{-\alpha-2}$ 
	for all $\alpha\leq l_0$ and some $M>0$. Then:
	$$\left\|\sum_{\substack{k\in\mathbb{Z}\\0<k\leq d}}
	g_k(r)e^{2\pi i k \theta}\right\|_{\mathcal{C}^{l_0}}\leq cM,$$
	for some constant $c$ depending on $l_0$.
\end{enumerate}
\end{lem}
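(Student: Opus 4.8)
Both parts are pure Fourier-analysis estimates, and I would prove them by integration by parts in $\theta$. The engine is the standard identity for the Fourier coefficients of a $\mathcal C^l$ function $g(\theta,r)$ that is $1$-periodic in $\theta$: writing $g_k(r)=\int_0^1 g(\theta,r)e^{-2\pi ik\theta}\,d\theta$ and integrating by parts $j$ times in $\theta$ (all boundary terms cancel by periodicity), one gets, for $k\neq 0$,
\begin{equation*}
\partial_r^{b}g_k(r)=\frac{1}{(2\pi ik)^{j}}\int_0^1 \partial_\theta^{j}\partial_r^{b}g(\theta,r)\,e^{-2\pi ik\theta}\,d\theta ,
\end{equation*}
so that $\|\partial_r^{b}g_k\|_{\mathcal C^0}\le (2\pi|k|)^{-j}\,\|g\|_{\mathcal C^{\,j+b}}$ whenever $j+b\le l$. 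This already encodes the mechanism: each integration by parts trades one power of $2\pi|k|$ for one extra $\theta$-derivative on $g$.

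For part 1 I would estimate the $\mathcal C^{l_0}$ norm of the single mode $g_k(r)e^{2\pi ik\theta}$ directly. Any mixed partial of total order at most $l_0$ has the form $\partial_\theta^{a}\partial_r^{b}\big(g_k(r)e^{2\pi ik\theta}\big)=(2\pi ik)^{a}\,\partial_r^{b}g_k(r)\,e^{2\pi ik\theta}$ with $a+b\le l_0$, so its sup norm is $(2\pi|k|)^{a}\|\partial_r^{b}g_k\|_{\mathcal C^0}$. Now apply the identity above with $j:=a+l-l_0$; this is admissible because $l_0\le l$ forces $j\ge 0$, and $j+b=(a+b)+(l-l_0)\le l$. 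The net power of $|k|$ is then $a-j=l_0-l\le 0$, and since $2\pi\ge 1$ we may replace $(2\pi|k|)^{l_0-l}$ by $|k|^{l_0-l}$; the remaining factor is bounded by $\|g\|_{\mathcal C^{\,j+b}}\le\|g\|_{\mathcal C^{l}}$. Taking the maximum over $a+b\le l_0$ gives $\|g_k(r)e^{2\pi ik\theta}\|_{\mathcal C^{l_0}}\le |k|^{l_0-l}\|g\|_{\mathcal C^{l}}$, which is the content of part 1 (the right-hand side being read as a fixed finite-order norm of $g$).

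For part 2 I would use the triangle inequality in $\mathcal C^{l_0}$ together with the per-mode bound just derived: $\big\|\sum_{0<k\le d}g_k(r)e^{2\pi ik\theta}\big\|_{\mathcal C^{l_0}}\le\sum_{0<k\le d}\max_{a+b\le l_0}(2\pi|k|)^{a}\|\partial_r^{b}g_k\|_{\mathcal C^0}$, and then substitute the hypothesis $\|\partial_r^{b}g_k\|_{\mathcal C^0}\le M|k|^{-b-2}$. Each term is then at most $M(2\pi)^{l_0}|k|^{a-b-2}$, and summing over $k$ the series $\sum_k|k|^{a-b-2}$ is controlled (it is dominated by $\sum_k|k|^{l_0-2}$, and in the intended application the sum is in any case finite, $|k|\le d$), which yields the bound $cM$ with $c$ depending only on $l_0$ (and, if one wants an $\ell^1$-type estimate without using finiteness, also on $d$). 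The only real bookkeeping point — and hence the only place where care is needed — is the choice $j=a+l-l_0$ in part 1 and the verification that it stays in the admissible range $0\le j$, $j+b\le l$; everything else is the triangle inequality and the chain rule, so I would expect the write-up to be short.
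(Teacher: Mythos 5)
The paper does not supply its own proof of this lemma; it simply notes that it is ``a simplified version (sufficient for our purposes) of Lemma 3.1 in [BKZ]'' and defers there. Your Fourier-analytic argument --- integrating by parts $j$ times in $\theta$ to trade powers of $2\pi|k|$ for $\theta$-derivatives of $g$, then estimating each mixed partial $\partial_\theta^a\partial_r^b$ with $a+b\le l_0$ directly --- is the standard proof, and your bookkeeping with the choice $j=a+l-l_0$ (together with the checks $j\ge0$ and $j+b\le l$) is carried out correctly.

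You also correctly, if somewhat quietly, flag the two glitches in the statement as printed. In part 1 the right-hand side should read $\|g\|_{\mathcal C^{l}}$, not $\|g\|_{\mathcal C^{l_0}}$: taking $g(\theta,r)=e^{2\pi ik\theta}$ with $|k|\ge2$ gives $g_k\equiv1$ and hence $\|g_k e^{2\pi ik\theta}\|_{\mathcal C^{l_0}}=\|g\|_{\mathcal C^{l_0}}$, which strictly exceeds $|k|^{l_0-l}\|g\|_{\mathcal C^{l_0}}$ whenever $l_0<l$, so the printed inequality fails and is surely a typo for $\|g\|_{\mathcal C^{l}}$ (your derived bound). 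In part 2 the worst mode contribution, at $a=l_0$, $b=0$, is of order $M(2\pi)^{l_0}|k|^{l_0-2}$, and $\sum_{0<k\le d}|k|^{l_0-2}$ is bounded independently of $d$ only when $l_0=0$; for $l_0\ge1$ the constant $c$ necessarily depends on $d$ as well (and the lemma as literally written, with $c$ depending only on $l_0$, would be false over an infinite sum). Since $d$ is a fixed finite integer throughout the paper --- the degree of the trigonometric polynomial --- this dependence is harmless in every application, but it is real, and your proposal is right to point it out.
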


Let $\mathcal{R}$ be the finite set of resonances of this map, 
namely, 
$$
\mathcal{R}=\{p/q\in\mathbb{Q}\,:\,\gcd(p,q)=1, |q|<d\}.
$$
Denote by $\mathcal{O}(\eps)$ a function whose 
$\mathcal{C}^0$-norm is bounded by $C\eps$ for some 
$C$ independent of $\eps$. 

Define 
\be \label{eq:drift}
E_2(\theta,r)=\Ev(\theta,  r)\,
\partial_\theta S_1(\theta, r)+\E w(\theta,r),\quad 
b(r)=\int E_2(\theta,r)d\theta,
\ee
where $S_1$ is a certain generating function 
defined in (\ref{eq:Genfunction}--\ref{eq:HomEq}).


\begin{thm}\label{thm:normal-form}
Consider the expected map $\Ef$ of the map \eqref{mapthetar}
\begin{equation*}
\E f
\left(\begin{array}{c}\theta\\r\end{array}\right)\longmapsto
\left(\begin{array}{c}\theta+r+\eps \Eu(\theta,r)
+\mathcal O_l(\eps^{1+a})
\\
r+\eps \Ev(\theta,r)+\eps^2 \E w(\theta,r)+
\mathcal O_l(\eps^{2+a})
\end{array}\right).
\end{equation*}
Assume that the functions $\Eu(\theta,r)$, $\Ev(\theta,r)$ and $\E w(\theta,r)$ are $\mathcal{C}^l$, $l\geq3$ and $\gamma>0$
small. Then there exists $K>0$ independent of $\eps$ 
and a canonical change of variables:
\begin{eqnarray*}
\Phi:\mathbb{T}\times\mathbb{R}&\rightarrow&\mathbb{T}\times\mathbb{R},\\
(\tilde \theta,\tilde r)&\mapsto&(\theta,r),
\end{eqnarray*}
such that:
\begin{itemize}
\item If $|\tilde r-p/q|\geq \gamma$ for all $p/q\in\mathcal{R}$, 
then:
\begin{equation}\label{NFfar}
\beal 
\Phi^{-1}\circ\Ef\circ\Phi(\tilde\theta,\tilde r)=\qquad 
\qquad \qquad \qquad \qquad \qquad \qquad \\
\left(\begin{array}{c}\tilde \theta+\tilde r+\eps \Eu(\theta,r)+\eps E_1(\theta,r)+\mathcal O_l(\eps^{1+a})
\\ 
\tilde r+\eps^2E_2(\tilde\theta,\tilde r)+\mathcal{O}_l(\eps^{2+a})\end{array}\right),
\enal 
\end{equation}
where $E_1$ and $E_2$ are some $\mathcal{C}^{l-1}$ functions.
Moreover, $E_2$ verifies $\|E_2\|_{\mathcal{C}^0}\leq K$  
and it verifies:
\begin{equation}\label{averageE2}
\beal 
b(r):=\int_0^1E_2(\tilde\theta,\tilde r)d\tilde\theta=
\qquad \qquad \qquad \qquad \qquad \qquad 
\\
\int_0^1\left[\partial_r\Ev(\tilde\theta, \tilde r)\partial_\theta S_1(\tilde\theta,\tilde r)-\partial_\theta^2S_1(\tilde\theta,\tilde r)\left(\Eu(\tilde\theta,\tilde r)-\Ev(\tilde\theta,\tilde r)\right)\right]d\tilde\theta.
\enal
\end{equation}
In particular, $b(r)$ satisfies $\|b\|_{\mathcal{C}^0}\leq K$  
and in the area-preserving case 
(when $\Eu(\theta,r)=\Ev(\theta,r)=\Ev(\theta)$), $b(r)\equiv 0$.

\item If $|\tilde r-p/q|\leq 2\gamma$ for a given 
$p/q\in\mathcal{R}$, then:
{\small \begin{equation}\label{NFnear}
\Phi^{-1}\circ\Ef\circ\Phi(\tilde\theta,\tilde r)=
\end{equation}
\begin{equation}
\left(\begin{array}{c}\tilde\theta+\tilde r+\eps\left[\Eu(\tilde\theta,p/q)-\Ev(\tilde\theta,p/q)+\Ev_{p,q}(\tilde\theta,p/q)+ E_3(\tilde\theta)\right]+\mathcal{O}_l(\eps^{1+a})\\
\tilde r+\eps\Ev_{p,q}(\tilde\theta,\tilde r)+
\eps^2 E_4(\tilde\theta,\tilde r)+\mathcal{O}_l(\eps^{2+a}) \end{array}\right),\nonumber 
\end{equation}}
where $\Ev_{p,q}$ is the $\mathcal{C}^l$ function defined as: 
\begin{equation}\label{defEvpq}
 \Ev_{p,q}(\tilde\theta,\tilde r)=\sum_{k\in \mathcal{R}_{p,q}}\Ev^k(\tilde r)e^{2\pi ik\tilde\theta},\nonumber
\end{equation}
and $E_3$ is the $\mathcal{C}^{l-1}$ function: 
\begin{equation}\label{defE3}
E_3(\tilde\theta)=-\sum_{k\not\in\mathcal{R}_{p,q}}\frac{i(\Ev^k)'(p/q)}{2\pi k}e^{2\pi i k\tilde\theta},\nonumber
\end{equation}
where $\mathcal{R}_{p,q}=\{k\in\mathbb{Z}\,:\,k\neq0,\,|k|<d,\, kp/q\in\mathbb{Z}\}.$

Moreover, $E_4$ is a $\mathcal{C}^{l-1}$ function verifying 
$\|E_4\|_{\mathcal{C}^0}\leq K$.
\end{itemize}
Also, $\Phi$ is $\mathcal{C}^2$-close to the identity. 
More precisely, there exists a constant $M$ independent 
of $\eps$ such that:
\begin{equation}\label{normPhi-Id}
\|\Phi-\textup{Id}\|_{\mathcal{C}^2}\leq M\eps.
\end{equation}
\end{thm}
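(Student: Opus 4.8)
The plan is to construct $\Phi$ as the time-one map of a suitably chosen Hamiltonian-type generating function, splitting the analysis according to whether $\tilde r$ is far from or close to a resonance $p/q\in\mathcal R$. I would work with a generating function of the form $S(\tilde\theta,\tilde r)=\tilde\theta\tilde r+\eps S_1(\tilde\theta,\tilde r)+\eps^2 S_2(\tilde\theta,\tilde r)$, defining the canonical change $\Phi$ implicitly by $r=\partial_{\tilde\theta}S$, $\theta=\partial_{\tilde r}S$; since $S$ is $\eps$-close to the generating function $\tilde\theta\tilde r$ of the identity, the implicit function theorem gives $\Phi$ well-defined and $\mathcal C^2$-close to the identity, which already yields \eqref{normPhi-Id} once we control $\|S_1\|_{\mathcal C^2}$ and $\|S_2\|_{\mathcal C^2}$. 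The first substantive step is to write out $\Phi^{-1}\circ\Ef\circ\Phi$ to order $\eps^2$, collecting the terms: at order $\eps$ in the $r$-component one gets $\Ev(\tilde\theta,\tilde r)-\bigl(S_1(\tilde\theta+\tilde r,\tilde r)-S_1(\tilde\theta,\tilde r)\bigr)$ up to the usual twist-induced shift, so the homological equation \eqref{eq:HomEq} that defines $S_1$ is $S_1(\tilde\theta+\tilde r,\tilde r)-S_1(\tilde\theta,\tilde r)=\Ev(\tilde\theta,\tilde r)-[\text{average}]$, i.e. the average of $\Ev$ over $\theta$, which is $0$ by \textbf{[H0]} in the far case. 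This is exactly where the cohomological denominators $e^{2\pi i k\tilde r}-1$ appear, and the far-from-resonance condition $|\tilde r-p/q|\ge\gamma$ for $|q|<d$ together with \textbf{[H3]} (finitely many Fourier modes $|k|\le d$) guarantees these denominators are bounded below by a constant depending only on $\gamma$ and $d$; combined with part (1) of Lemma \ref{lemClnorms} this gives $\|S_1\|_{\mathcal C^{l-1}}\le K$, hence $\|E_2\|_{\mathcal C^0}\le K$ and $\|b\|_{\mathcal C^0}\le K$.

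Next I would push to order $\eps^2$. In the $r$-component the $\eps^2$-terms are: the contribution $\E w$; the term coming from expanding $\Ev(\theta,r)$ around $(\tilde\theta,\tilde r)$ using the $\eps S_1$ corrections in $\Phi$, which produces $\partial_\theta\Ev\cdot(\text{angle correction})+\partial_r\Ev\cdot(\text{action correction})$; and the $S_2$-homological term. Choosing $S_2$ to kill the oscillatory part leaves precisely the $\theta$-average, and a computation — the one the paper defers to \eqref{averageE2} — identifies the surviving drift as $\int_0^1\bigl[\partial_r\Ev\,\partial_\theta S_1-\partial_\theta^2 S_1(\Eu-\Ev)\bigr]d\tilde\theta$ after an integration by parts in $\tilde\theta$ (using periodicity to discard boundary terms and the homological identity $\partial_\theta S_1(\tilde\theta+\tilde r,\tilde r)-\partial_\theta S_1(\tilde\theta,\tilde r)=\partial_\theta\Ev$ to re-express things). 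In the area-preserving case $\Eu=\Ev$, so the second term vanishes and the first is a total $\tilde\theta$-derivative times $\partial_r\Ev$... more carefully, one checks the integrand becomes $\partial_r(\text{something periodic})$ minus a term integrating to zero, giving $b\equiv0$; alternatively one invokes that $\Phi$ can be taken exact symplectic and $\Ef$ is then exact symplectic, forcing zero net drift on each circle. The remainders in \eqref{mapthetar} of size $\mathcal O_l(\eps^{2+a})$ are transported by $\Phi$ (which is $\mathcal O_l(\eps)$-close to identity in $\mathcal C^l$) to remainders still of size $\mathcal O_l(\eps^{2+a})$, with a loss of at most one derivative, which is why the statement asserts only $\mathcal C^{l-1}$ regularity for $E_1,E_2$ and why $l\ge 3$ suffices here (the global argument needs $l\ge 12$ for other reasons).

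For the near-resonance case $|\tilde r-p/q|\le 2\gamma$, the strategy is the same but one only solves the homological equation for the non-resonant harmonics $k\notin\mathcal R_{p,q}$ (those with $kp/q\notin\mathbb Z$), for which $|e^{2\pi i k\tilde r}-1|$ is bounded below on the whole $2\gamma$-window provided $\gamma$ is small relative to $1/d$; the resonant harmonics $k\in\mathcal R_{p,q}$ cannot be removed and survive as the averaged potential $\Ev_{p,q}(\tilde\theta,\tilde r)$ at order $\eps$. The partial generating function evaluated along the resonance produces the explicit correction $E_3$ via the first-order Taylor expansion of the denominators, and part (2) of Lemma \ref{lemClnorms} (with the decay $|k|^{-\alpha-2}$ coming from the bounded-below denominators plus the trigonometric-polynomial cutoff) controls $\|S_1\|_{\mathcal C^{l-1}}$ and hence $\|E_4\|_{\mathcal C^0}\le K$. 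The main obstacle I expect is bookkeeping at order $\eps^2$: getting the drift formula \eqref{averageE2} in exactly the stated form requires tracking the second-order terms in the composition $\Phi^{-1}\circ\Ef\circ\Phi$ carefully — in particular the cross terms where an $\eps$-size piece of $\Phi$ meets an $\eps$-size piece of $\Ef$ — and then performing the integration by parts that converts $\int \Ev\,\partial_\theta S_1$ and $\int \E w$ into the displayed combination; a sign error or a missed cross term there would give the wrong drift, so that computation is where all the care must go.
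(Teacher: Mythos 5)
Your overall strategy — construct $\Phi$ via a generating function, solve a homological equation for $S_1$ to kill the order-$\eps$ drift away from resonances, leave the resonant harmonics near $p/q\in\mathcal R$, and then track the order-$\eps^2$ terms to identify the drift $b(r)$ — matches the paper's. But there are two substantive discrepancies worth flagging.

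First, the paper uses a generating function with only a single correction term, $S(\theta,\tilde r)=\theta\tilde r+\eps S_1(\theta,\tilde r)$. There is no $S_2$: the paper simply accepts whatever $\theta$-dependent quantity $\hat B_2$ emerges at order $\eps^2$ and names it $E_2(\tilde\theta,\tilde r)$ (resp.\ $E_4$ near a resonance). Your proposal to introduce $\eps^2 S_2$ and ``kill the oscillatory part'' is a genuinely different (and stronger) route — it would produce $E_2$ independent of $\tilde\theta$ — but that is \emph{not} what the theorem asserts: \eqref{NFfar} has $E_2(\tilde\theta,\tilde r)$ with explicit $\tilde\theta$-dependence, and only the average $b(r)=\int_0^1 E_2\,d\tilde\theta$ is claimed to have the closed form \eqref{averageE2}. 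The extra normalization at order $\eps^2$ would require solving a second cohomological equation with the same small denominators, reintroducing the cutoff machinery and adding $\mathcal C^{l}$-norm bookkeeping, all to prove something not needed; in the end only the averaged drift enters Theorem \ref{maintheorem}, so the paper's minimal normalization is the efficient choice.

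Second, the homological equation should be written for $\partial_\theta S_1$, not for $S_1$ itself. With the paper's (mixed-variable) generating function convention $r=\partial_\theta S(\theta,\tilde r)$, $\tilde\theta=\partial_{\tilde r}S(\theta,\tilde r)$, the order-$\eps$ term in the $\tilde r$-component of $\Phi^{-1}\circ\Ef\circ\Phi$ is $\partial_\theta S_1(\tilde\theta,\tilde r)+\Ev(\tilde\theta,\tilde r)-\partial_\theta S_1(\tilde\theta+\tilde r,\tilde r)$, i.e.\ a first difference of $\partial_\theta S_1$ along the twist, not of $S_1$. In Fourier this turns into $2\pi i k S_1^k(\tilde r)(1-e^{2\pi i k\tilde r})+\Ev^k(\tilde r)=0$, which is exactly \eqref{eq:HomEq}; your version without the $\partial_\theta$ would yield a different (and wrong) formula for $S_1^k$. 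Relatedly, your $\Phi$ should be implicit, defined from a mixed-variable $S(\theta,\tilde r)$; as written, $r=\partial_{\tilde\theta}S(\tilde\theta,\tilde r)$, $\theta=\partial_{\tilde r}S(\tilde\theta,\tilde r)$ is just an explicit (not canonical) map.

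Finally, one idea you mention only in passing but which is load-bearing in the paper: to have a \emph{single} change $\Phi$ defined on the whole cylinder (rather than separate changes far from and near each resonance), the paper inserts a smooth cutoff $\mu_k(\tilde r)$ into the Fourier coefficients $S_1^k$ (see \eqref{defS1k} and \eqref{valuesmuk}), so that $S_1^k$ interpolates smoothly between the full solution away from $p/q$ and zero on resonant modes near $p/q$. Simply ``solving only the non-resonant harmonics near $p/q$'' does not by itself give a globally defined $\mathcal C^l$ generating function; the cutoff is what glues the two regimes and is also what makes the $\mathcal C^l$ estimates (via Lemma \ref{lemClnorms}, with $\gamma$-dependent loss) go through uniformly.

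Your observation about the area-preserving case ($\Eu=\Ev$ forces $b\equiv 0$ by exact symplecticity) is a nice alternative to the paper's direct check of \eqref{averageE2}, and your identification of the $\eps^2$ cross-terms and the integration by parts as the place where care is needed is exactly right.
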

\begin{proof}
For each $p/q\in\mathcal{R}$ we will perform a different change. 
Since the procedure is the same for all $p/q\in\mathcal{R}$, from 
now on we fix $p/q\in\mathcal{R}$. The procedure for the rest is 
analogous.

We will consider the canonical change defined implicitly by 
a given generating function 
$S(\theta,\tilde r)=\theta\tilde r+\eps S_1(\theta,\tilde r)$, that is:
\begin{equation} 
\begin{array}{rcl}\label{eq:change}
	r&=&\partial_\theta S(\theta, \tilde r)=\tilde r+\eps \partial_\theta S_1(\theta,\tilde r)\\
	\tilde \theta&=&\partial_{\tilde r} S(\theta, \tilde r)=\theta+\eps \partial_{\tilde r}S_1(\theta,\tilde r).\nonumber
\end{array}
\end{equation}
We shall start by writing explicitly the first orders of 
the $\eps$-series of $\Phi^{-1}\circ\Ef\circ\Phi$.
If $(\theta,r)=\Phi(\tilde\theta,\tilde r)$ is the change given by the generating function $S$, then one has:
\begin{eqnarray}\label{Phi}
\beal 
 \Phi(\tilde\theta,\tilde r)=\qquad \qquad \qquad \qquad
\qquad \qquad \qquad \qquad \qquad \qquad \qquad \qquad\\
\left(\begin{array}{c}\tilde\theta-\eps\partial_{\tilde r}S_1(\tilde\theta,\tilde r)+\eps^2\partial_\theta\partial_{\tilde r}S_1(\tilde\theta,\tilde r)\partial_{\tilde r}S_1(\tilde\theta,\tilde r)+\mathcal{O}(\eps^3\|\partial_\theta^2\partial_{\tilde r}S_1(\partial_{\tilde r}S_1)^2\|_{\mathcal{C}^0})\\
 \tilde r+\eps\partial_\theta S_1(\tilde\theta,\tilde r)-\eps^2\partial_\theta^2S_1(\tilde\theta,\tilde r)\partial_{\tilde r}S_1(\tilde\theta,\tilde r)+\mathcal{O}(\eps^3\|\partial_\theta^3S_1(\partial_{\tilde r}S_1)^2\|_{\mathcal{C}^0})
\end{array}\right).
\enal 
\end{eqnarray}
Its inverse is given by:
\begin{eqnarray}\label{Phiinverse}
\beal
 \Phi^{-1}(\theta,r)=\qquad \qquad \qquad \qquad
\qquad \qquad \qquad \qquad \qquad \qquad \qquad \qquad
\\
\left(\begin{array}{c}\theta+\eps\partial_{\tilde r}S_1(\theta,r)-\eps^2\partial^2_{\tilde r}S_1(\theta,r)\partial_\theta S_1(\theta,r)+\mathcal{O}(\eps^3\|\partial^3_{\tilde r}S_1(\partial_\theta S_1)^2\|_{\mathcal{C}^0})
\\
 r-\eps\partial_\theta S_1(\theta,r)+\eps^2\partial_\theta\partial_{\tilde r}S_1(\theta,r)\partial_\theta S_1(\theta,r)+\mathcal{O}(\eps^3\|\partial_\theta\partial_{\tilde r}^2S_1(\partial_\theta S_1)^2\|_{\mathcal{C}^0})
\end{array}\right).
\enal 
\end{eqnarray}
Now, first we compute $\Ef\circ\Phi(\tilde\theta,\tilde r)$. One can see that:
\begin{equation}\label{EfPhi}
\Ef\circ\Phi(\tilde\theta,\tilde r)=\left(\begin{array}{c}
\tilde\theta + r \eps A_1+\eps^2A_2+\eps^3A_3\\ r+\eps B_1+\eps^2B_2+\eps^3B_3\end{array}\right),\nonumber
\end{equation}
where:
\begin{eqnarray}\nonumber
 A_1&=&\Eu(\tilde\theta,\tilde r)-\partial_{\tilde r}S_1(\tilde\theta,\tilde r)+\partial_\theta S_1(\tilde\theta,\tilde r)\label{defA1}\\
 A_2&=&-\partial_\theta\Eu(\tilde\theta,\tilde r)\partial_{\tilde r}S_1(\tilde\theta,\tilde r)+\partial_r\Eu(\tilde\theta,\tilde r)\partial_\theta S_1(\tilde\theta,\tilde r)+\partial_\theta\partial_{\tilde r}S_1(\tilde\theta,\tilde r)\partial_{\tilde r}S_1(\tilde\theta,\tilde r)\nonumber\\
 && -\partial_\theta^2S_1(\tilde\theta,\tilde r)\partial_{\tilde r}S_1(\tilde\theta,\tilde r),\label{defA2}\nonumber\\
 A_3&=&\mathcal{O}(\|\partial_\theta^2\partial_{\tilde r}S_1(\partial_{\tilde r} S_1)^2\|_{\mathcal{C}^0})+\mathcal{O}(\|\partial_\theta^3 S_1(\partial_{\tilde r} S_1)^2\|_{\mathcal{C}^0})\nonumber\\
 &&+\mathcal{O}(\|\Eu\|_{\mathcal{C}^1}\|\partial_\theta S_1\|_{\mathcal{C}^1}\|\partial_{\tilde r}S_1\|_{\mathcal{C}^0}) +\mathcal{O}(\|\Eu\|_{\mathcal{C}^2}(\|\partial_\theta S_1\|_{\mathcal{C}^0}+\|\partial_{\tilde r}S_1\|_{\mathcal{C}^0})^2),\nonumber
\end{eqnarray}
and:
\begin{eqnarray}
 B_1&=&\Ev(\tilde\theta,\tilde r)+\partial_\theta S_1(\tilde\theta,\tilde r),\label{defB1} \nonumber\\
 B_2&=&-\partial_\theta\Ev(\tilde\theta,\tilde r)\partial_{\tilde r}S_1(\tilde\theta,\tilde r)+\partial_r\Ev(\tilde\theta,\tilde r)\partial_\theta S_1(\tilde\theta,\tilde r)\nonumber\\&&-\partial^2_\theta S_1(\tilde\theta,\tilde r)\partial_{\tilde r}S_1(\tilde\theta,\tilde r),\label{defB2}
\\
 B_3&=&\mathcal{O}(\|\partial_\theta^3S_1(\partial_{\tilde r}S_1)^2\|_{\mathcal{C}^0})+\mathcal{O}(\|\Ev\|_{\mathcal{C}^1}\|\partial_\theta S_1\|_{\mathcal{C}^1}\|\partial_{\tilde r}S_1\|_{\mathcal{C}^0})\nonumber\\
 &&+\mathcal{O}(\|\Ev\|_{\mathcal{C}^2}(\|\partial_\theta S_1\|_{\mathcal{C}^0}+\|\partial_{\tilde r}S_1\|_{\mathcal{C}^0})^2).\label{defB3}\nonumber
\end{eqnarray}
Then, using \eqref{Phiinverse} one can see that:
\begin{equation}\label{PhiinverseEfPhi}
\Phi^{-1}\circ\Ef\circ\Phi(\tilde\theta,\tilde r)=\left(\begin{array}{c}\tilde \theta+r+\eps\hat A_1+\eps^2\hat A_2\\r+\eps\hat B_1+\eps^2\hat B_2+\eps^3\hat B_3\end{array}\right),
\end{equation}
where:
\begin{eqnarray}
 \hat A_1&=&A_1+\partial_{\tilde r}S_1(\tilde\theta+\tilde r,\tilde r),\label{defA1hat}\nonumber\\
 \hat A_2&=&A_2+\eps A_3+\mathcal{O}(\|\partial_\theta\partial_{\tilde r}S_1A_1\|_{\mathcal{C}^0})+\mathcal{O}(\|\partial_{\tilde r}^2S_1B_1\|_{\mathcal{C}^0})\nonumber\\
 &&+\mathcal{O}(\|\partial_{\tilde r}^2S_1\partial_\theta S_1\|_{\mathcal{C}^0}),\label{defA2hat}\nonumber
\end{eqnarray}
and:
\begin{eqnarray} \nonumber
 \hat B_1&=&B_1-\partial_\theta S_1(\tilde \theta+\tilde r,\tilde r)\label{defB1hat}\\
 \hat B_2&=&B_2-\partial_\theta^2S_1(\tilde\theta+\tilde r,\tilde r)A_1-\partial_{\tilde r}\partial_\theta S_1(\tilde\theta+\tilde r,\tilde r)B_1\nonumber\\
 &&+\partial_\theta\partial_{\tilde r}S_1(\tilde \theta+\tilde r,\tilde r)\partial_\theta S_1(\tilde\theta+\tilde r,\tilde r),\label{defB2hat}
\\
 \hat B_3&=&B_3+\mathcal{O}(\|\partial_\theta\partial_{\tilde r}^2S_1(\partial_\theta S_1)^2\|_{\mathcal{C}^0})\nonumber \\ &&+\mathcal{O}(\|\partial_\theta^2 S_1 (A_2+\eps A_3)\|_{\mathcal{C}^0}+\|\partial_\theta\partial_{\tilde r} S_1 B_2\|_{\mathcal{C}^0})\nonumber\\
 &&+\mathcal{O}(\|\partial_\theta^3S_1 A_1^2\|_{\mathcal{C}^0}+\|\partial_\theta^2\partial_{\tilde r} S_1A_1B_1\|_{\mathcal{C}^0}+\|\partial_\theta\partial_{\tilde r}^2S_1 B_1^2\|_{\mathcal{C}^0})\nonumber\\
 &&+\mathcal{O}(\|\partial_\theta^2\partial_{\tilde r}S_1A_1\partial_\theta S_1\|_{\mathcal{C}^0}+\|\partial_\theta\partial_{\tilde r}^2 S_1B_1\partial_\theta S_1\|_{\mathcal{C}^0})\nonumber\\
 &&+\mathcal{O}(\|\partial_\theta\partial_{\tilde r}S_1\partial_\theta^2S_1 A_1\|_{\mathcal{C}^0}+\|(\partial_\theta\partial_{\tilde r}S_1)^2B_1\|_{\mathcal{C}^0})\label{defB3hat}.\nonumber
\end{eqnarray}

%
%
%

Now that we know the terms of order $\eps$ and $\eps^2$ of $\Phi^{-1}\circ\Ef\circ\Phi$, we shall proceed to find a suitable $S_1(\theta,\tilde r)$ such that these terms are as simple as possible. More precisely, we want to simplify the second component of \eqref{PhiinverseEfPhi}. Ideally we would like that $\hat B_1=0$. 
Namely, we want to solve the following equation whenever it is possible:
$$\partial_{\theta}S_1(\tilde\theta,\tilde r)+\Ev(\tilde\theta,\tilde r)-\partial_{\theta}S_1(\tilde\theta+\tilde r,\tilde r)=0.$$
One can easily find a solution of this equation by solving the corresponding equation for the Fourier coefficients. To that aim, we write $S_1$ and $\Ev$ 
in their Fourier series:
\begin{equation}
\label{eq:Genfunction}
S_1(\theta,\tilde r)=
\sum_{k\in\mathbb{Z}}S_1^k(\tilde r)e^{2\pi ik\theta},
\end{equation}
$$\Ev(\theta,r)=\sum_{\substack{k\in\mathbb{Z}\\0<|k|\leq d}}\Ev^k(r)e^{2\pi ik\theta}.$$
It is obvious that for $k>d$ and $k=0$ we can take $S_1^k(\tilde r)=0$. For $0<k\leq d$ we obtain the following homological equation for $S_1^k(\tilde r)$:
\begin{equation} \label{eq:HomEq}
2\pi ikS_1^k(\tilde r)\left(1-e^{2\pi ik\tilde r}\right)+\Ev^k(r)=0.
\end{equation}
Clearly, this equation cannot be solved if $e^{2\pi ik\tilde r}=1$, i.e. if $k\tilde r\in\mathbb{Z}$. We note that there exists a constant $L$, independent of $\eps$, $L<d^{-1}$, such that for all $0<k\leq d$, if $\tilde r\neq p/q$ satisfies:
$$0<|\tilde r-p/q|\leq L$$
then $k\tilde r\not \in\mathbb{Z}$. Thus, restricting ourselves to the domain $|\tilde r-p/q|\leq L$, we have that if $kp/q\not\in\mathbb{Z}$ equation \eqref{eq:HomEq} always has a solution, and if $kp/q\in\mathbb{Z}$ this equation has a solution except at $\tilde r=p/q$. Moreover, in the case that the solution exists, it is equal to:
$$S_1^k(\tilde r)=\frac{i\Ev^k(r)}{2\pi k\left(1-e^{2\pi ik\tilde r}\right)}.$$
We will modify this solution slightly to make it well defined also at $\tilde r=p/q$. To this end, let us consider a $\mathcal{C}^\infty$ function 
$\mu(x)$ such that:
\begin{equation*}
\mu(x)=\left\{\begin{array}{rcl}
1 &\textrm{ if }& |x|\leq1,\\
0 &\textrm{ if }& |x|\geq2,
\end{array}\right.
\end{equation*}
and $0<\mu(x)<1$ if $x\in(1,2)$. Then we define:
$$\mu_k(\tilde r)=\mu\left(\frac{1-e^{2\pi ik\tilde r}}{2\pi k
\gamma
}\right),$$
and take:
\begin{equation}\label{defS1k}
 S_1^k(\tilde r)=\frac{i\Ev^{k}(r)(1-\mu_k(\tilde r))}{2\pi k(1-e^{2\pi ik\tilde r})}.\end{equation}
We note that this function is well defined since the numerator is identically zero in a neighbourhood of $\tilde r=p/q$, the unique zero 
of the denominator (if it is a zero indeed, that is, if 
$k\in\mathcal{R}_{p,q}$). More precisely, we claim that:
\begin{equation}\label{valuesmuk}
 \mu_k(\tilde r)=\left\{\begin{array}{ccl}1&\textrm{ if }& k\in\mathcal{R}_{p,q} \textrm{ and }|\tilde r-p/q|\leq 
\gamma
/2,\\
0&\textrm{ if }& k\in\mathcal{R}_{p,q} \textrm{ and }|\tilde r-p/q|\geq 3\gamma
,
\\
0&\textrm{ if }&k\not\in\mathcal{R}_{p,q}.
\end{array}\right.
\end{equation}
Indeed if $k\in\mathcal{R}_{p,q}$ there exists a constant $M$ 
independent of $\tilde r$ and $\eps$ such that:
$$
\frac{1}{
\gamma
}|\tilde r-p/q|(1-M|\tilde r-p/q|)
\leq
\left|\frac{1-e^{2\pi ik\tilde r}}{2\pi k
\gamma
}\right|\leq\frac{1}{
\gamma
}|\tilde r-p/q|(1+M|\tilde r-p/q|).
$$
Then, on the one hand, if $k\in\mathcal{R}_{p,q}$ and $|\tilde r-p/q|\leq \gamma
/2$ we have:
$$
\left|\frac{1-e^{2\pi ik\tilde r}}{2\pi k
\gamma
}\right|\leq\frac{1}{2}+\frac{M}{4} \gamma
<1,$$
for $\eps$ sufficiently small, and thus $\mu_k(\tilde r)=1$. 
On the other hand, if $|\tilde r-p/q|\geq 3\gamma
$ then:
$$
\left|\frac{1-e^{2\pi ik\tilde r}}{2\pi k\gamma
}\right|\geq 3-9M
\gamma
>2,$$
for $\eps$ sufficiently small, and thus $\mu_k(\tilde r)=0$. 
Finally, if $k\not\in\mathcal{R}_{p,q}$ then:
$$\left|\frac{1-e^{2\pi ik\tilde r}}{2\pi k \gamma
}\right|\geq M\gamma
>2$$
for $\gamma$ sufficiently small and then we also have 
$\mu_k(\tilde r)=0$.

Now we proceed to check that the first order terms of \eqref{PhiinverseEfPhi} take the form \eqref{NFfar} if 
$|\tilde r-p/q|\geq 3\gamma$
and \eqref{NFnear} if 
$|\tilde r-p/q|\leq \gamma$.
On the one hand, by definitions \eqref{defS1k} of 
the coefficients $S_1^k(\tilde r)$ and \eqref{defB1hat} 
of $\hat B_1$, we have:
$$
\hat B_1=\sum_{0<|k|\leq d}
\mu_k(\tilde r)\Ev^k(\tilde r)e^{2\pi i k\tilde\theta}.$$
Then, recalling \eqref{valuesmuk} we obtain:
\begin{equation}\label{valuesB1hat}
 \hat B_1=\left\{\begin{array}{lcl}0&\quad
 \textrm{ if }&|\tilde r-p/q|\geq \gamma
 \\ 
 \displaystyle\sum_{k\in\mathcal{R}_{p,q}}
 \Ev^k(\tilde r)e^{2\pi i k\tilde\theta}= 
 \Ev_{p,q}(\tilde\theta,\tilde r)&\quad
 \textrm{ if }&|\tilde r-p/q|\leq \gamma/2
 .\end{array}\right. 
\end{equation}
where we have used the definition \eqref{defEvpq} of 
$\Ev_{p,q}(\tilde\theta,\tilde r)$. On the other hand, from 
the definition \eqref{defS1k} of $S_1^k(\tilde r)$ one can check that:
\begin{eqnarray*}
 &&-\partial_{\tilde r}S_1(\tilde\theta,\tilde r)+
 \partial_{\tilde r}S_1(\tilde\theta+\tilde r,\tilde r)\\
 &&=-\partial_\theta S_1(\tilde\theta+\tilde r,\tilde r)-\sum_{0<|k|<d}\frac{i(\Ev^k)'(\tilde r)(1-\mu_k(\tilde r))+
 i\Ev^k(\tilde r)\mu_k'(\tilde r)}{2\pi k}e^{2\pi i k\tilde\theta}.
\end{eqnarray*}
Recalling definitions \eqref{defA1hat} of $\hat A_1$ and 
\eqref{defB1hat} of $\hat B_1$, this implies that:
\begin{eqnarray}\label{hatA1rewrite}
\hat A_1=\Eu(\tilde\theta,\tilde r)-\Ev(\tilde\theta,\tilde r)+\hat B_1
\qquad \qquad \qquad \qquad \qquad \\
-\sum_{0<|k|<d}\frac{i(\Ev^k)'(\tilde r)(1-\mu_k(\tilde r))+i\Ev^k(\tilde r)\mu_k'(\tilde r)}{2\pi k}e^{2\pi i k\tilde\theta}.
\end{eqnarray}
Then we use \eqref{valuesB1hat} and \eqref{valuesmuk} again, noting that $\mu'_k(\tilde r)=0$ in both regions $|\tilde r-p/q|\geq 3\gamma$
and $|\tilde r-p/q|\leq \gamma/2$.
Moreover, we note that for $|\tilde r-p/q|\leq \gamma/2$.
$$
\Ev_{p,q}(\tilde\theta,\tilde r)=
\Ev_{p,q}(\tilde\theta,p/q) +\mathcal{O}(\gamma
),$$
$$(\Ev^k)'(\tilde r)=(\Ev^k)'(p/q)+\mathcal{O}(
\gamma
).$$
Define 
\begin{equation}\label{defE1}
E_1(\tilde\theta,\tilde r)=-\sum_{0<|k|<d}
\frac{i(\Ev^k)'(\tilde r)}{2\pi k}e^{2\pi i k\tilde\theta}.
\end{equation}
Then  the same holds for $\Eu(\tilde\theta,\tilde r)$ and $\Ev(\tilde\theta,\tilde r)$: recalling definition 
\eqref{defE3} of $E_3$, equation \eqref{hatA1rewrite} yields:
{\small 
\begin{equation}\label{valuesA1hat}
 \hat A_1=\left\{\begin{array}{lcl}\Eu(\tilde\theta,\tilde r)-\Ev(\tilde\theta,\tilde r)+E_1(\tilde\theta,\tilde r)&\,\textrm{ if }&|\tilde r-p/q|\geq 3\gamma
 ,\\ \Delta \E(\tilde\theta,p/q) +\Ev_{p,q}(\tilde\theta)+E_3(\tilde\theta)+\mathcal{O}(\eps^{1/6}) &\,\textrm{ if }&|\tilde r-p/q|\leq \gamma/2
 ,\end{array}\right.
\end{equation}}
where $\Eu(\tilde\theta,p/q)-\Ev(\tilde\theta,p/q)=\Delta \E(\tilde\theta,p/q)$. 
In conclusion, by \eqref{valuesA1hat} and \eqref{valuesB1hat} 
we obtain that the first order terms of \eqref{Phiinverse} coincide 
with the first order terms of \eqref{NFfar} and \eqref{NFnear} in 
each region.

For the $\eps^2-$terms we rename $\hat B_2$ in the following way:
\begin{eqnarray} 
E_2(\tilde\theta,\tilde r)&=&\hat B_2|_{\{|r-p/q|\geq3\gamma
\}},\label{defE2} \\
E_4(\tilde\theta,\tilde r)&=&\hat B_2|_{\{|r-p/q|\leq\gamma/2
\}}.\label{defE4}
\end{eqnarray}
Now we shall see that $E_2$ verifies \eqref{averageE2}. To avoid 
long notation, in the following we do not write explicitly that 
expressions $A_i$, $B_i$, $\hat A_i$ and $\hat B_i$ are restricted 
to the region $\{|r-p/q|\geq 3 \gamma
\}$.  We note that since in 
this region we have $\hat B_1=0$ by \eqref{valuesB1hat}, recalling 
the definition \eqref{defB1hat} of $\hat B_1$ it is clear that 
$B_1=\partial_\theta S_1(\tilde\theta+\tilde r,\tilde r)$. Hence, from 
definition \eqref{defB2hat} of $\hat B_2$ it is straightforward to see that:
\begin{equation}\label{hatB2simple}
\hat B_2=B_2-\partial_\theta^2S_1(\tilde\theta+\tilde r,\tilde r)A_1.
\end{equation}
Now we recall that $\hat A_1=A_1+\partial_{\tilde r}S_1(\tilde\theta+\tilde r,\tilde r)$. Then, using \eqref{valuesA1hat}, for $|\tilde r-p/q|\geq 3\eps^{1/6}$ we obtain straightforwardly: 
\begin{equation}\label{hatA1simple}
A_1=\Eu(\tilde\theta,\tilde r)-\Eu(\tilde\theta,\tilde r)+E_1(\tilde\theta,\tilde r)-\partial_{\tilde r}S_1(\tilde\theta+\tilde r,\tilde r).
\end{equation}
Using this and the definition \eqref{defB2} of $B_2$ in expression \eqref{hatB2simple} one obtains: 
\begin{eqnarray}\label{B2hatequality}
\hat B_2|_{\{|r-p/q|\geq3\eps^{1/6}\}}&=&-\partial_\theta\Ev(\tilde\theta,\tilde r)\partial_{\tilde r}S_1(\tilde\theta,\tilde r)+\partial_r\Ev(\tilde\theta,\tilde r)\partial_\theta S_1(\tilde\theta,\tilde r) \\
&&-\partial^2_\theta S_1(\tilde\theta,\tilde r)\partial_{\tilde r}S_1(\tilde\theta,\tilde r)\nonumber \\
&&-\partial_\theta^2S_1(\tilde\theta+\tilde r,\tilde r)[\Eu(\tilde\theta,\tilde r)-\Eu(\tilde\theta,\tilde r)\nonumber\\
&&+E_1(\tilde\theta,\tilde r)-\partial_{\tilde r}S_1(\tilde\theta+\tilde r,\tilde r)].\nonumber 
\end{eqnarray}
Show that this expression has the claimed average \eqref{averageE2}. 
On the one hand, it is clear that $\partial^2_\theta S_1(\tilde\theta,\tilde r)\partial_{\tilde r}S_1(\tilde\theta,\tilde r)$ and $\partial^2_\theta S_1(\tilde\theta+\tilde r,\tilde r)\partial_{\tilde r}S_1(\tilde\theta+\tilde r,\tilde r)$ have the same average, so:
$$\int_0^1-\partial^2_\theta S_1(\tilde\theta,\tilde r)\partial_{\tilde r}S_1(\tilde\theta,\tilde r)+\partial_\theta^2S_1(\tilde\theta+\tilde r,\tilde r)\partial_{\tilde r}S_1(\tilde\theta+\tilde r,\tilde r)d\tilde\theta=0.$$
On the other hand, writing explicitly the zeroth Fourier coefficient of the product, one can see that:
$$\int_0^1-\partial_\theta\Ev(\tilde\theta,\tilde r)\partial_{\tilde r}S_1(\tilde\theta,\tilde r)-\partial_\theta^2S_1(\tilde\theta+\tilde r,\tilde r)E_1(\tilde\theta,\tilde r)d\tilde\theta=0.$$
Thus, recalling \eqref{defE2} and using these two facts in equation \eqref{B2hatequality} we obtain:
$$\int_0^1E_2(\tilde\theta,\tilde r)d\tilde\theta=
$$
$$
\int_0^1\left[\partial_r\Ev(\tilde\theta, \tilde r)\partial_\theta S_1(\tilde\theta,\tilde r)-\partial_\theta^2S_1(\tilde\theta,\tilde r)\left(\Eu(\tilde\theta,\tilde r)-\Ev(\tilde\theta,\tilde r)\right)\right]d\tilde\theta,$$
so that \eqref{averageE2} is proved.

We note that, from the definition \eqref{defS1k} of the Fourier coefficients of $S_1$, it is clear that $S_1$ is $\mathcal{C}^l$ with respect to $r$. Since it just has a finite number of nonzero coefficients, it is analytic with respect to $\theta$. Then, from the definitions \eqref{defE2} of $E_2$ and \eqref{defE4} of $E_4$ and the expression $\eqref{defB2hat}$ of $\hat B_2$, it is clear that both $E_2$ and $E_4$ are $\mathcal{C}^{l-1}$.
Finally we shall bound the $\mathcal{C}^0$-norms of the functions $E_2$, $b(r)$ and $E_4$ and also the error terms. To that aim, first let us bound the $\mathcal{C}^l$ norms of $S_1$ and its derivatives. We will use Lemma \ref{lemClnorms} and proceed similarly as in \cite{BKZ}. We note that:
\begin{enumerate}
	\item If $\mu_k(\tilde r)\neq 1$ we have $|1-e^{2\pi ik\tilde r}|>M 
	\gamma
	|k|$, and thus:
	$$\left|\frac{1}{1-e^{2\pi ik\tilde r}}\right|<M^{-1}
		\gamma^{-1}
		|k|^{-1}.$$
	\item Then, using that $\|f\circ g\|_{\mathcal{C}^l}\leq C\|f_{|_{\textrm{Im}(g)}}\|_{\mathcal{C}^l}\left(1+\|g\|_{\mathcal{C}^l}^l\right)$, we get that:
	$$\left\|\frac{1}{1-e^{2\pi ik\tilde r}}\right\|_{\mathcal{C}^l}\leq M
	\gamma
	^{-(l+1)/5}|k|^{-l-1},$$
	for some constant $M$.
	\item Using the rule for the norm of the composition again and 
	the fact that $\|\mu\|_{\mathcal{C}^l}$ is bounded independently 
	of $\eps$, we get:
	$$\|\mu_k(\tilde r)\|_{\mathcal{C}^l}\leq M\gamma
	^{-l/5}|k|^{-l},$$
	for some constant $M$, and the same bound is obtained for $\|1-\mu_k(\tilde r)\|_{\mathcal{C}^l}$.
\end{enumerate}
Using items $2$ and $3$ above and the fact that $\|\Ev^k\|_{\mathcal{C}^l}$ are bounded, we get that:
{\small \begin{eqnarray*}
\left\|\partial_{\tilde r^\alpha}\left[\frac{1-\mu_k(\tilde r)i\Ev^k(\tilde r)}{2\pi k(1-e^{2\pi ik\tilde r})}\right]\right\|_{\mathcal{C}^0}&\leq&M_1\sum_{\alpha_1+\alpha_2=\alpha}\frac{1}{2\pi|k|}\|1-\mu_k(\tilde r)\|_{\mathcal{C}^{\alpha_1}}\left\|\frac{1}{1-e^{2\pi ik\tilde r}}\right\|_{\mathcal{C}^{\alpha_2}}\\
&\leq& M_2\gamma
^{-(\alpha+1)/5}|k|^{-\alpha-2}.
\end{eqnarray*}}
Then, by item $2$ of Lemma \ref{lemClnorms}, we obtain:
$$ \|S_1\|_{\mathcal{C}^l}\leq M\gamma
^{-(l+1)/6}.$$
One can also see that $\|\partial_{\tilde r} S_1\|_{\mathcal{C}^l}\leq M\|S_1\|_{\mathcal{C}^{l+1}}$ and $\|\partial_\theta S_1\|_{\mathcal{C}^l}\leq M\|S_1\|_{\mathcal{C}^l}$. In general, one has:
\begin{equation}\label{boundderivsS1}
 \|\partial_\theta^n\partial_{\tilde r}^m S_1\|_{\mathcal{C}^l}\leq M\gamma
 ^{-(l+m+1)/6}.
\end{equation}

Now, recalling definitions \eqref{defE2} of $E_2$ and \eqref{defE4} of $E_4$, and using either expression \eqref{B2hatequality} or simply \eqref{defB2hat}, bound \eqref{boundderivsS1} yields that there exists some $K$ such that:
$$\|E_2\|_{\mathcal{C}^0}\leq K\gamma
^{-1/2},\qquad \|E_4\|_{\mathcal{C}^0}\leq K\gamma
^{-1/2}.$$
To bound $b(r)$ we use again \eqref{boundderivsS1} and 
recall that 
$\|\Ev\|_{\mathcal{C}^0}\leq K$, 
$\|\partial_r\Ev\|_{\mathcal{C}^0}\leq K$, 
$\|\Eu\|_{\mathcal{C}^0}\leq K$. Then from 
its definition \eqref{averageE2} it is clear that:
$$
\|b\|_{\mathcal{C}^0}\leq K\gamma
.$$
Similarly, one can easily bound the error terms in 
the equation for $\tilde r$:
\begin{equation}\label{errorr}
 \eps^3\hat B_3=\mathcal{O}(\eps^{2+a}),
\end{equation}
and the error terms for the equation of $\tilde \theta$:
\begin{equation}\label{errortheta}
 \eps^2\hat A_2=\mathcal{O}(\eps^{4/3}).
\end{equation}
This finishes the proof for the normal forms \eqref{NFfar} and \eqref{NFnear} (in the latter case, we have to take into account 
the extra error term of order $\mathcal{O}(\eps^{1+a})$ caused 
by the $\gamma$
--error term in \eqref{valuesA1hat}).

To prove \eqref{normPhi-Id}, we just need to recall \eqref{Phi} and 
use \eqref{boundderivsS1}. Then one obtains:
$$\|\Phi-\textrm{Id}\|_{\mathcal{C}^2}\leq M'\eps\|S_1\|_{\mathcal{C}^3}.
$$
\end{proof}

From now on we will consider that our deterministic system is 
in the normal form, and drop tildes.

\section{Analysis of the Martingale problem in the strips
of each type}
\subsection{The Totally Irrational case}\label{sec:TI-case}

First of all we note that in this case, as in the IR case, after performing the change to normal form, the $n$-th iteration of our map can be written as:
\begin{equation}
\begin{array}{rcl}\label{eq:NRmap-n}
\theta_n&=&\displaystyle\theta_0+nr_0+\mathcal{O}(n\eps),\\
r_n&=&\displaystyle r_0+\eps\sum_{k=0}^{n-1}\omega_k[v(\theta_k,r_k)+\eps v_2(\theta_k,r_k)]+\eps^2\sum_{k=0}^{n-1}E_2(\theta_k,r_k)+\mathcal{O}(n\eps^{\textb{1+a}}),
\end{array}
\end{equation}
where $v_2(\theta,r)$ is a given function which can be written explicitly in terms of $v(\theta,r)$ and $S_1(\theta,r)$.

Recall that $I_\beta$ is a  totally irrational segment if $p/q\in I_\beta$,
then $|q|>\eps^{-b},$ where $0<2b<\beta$. 

We recall that we define $b=(\beta-\rho)/2$ for a certain $0<\rho<\beta$. In the following we shall assume that $\rho$ satisfies an extra condition, which will ensure that certain inequalities are satisfied. This inequalities involve the degree of differentiability of certain $\mathcal{C}^l$ functions. We assume that $l\geq 12$. 
Then we have that:
$\frac{l-11}{l-1}>0,\ \lim_{l\to\infty}\frac{l-11}{l-1}=1.$
Thus, there exists a constant $R>0$ such that:
\begin{equation}\label{constantR}
 \frac{l-11}{l-1}>R>0,\qquad \textrm{ for all }l\geq12.
\end{equation}
Given $\beta$, satisfying:
\begin{equation}\label{conditionbeta}
0<\beta\le 1/5,
\end{equation}
then we will take $\rho$ satisfying:
\begin{equation}\label{conditionrho}
 0<\rho<R\beta.
\end{equation}

\begin{lem}\label{lemmasigma2}
Let $g$ be a $\mathcal{C}^l$ function, $l\ge 12$.
Suppose $r^*$ satisfies the following condition if for some 
rational $p/q$ we have $|r^*-p/q|<\eps^\beta$, then $|q|>\eps^{-b}$. 
Then for any $A$ such that 
$$2\beta<A<(l-1)b-\beta, \quad  
0<\tau=\textb{1-2A}\le \min\{A-2\beta,(l-1)b-A-\beta,\beta\}
$$ 
and 
$\eps$ small enough there is 
$N\le \eps^{-A}$
such that for some $K$ independent of $\eps$ and any $\theta^*$ we have:
$$\left|N\,\int_0^1g(\theta,r^*)d\theta-\sum_{k=0}^{N-1}g(\theta^*+kr^*,r^*)\right|\leq K\eps^{\tau+\beta}.$$
\textb{In particular, one can choose any 
$0<\beta\le 1/5,\ A=7\beta/3,\ \tau=A-2\beta=\beta/3,\ b=\beta/3$. }
\end{lem}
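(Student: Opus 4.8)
The plan is to estimate the ergodic average of a trigonometric-type expansion of $g$ by decomposing it into Fourier modes in $\theta$ and exploiting that the frequency $r^*$ is far from every rational with small denominator. First I would expand $g(\theta,r^*)=\sum_{k\in\Z}g_k(r^*)e^{2\pi i k\theta}$; the zeroth mode contributes exactly $N\int_0^1 g(\theta,r^*)\,d\theta=N g_0(r^*)$, so after subtracting it we must bound $\sum_{k\ne 0}g_k(r^*)e^{2\pi i k\theta^*}\bigl(\sum_{j=0}^{N-1}e^{2\pi i k j r^*}\bigr)$. Each inner geometric sum is bounded by $\min\{N,\ \|kr^*\|^{-1}\}$ where $\|\cdot\|$ denotes distance to the nearest integer, via the standard estimate $\bigl|\sum_{j=0}^{N-1}e^{2\pi i kjr^*}\bigr|\le 1/(2\|kr^*\|)$.

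The key Diophantine input is the hypothesis: if $|r^*-p/q|<\eps^\beta$ then $|q|>\eps^{-b}$. From this I would deduce a lower bound $\|kr^*\|\ge c\eps^\beta$ (or more precisely a bound of the type $\|kr^*\|\gtrsim \eps^{?}$) for all $k$ in a suitable range $0<|k|\le \eps^{-b}$: if $\|kr^*\|$ were too small, then $r^*$ would be within $\eps^{\beta}/|k|$ of a rational with denominator $\le |k|<\eps^{-b}$, contradicting the hypothesis. Combined with item 1 of Lemma \ref{lemClnorms}, which gives $|g_k(r^*)|\le |k|^{-l}\|g\|_{\mathcal C^l}$ for the high modes, the tail $|k|>\eps^{-b}$ is summable and contributes $O(N\eps^{(l-1)b})$, which is $\le K\eps^{\tau+\beta}$ once $A<(l-1)b-\beta$; the low modes $0<|k|\le\eps^{-b}$ contribute at most $\sum_{0<|k|\le\eps^{-b}}|g_k(r^*)|\cdot c\eps^{-\beta}$, and since $\sum_k |g_k|$ converges this is $O(N^{0}\eps^{-\beta})$... wait — more carefully, this term does not carry $N$, so it is $O(\eps^{-\beta})$, which is far too large.

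Hence the genuine subtlety — and the step I expect to be the main obstacle — is that one cannot simply use the worst-case Diophantine bound mode-by-mode; one must split the sum over $k$ by dyadic blocks according to the size of $\|kr^*\|$ and use that in each block the number of $k$ with $\|kr^*\|$ in a given dyadic range is controlled (a three-distance / counting-lattice-points argument), while simultaneously using the decay $|g_k(r^*)|\le |k|^{-l}\|g\|_{\mathcal C^l}$. The right way is: for $|k|$ in a dyadic block $[2^m,2^{m+1})$ with $2^m\le \eps^{-b}$, the frequencies $kr^*\pmod 1$ are $\eps^\beta$-separated up to the Diophantine scale, so $\sum_{k\ \text{in block}}\|kr^*\|^{-1}\lesssim 2^m\log(1/\eps)+\eps^{-\beta}$; weighting by $|k|^{-l}\le 2^{-ml}$ and summing the geometric series over $m$ gives a total of order $\eps^{-\beta}\cdot(\text{something summable})$, which is still $O(\eps^{-\beta})$ and must be shown $\le K\eps^{\tau+\beta}$ — i.e. one needs $-\beta\le \tau+\beta$, impossible. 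The resolution must instead come from choosing $N\le\eps^{-A}$ small enough that $N\cdot(\text{error per step})$ is what is controlled: the cleaner route is to bound $\bigl|N g_0-\sum g\bigr|\le \sum_{k\ne0}|g_k|\min\{N,\|kr^*\|^{-1}\}$ and split at $|k|=K_0:=\eps^{-b}$; for $|k|\le K_0$ use $\min\{N,\|kr^*\|^{-1}\}\le \|kr^*\|^{-1}$ together with the refined dyadic count to get $\le C\eps^{-\beta}\log(1/\eps)\cdot\|g\|_{\mathcal C^1}$, and here one needs $-\beta+(\text{log factor})\le \tau+\beta$, so actually one needs a better separation $\|kr^*\|\gtrsim \eps^{\beta}$ only fails to suffice — the point is that $\tau+\beta\ge 2\beta> -\beta$ trivially, so in fact $O(\eps^{-\beta})$ is NOT $\le K\eps^{\tau+\beta}$ and I have the sign of the exponent backwards: $\eps^{-\beta}$ is large, $\eps^{\tau+\beta}$ is small. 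So the low-mode sum must actually be bounded by $N\cdot\eps^{\text{large}}$: one uses $\min\{N,\|kr^*\|^{-1}\}$ and the fact that $\|kr^*\|^{-1}\le \eps^{-\beta}$ together with $|g_k|$-decay to show the sum is $O(\eps^{-\beta}\|g\|_{\mathcal C^l}\sum_{k\le K_0}|k|^{-l})=O(\eps^{-\beta})$, and THEN one absorbs this into the claimed bound only because $N$ can be taken as small as desired subject to $N\le\eps^{-A}$ — no. I will therefore structure the argument as: (i) Fourier-expand and isolate $g_0$; (ii) bound the geometric sums by $\min\{N,(2\|kr^*\|)^{-1}\}$; (iii) prove the Diophantine separation $\|kr^*\|\ge \tfrac12\eps^{\beta}$ for $0<|k|\le K_0=\eps^{-b}$ from the hypothesis; (iv) bound the tail $|k|>K_0$ by $N\|g\|_{\mathcal C^l}\sum_{|k|>K_0}|k|^{-l}\le C\eps^{-A}\eps^{(l-1)b}\le C\eps^{\tau+\beta}$ using $A\le (l-1)b-\beta-\tau$; (v) bound the head $0<|k|\le K_0$ by $\|g\|_{\mathcal C^l}\sum_{0<|k|\le K_0}|k|^{-l}\cdot\tfrac12\eps^{-\beta}\le C\eps^{-\beta}$, and observe this is NOT small — so in (v) one must instead NOT sum over all $k\le K_0$ with the worst bound but rather split once more at an intermediate scale $|k|\sim\eps^{-\delta}$ and balance, using $|g_k|\le|k|^{-l}$ to kill $|k|\ge\eps^{-\delta}$ and using $\min\{N,\cdots\}\le N$ is useless while $\|kr^*\|^{-1}\le\eps^{-\beta}$ forces the compromise $\sum\le\eps^{-\beta}\eps^{\delta(l-1)}+N\eps^{\delta(l-1)}$... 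The honest summary is: the main obstacle is getting the head-of-Fourier-series estimate below $\eps^{\tau+\beta}$, which requires both the Diophantine lower bound $\|kr^*\|\gtrsim\eps^\beta$ AND the $\mathcal C^l$-decay $|g_k|\lesssim|k|^{-l}$ to be used together with a dyadic decomposition in $|k|$, and this is exactly why the lemma needs $l\ge 12$ and the specific inequalities relating $A,\tau,b,\beta$; the bookkeeping of these inequalities (ensuring $\tau\le A-2\beta$, $\tau\le(l-1)b-A-\beta$, $\tau\le\beta$ all hold for the sample choice $A=7\beta/3$, $b=\beta/3$, $\tau=\beta/3$) is the routine part. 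I would present the two regimes $|k|\le\eps^{-b}$ and $|k|>\eps^{-b}$, handle the tail by $\mathcal C^l$ summability as above, and handle the head by the Diophantine bound plus the observation that although each term is at worst $\eps^{-\beta}$, the $|k|^{-l}$ weights with $l\ge 12$ make $\sum_{0<|k|\le\eps^{-b}}|k|^{-l}$ bounded, so the head is $O(\eps^{-\beta})$ — and I will need to double-check against the paper that the claimed RHS is in fact $\eps^{\tau+\beta}$ with $\tau+\beta$ possibly negative in their convention, or equivalently that $N$ has been chosen precisely so that $N g_0$ is the dominant term of size $\eps^{-A}$ while the fluctuation is genuinely smaller; the mechanism making this work is that $N$ is chosen adaptively ($N\le\eps^{-A}$, not $=\eps^{-A}$) so that $N$ is a near-multiple of the continued-fraction denominator of $r^*$, making the geometric sums telescope to $O(\eps^\beta)$ rather than $O(\eps^{-\beta})$. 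That adaptive choice of $N$ — picking $N$ comparable to a suitable Diophantine denominator of $r^*$ so that $\sum_{j<N}e^{2\pi i k j r^*}$ is small for all relevant $k$ simultaneously — is the crux, and is the step I expect to be the main obstacle to write cleanly.
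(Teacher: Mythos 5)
You correctly arrive, after several false starts, at the same strategy as the paper: Fourier-expand $g$, isolate $g_0$, split the remaining sum at $|m|\sim\eps^{-b}$, kill the tail $|m|>\eps^{-b}$ using the $\mathcal C^l$ decay $|g_m|\lesssim|m|^{-l}$, and for the head $0<|m|\le\eps^{-b}$ write each inner sum as the geometric ratio $(e^{2\pi iNmr^*}-1)/(e^{2\pi imr^*}-1)$, lower-bound the denominator by the Diophantine hypothesis, and make the numerator small by choosing $N$ adaptively so that $\|Nr^*\|$ is tiny. That last diagnosis, which you reach only at the very end, is exactly the mechanism of the paper's proof.

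However, the proposal stops precisely at the crux and does not supply it. You write that the adaptive choice of $N$ ``is the crux, and is the step I expect to be the main obstacle to write cleanly'' and leave it there. The paper executes this step in three cases: (i) if $r^*=p/q$ with $\eps^{-b}<|q|\le\eps^{-A}$, take $N=|q|$ so the head vanishes identically; (ii) if $r^*=p/q$ with $|q|>\eps^{-A}$, apply the pigeonhole principle to the $\eps^{-A}+1$ fractional parts $\{jr^*\}$, $0\le j\le\eps^{-A}$, to produce $N\le\eps^{-A}$ with $\|Nr^*\|\le 2\eps^A$; (iii) if $r^*$ is irrational, take $N=q_n$, the continued-fraction convergent denominator with $q_{n+1}>\eps^{-A}$, which gives $\|Nr^*\|<1/q_{n+1}\le\eps^A$ and $N\le\eps^{-A}$. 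In all three cases the Diophantine hypothesis guarantees $\|mr^*\|>\eps^\beta$ for $0<|m|\le\eps^{-b}$ (since otherwise $r^*$ would lie within $\eps^\beta$ of a rational with denominator $\le\eps^{-b}$), so the ratio is $\lesssim\eps^{A-\beta}$, the head is $\lesssim\eps^{A-\beta}\sum_m|g_m|\lesssim\eps^{A-\beta}\le\eps^{\tau+\beta}$ by $\tau\le A-2\beta$, and the tail is $\lesssim N\eps^{(l-1)b}\lesssim\eps^{(l-1)b-A}\le\eps^{\tau+\beta}$ by $\tau\le(l-1)b-A-\beta$. Without this construction of $N$ the lemma is not proved, so the proposal as written has a genuine gap. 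Separately, the dyadic-decomposition and $\min\{N,\|kr^*\|^{-1}\}$ detours in the middle of your proposal are not needed once $N$ is chosen adaptively, and should be removed; the paper does not use them.
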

\begin{proof} Denote $g_0(r)=\int_0^1g(\theta,r)d\theta$. 
Expand $g(\theta,r)$ in its Fourier series, i.e.:
$$
g(\theta,r)=g_0(r)+\sum_{m\in \Z\setminus \{0\}} g_m(r) e^{2\pi im\theta}
$$
for some $g_m(r):\mathbb{R}\to \mathbb{C}$. Then we have:
\begin{eqnarray}\label{rewritesum}
 &&\sum_{k=0}^{N-1}(g(\theta^*+kr^*,r^*)-g_0(r^*))=
\sum_{k=0}^{N-1}\sum_{m\in \Z\setminus \{0\}}g_m(r^*) e^{2\pi im(\theta^*+kr^*,r^*))}\nonumber\\
&=&\sum_{k=0}^{N-1}\sum_{1\leq|m|\leq[\eps^{-b}]}g_m(r^*) e^{2\pi im(\theta^*+kr^*)}+\sum_{k=0}^N\sum_{|m|\geq[\eps^{-b}]}g_m(r^*) e^{2\pi im(\theta^*+kr^*)}\nonumber\\
&=&\sum_{1\leq|m|\leq[\eps^{-b}]}g_m(r^*)e^{2\pi im\theta^*}\sum_{k=0}^{N-1} e^{2\pi imkr^*}+\sum_{k=0}^{N-1}
\sum_{|m|\geq[\eps^{-b}]}g_m(r^*) e^{2\pi im(\theta^*+kr^*)}\nonumber\\
&=&\sum_{1\leq|m|\leq[\eps^{-b}]}g_m(r^*)e^{2\pi im\theta^*}
\frac{e^{2\pi iNmr^*}-1}{e^{2\pi imr^*}-1}+
\sum_{k=0}^{N-1}\sum_{|m|\geq[\eps^{-b}]}
g_m(r^*) e^{2\pi im(\theta^*+kr^*)}.\nonumber\\
\end{eqnarray}
To bound the first sum in \eqref{rewritesum} we distinguish into the following cases:
\begin{itemize}
\item If $r^*$ is rational $p/q$, we know that $|q|>\eps^{-b}$. 
\begin{itemize}
\item 
If $|q|\le \eps^{-A}$, then pick $N=|q|$ and the first sum vanishes. 

\item If $|q|>\eps^{-A}$, then by definition of $r^*$ for 
any $s/m$ with $|m|<\eps^{-b}$ we have
or $|m r^*-s|>\eps^\beta$. By the pigeon hole principle 
there exist integers $0<N=\tilde q<\eps^{-A}$ and $\tilde p$ such that
$|\tilde qr^*-\tilde p|\le 2 \eps^{A}$. 
\end{itemize}
\item If $r^*$ is irrational, consider a continuous fraction expansion 
$p_n/q_n\to r^*$ as $n\to \infty$. Choose $p'/q'=p_n/q_n$ with
$n$ such that $q_{n+1}>\eps^{-A}$. This implies that 
$|q'r^*-p'|<1/|q_{n+1}|\le \eps^A$. The same argument as above 
shows that for any $|m|<\eps^{-b}$ we have
$|m r^*-s|>\eps^\beta$.
\end{itemize}


Let $N$ be as above. Then 
$$\left|\sum_{1\leq|m|\leq[\eps^{-b}]}\ g_m(r^*)\ e^{2\pi im\theta^*}\ \frac{e^{2\pi iNmr^*}-1}{e^{2\pi imr^*}-1}\right|\leq 2 \eps^{A-\beta}\sum_{1\leq|m|\leq[\eps^{-b}]}|g_m(r^*)|.$$
We point out that since $g(\theta,\textr{r})$ is $\mathcal{C}^l$, then its Fourier coefficients satisfy $|g_m(r^*)|\le C|m|^{-l},\ m\ne 0$. Thus we can bound the first sum in \eqref{rewritesum} by:
\begin{eqnarray}\label{firstsum}
\left|\sum_{1\leq|m|\leq[\eps^{-b}]}\ g_m(r^*)\ e^{2\pi im\theta^*}\ \frac{e^{2\pi iNmr^*}-1}{e^{2\pi imr^*}-1}\right|&& \\ 
\leq\eps^{A-\beta}\sum_{1\leq|m|\leq[\eps^{-b}]}|g_m(r^*)|
&\leq&C\eps^{A-\beta}\sum_{1\leq|m|\leq[\eps^{-b}]}\frac{1}{m^2}
\leq K\eps^{A-\beta}.\nonumber\\
\end{eqnarray}

To bound the second sum we use again the bound for the Fourier coefficients $g_m(r^*)$:
\begin{equation}\label{secondsum}
\beal
\left|\sum_{k=0}^N\sum_{|m|\geq[\eps^{-b}]}g_m(r^*) e^{2\pi im(\theta+kr^*)}\right|\leq N\sum_{|m|\geq[\eps^{-b}]}\frac{1}{m^l}\le
\qquad \qquad \qquad
 \\
 KN\eps^{(l-1)b}\le K \eps^{(l-1)b-A}. 
\enal
\end{equation}

Clearly, taking $\tau=\textb{1-2A}\le 
\min\{A-2\beta,(l-1)b-A-\beta,\beta\}$, 
and substituting \eqref{firstsum} and \eqref{secondsum} in \eqref{rewritesum} we obtain the claim of the lemma.
\end{proof}

\begin{lem}\label{lemsumaverages}
Let $\beta$ satisfy \eqref{conditionbeta}, and $b=(\beta-\rho)/2$ with $\rho$ satisfying \eqref{conditionrho}. Let $n_\beta$ be an exit time of the process $(\theta_n,r_n)$ defined by \eqref{eq:NRmap-n} from some bounded domain $I_\beta$. Let $\delta>0$ be small enough. 
Suppose that $n_\beta\geq\eps^{-\textb{2}(1-\beta)+\delta}$. For all $l\geq 8$ the following holds:
\begin{enumerate}
\item Given two $\mathcal{C}^l$ functions $h:\mathbb{R}\to\mathbb{R}$ and $g:\mathbb{T}\times\mathbb{R}\to\mathbb{R}$, there exists a constant $d>0$ such that:
$$\eps^2\sum_{k=0}^{n_\beta-1}e^{-\lambda\eps^2k}h(r_k)(g(\theta_k,r_k)-g_0(r_k))=\mathcal{O}(\eps^{d}),$$
where $g_0(r)=\int_0^1g(\theta,r)d\theta$. 

\item If $n_\beta<\eps^{-\textb{2(1-\beta)+\delta}}$, then given 
a $\mathcal{C}^l$ function $g: \mathbb{T}\times\mathbb{R} \to\mathbb{R}$ and a collection of functions $h_k:\mathbb{R}\to\mathbb{R}$, with $\|h_k\|_{\mathcal{C}^0}\leq M$ and $\|h_{k+1}-h_k\|_{\mathcal{C}^0}\leq M\eps^2$ for all $k$, there exists a constant $d>0$ such that
$$\eps^2\sum_{k=0}^{n_\beta-1}h_k(r_k)(g(\theta_k,r_k)-g_0(r_k))=\mathcal{O}(\eps^{2\beta+d}),$$
where $g_0(r)=\int_0^1g(\theta,r)d\theta$.
\end{enumerate}

\end{lem}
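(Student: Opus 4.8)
The plan is to reduce both statements to repeated application of the ergodization estimate of Lemma \ref{lemmasigma2}. The idea is to chop the time interval $[0,n_\beta)$ into consecutive blocks of a suitable fixed length $N=N(\eps)\le \eps^{-A}$, and on each block freeze the slow variable $r$ and the (slowly varying) weight $h$ at their values at the left endpoint of the block, so that the partial sum over the block becomes a sum of the form $\sum_{k} g(\theta^*+kr^*,r^*)$ up to controllable errors. Concretely, write $\theta_k=\theta_0+k r_0+\mathcal O(k\eps)$ from \eqref{eq:NRmap-n}, and note that over a block of length $N\le\eps^{-A}$ the angle $\theta_k$ differs from the rigid rotation $\theta^*+ (k-k_{\text{block start}}) r^*$ by $\mathcal O(N^2\eps)=\mathcal O(\eps^{1-2A})$, and $r_k$ changes by $\mathcal O(N\eps)=\mathcal O(\eps^{1-A})$; since $g$ is $\mathcal C^l$ (in particular $\mathcal C^1$), replacing the true orbit by the frozen rigid rotation on each block costs $\mathcal O(N\cdot\eps^{1-2A})$ per block in the relevant sum. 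Because $r_k$ stays inside the (totally irrational) domain $I_\beta$, the frozen value $r^*$ satisfies the hypothesis of Lemma \ref{lemmasigma2}, so each block sum of $g(\theta^*+kr^*,r^*)-g_0(r^*)$ is $\mathcal O(\eps^{\tau+\beta})$ rather than $\mathcal O(N)$. Summing over the $\le n_\beta/N$ blocks and bookkeeping the errors gives the claims.

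For part (1): with the exponential weight $e^{-\lambda\eps^2 k}h(r_k)$, on a block the weight is constant up to $\mathcal O(N\eps^2)$ (from the variation of $e^{-\lambda\eps^2 k}$) plus $\mathcal O(N\eps)$ (from the variation of $h(r_k)$), so factoring the weight out of each block sum costs $\mathcal O(N\eps)$ times the trivial bound $N$ on the unweighted block sum, i.e. $\mathcal O(N^2\eps)$ per block. Multiplying by $\eps^2$ and summing over $\le n_\beta/N\le \eps^{-2}/N$ blocks, the total contribution of the three error sources is
\[
\eps^2\cdot\frac{n_\beta}{N}\Big(N\,\eps^{1-2A}+N^2\eps+\eps^{\tau+\beta}\Big)
=\mathcal O\!\left(\eps^{1-2A}+N\eps+N^{-1}\eps^{\tau+\beta}\right).
\]
Now one optimizes: choosing $A$ as in the "in particular" clause of Lemma \ref{lemmasigma2} (so $A=7\beta/3$, $\tau=\beta/3$, $b=\beta/3$, all compatible with $0<\beta\le 1/5$ and with \eqref{conditionbeta}--\eqref{conditionrho}) and $N\sim\eps^{-A}$, every exponent above is strictly positive — here one must also invoke the lower bound $n_\beta\ge \eps^{-2(1-\beta)+\delta}$ only insofar as it is needed to make the "number of blocks" bound $n_\beta/N$ meaningful and to absorb boundary blocks; the upshot is the existence of $d>0$ with total error $\mathcal O(\eps^d)$. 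The $l\ge 8$ hypothesis enters through Lemma \ref{lemmasigma2}'s requirement $2\beta<A<(l-1)b-\beta$, which with $A=7\beta/3$, $b=\beta/3$ reads $7\beta/3<(l-1)\beta/3-\beta$, i.e. $l>11$; for part (1) a slightly more generous block choice suffices and one gets $l\ge 8$.

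For part (2): the argument is the same but now $n_\beta<\eps^{-2(1-\beta)+\delta}$ is an \emph{upper} bound on the number of iterations, so the number of blocks is $\le n_\beta/N\le \eps^{-2(1-\beta)+\delta}/N$, and the weights $h_k$ are only assumed to satisfy $\|h_k\|_{\mathcal C^0}\le M$, $\|h_{k+1}-h_k\|_{\mathcal C^0}\le M\eps^2$; thus on a block the weight varies by $\mathcal O(N\eps^2)$ and factoring it out costs $\mathcal O(N\eps^2)\cdot N=\mathcal O(N^2\eps^2)$ per block, while the frozen-orbit error is again $\mathcal O(N\eps^{1-2A})$ per block and the ergodization error $\mathcal O(\eps^{\tau+\beta})$ per block. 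Multiplying by $\eps^2$ and summing,
\[
\eps^2\cdot\frac{n_\beta}{N}\Big(N\,\eps^{1-2A}+N^2\eps^2+\eps^{\tau+\beta}\Big)
=\mathcal O\!\left(\eps^{2\beta+\delta}\big(\eps^{1-2A}N^{-1}\cdot\eps^{-1}+N\eps+N^{-1}\eps^{\tau+\beta}\big)\cdot\eps^{-2\beta-\delta}\cdot\eps^{2\beta}\right),
\]
which after the (routine) arithmetic of collecting powers, using $n_\beta<\eps^{-2(1-\beta)+\delta}$ to pull out the factor $\eps^{2\beta}$, becomes $\mathcal O(\eps^{2\beta+d})$ for some $d>0$ with the same choices $A=7\beta/3$, $\tau=\beta/3$, $b=\beta/3$, $N\sim\eps^{-A}$; here the constraint $l\ge 12$ is exactly what makes $A<(l-1)b-\beta$ hold with room to spare, and hence what forces the hypothesis stated in the lemma.

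The main obstacle is the bookkeeping of the three competing error sources — frozen-orbit error $\sim N^2\eps$, frozen-weight error $\sim N^2\eps$ (part 1) or $\sim N^2\eps^2$ (part 2), and residual ergodization error $\sim (n_\beta/N)\eps^{\tau+\beta}$ — and verifying that a single choice of block length $N$ and of the exponents $(A,\tau,b)$ from Lemma \ref{lemmasigma2} makes all of them simultaneously of size $\mathcal O(\eps^{d})$ (resp. $\mathcal O(\eps^{2\beta+d})$); this is where the precise numerology $l\ge 12$, $0<\beta\le 1/5$, $0<\rho<R\beta$ with $R<(l-11)/(l-1)$ gets used, and where one has to be careful that the incomplete block at the right end of $[0,n_\beta)$ and the $\mathcal O(n\eps^{1+a})$ and $\eps v_2$ remainders in \eqref{eq:NRmap-n} contribute only lower-order terms.
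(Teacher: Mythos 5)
Your proposal follows the same route as the paper's proof: split $[0,n_\beta)$ into blocks of length $N\approx\eps^{-A}$, freeze the slow variable, the angle-rotation number, and the weight at the start of each block (with $\theta$-error $\mathcal O(N^2\eps)$ and $r$-error $\mathcal O(N\eps)$ per block), apply the ergodization Lemma~\ref{lemmasigma2} to each frozen block, treat the final incomplete block of length $Q_\beta<N$ crudely, and tally the errors with the same exponents $(A,\tau,b)$.

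One imprecision worth flagging. In part (1) you write
\[
\eps^2\cdot\frac{n_\beta}{N}\bigl(N\,\eps^{1-2A}+N^2\eps+\eps^{\tau+\beta}\bigr)
=\mathcal O\!\bigl(\eps^{1-2A}+N\eps+N^{-1}\eps^{\tau+\beta}\bigr),
\]
which implicitly uses $\eps^2 n_\beta=\mathcal O(1)$. But part (1) has no upper bound on $n_\beta$ (it is invoked in Lemma~\ref{lemmaexpectation} precisely in the regime $n_\beta>\eps^{-2(1-\beta)-\delta}$, which can be $\gg\eps^{-2}$). The finiteness of the sum over blocks comes from the geometric tail
\[
\sum_{k\ge 0}e^{-\lambda\eps^2 kN}\;\le\;\frac{1}{1-e^{-\lambda\eps^2 N}}\;\lesssim\;\frac{1}{\lambda\,\eps^2 N},
\]
not from the hypothesis $n_\beta\ge\eps^{-2(1-\beta)+\delta}$, which you invoke there. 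That lower bound is used for something else, namely to guarantee $\eps^{-A}<n_\beta$ so that at least one full block fits (as in the paper's choice $A<(1-\beta)/2$). Aside from this, your accounting in part (2) (the messy displayed chain of powers of $\eps$) is garbled but the conclusion is right once one substitutes $n_\beta<\eps^{-2(1-\beta)+\delta}$, $N=\eps^{-A}$ and $\tau=1-2A$; and the remark about $l\ge 8$ versus $l\ge 12$ is a defect of the paper's own statement, not something your "more generous block choice" resolves.
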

\begin{proof}
 We shall prove both claims using Lemma \ref{lemmasigma2}. To that aim fix $2\beta<A<\min\{(l-1)b-\beta,(1-\beta)/2\}$. We note that \eqref{conditionbeta} and \eqref{conditionrho} ensure that $2\beta<\min\{(l-1)b-\beta,(1-\beta)/2\}$, so there always exists such $A$.
 
Since we have $n_\beta\geq\eps^{-(1-\beta)+\delta}$ and we have taken $A<(1-\beta)/2<1-\beta-\delta$ (the second inequality being satisfied because $\delta$ is small), we have $\eps^{-A}<n_\beta$.
Choose $N<\eps^{-A}$ from Lemma \ref{lemmasigma2} 
and write $n_\beta=P_\beta N+Q_\beta$, for some integers $P_\beta$ and  $0\le Q_\beta<N$. Then:
{\small 
\begin{eqnarray}\label{rewritePbQb}
\beal 
&&\eps^2\left|\sum_{k=0}^{n_\beta-1}e^{-\lambda\eps^2k}h(r_k)(g(\theta_k,r_k)-g_0(r_k))\right|
\qquad \qquad \qquad \qquad \qquad \qquad \\
&\leq&\eps^2\left|\sum_{k=0}^{P_\beta-1}\sum_{j=0}^{N-1}e^{-\lambda\eps^2(kN+j)}h(r_{kN+j})(g(\theta_{kN+j},r_{kN+j})-g_0(r_{kN+j}))\right| \quad \\
&+&\eps^2\left|\sum_{j=0}^{Q_\beta-1}e^{-\lambda\eps^2(P_\beta N+j)}h(r_{P_\beta N+j})(g(\theta_{P_\beta N+j},r_{P_\beta N+j})-g_0(r_{P_\beta N+j}))\right|.
\enal
\end{eqnarray}}

Let us prove item 1. We shall bound the two terms in the right hand side of \eqref{rewritePbQb} in a different way. Recall that 
in the normal form (\ref{NFfar}) we have 
\be
\beal 
f_{\omega}
\left(\begin{array}{c}\theta\\r\end{array}\right)\mapsto
\left(\begin{array}{c}\theta+r+\eps \Eu(\theta,r)+ 
\eps \omega u(\theta,r)+\textb{\mathcal O_l(\eps^{1+a})}
\\
r+\eps^2 E_2(\theta,r)+\textb{\mathcal O_l(\eps^{2+a})}.
\end{array}\right).
\enal 
\ee
On the one hand we have that for all $k\leq P_\beta$, and 
all $j\leq N$:
\begin{equation}\label{decompr}
r_{kN+j}=r_{kN}+\mathcal{O}(\textb{N}\eps^2),
\end{equation}
and:
\begin{equation}\label{decomptheta}
\theta_{kN+j}=\theta_{kN}+jr_{kN}+\mathcal{O}(\textb{N^2}\eps).
\end{equation}
Hence:
\begin{eqnarray*}
 &&e^{-\lambda\eps^2(kN+j)}
h(r_{kN+j})(g(\theta_{kN+j},r_{kN+j})-g_0(r_{kN+j}))\\
&=&e^{-\lambda\eps^2kN}
h(r_{kN})(g(\theta_{kN}+jr_{kN},r_{kN})-g_0(r_{kN}))+\mathcal{O}(e^{-\lambda\eps^2kN}\textb{N^2}\eps).
\end{eqnarray*}
Then:
\begin{eqnarray*}
 && \eps^2\left|\sum_{k=0}^{P_\beta-1}\sum_{j=0}^{N-1}e^{-\lambda\eps^2(kN+j)}
h(r_{kN+j})(g(\theta_{kN+j},r_{kN+j})-g_0(r_{kN+j}))\right|\\
&\leq&\eps^2\left|\sum_{k=0}^{P_\beta-1}e^{-\lambda\eps^2kN}
h(r_{kN})\sum_{j=0}^{N-1}(g(\theta_{kN}+jr_{kN},r_{kN})-g_0(r_{kN}))\right|\\
&&+K\textb{N^3}\eps^3\sum_{k=0}^{P_\beta-1}e^{-\lambda\eps^2kN}.
\end{eqnarray*}

Thus, using Lemma \ref{lemmasigma2} we obtain:
\begin{eqnarray*}
&&\eps^2\left|\sum_{k=0}^{P_\beta-1}e^{-\lambda\eps^2kN}
h(r_{kN})\sum_{j=0}^{N-1}(g(\theta_{kN}+jr_{kN},r_{kN})-g_0(r_{kN}))\right|\nonumber\\
&\leq&K\eps^{2+\tau+\beta}\sum_{k=0}^{P_\beta-1}e^{-\lambda\eps^2kN}|h(r_{kN})|\leq \tilde K\eps^{\tau+\beta},
\end{eqnarray*}
for some constants $K,\tilde K>0$. Moreover, we have:
$$K\textb{N^3}\eps^{3}\sum_{k=0}^{P_\beta-1}e^{-\lambda\eps^2kN}\leq \tilde K\eps^{1\textb{+\beta-2}A}.$$
Thus:
\begin{equation}\label{boundPbeta}
\beal 
 \eps^2\left|\sum_{k=0}^{P_\beta-1}\sum_{j=0}^{N-1}e^{-\lambda\eps^2(kN+j)}
h(r_{kN+j})(g(\theta_{kN+j},r_{kN+j})-g_0(r_{kN+j}))\right|
\\ \leq K(\eps^{\tau+\beta}+\eps^{1\textb{+\beta-2}A}). \qquad \qquad 
 \qquad \qquad  \qquad \qquad 
\enal 
\end{equation}

On the other hand we have:
\begin{eqnarray}\label{boundQbeta}
&&\eps^2\left|\sum_{j=0}^{Q_\beta-1}
e^{-\lambda\eps^2(P_\beta N+j)}h(r_{P_\beta N+j})(g(\theta_{P_\beta N+j},r_{P_\beta N+j})-g_0(r_{P_\beta N+j}))\right|\nonumber\\
&&\leq \eps^2K\sup_{(\theta,r)\in I_\beta}|h(r)(g(\theta,r)-g_0(r))|Q_\beta \leq \tilde K \eps^{2-A}. 
\end{eqnarray}

In conclusion, using \eqref{boundPbeta} and \eqref{boundQbeta} in equation \eqref{rewritePbQb} we obtain:
$$\eps^2\left|\sum_{k=0}^{n_\beta-1}e^{-\lambda\eps^2k}h(r_k)(g(\theta_k,r_k)-g_0(r_k))\right|\leq K(\eps^{2-A}+\eps^{1\textb{+\beta-2}A}+\eps^{\tau+\beta}).$$
Denoting $d=\min\{2-A,1\textb{+\beta-2}A,\tau+\beta\}$, and 
\textb{letting $1-2A=\tau\ge \beta/3$}.
the proof of item 1 is finished.

Now let us prove item 2. The proof is very similar to item 1. We can use the same formula \eqref{rewritePbQb} substituting $e^{-\lambda\eps^2k}h(r_k)$ by $h_k(r_k)$. Again, using \eqref{decompr} and \eqref{decomptheta} we can write:
\begin{eqnarray*}
 &&h_{kN+j}(r_{kN+j})(g(\theta_{kN+j},r_{kN+j})-g_0(r_{kN+j}))\\
&=&h_{kN}(r_{kN})(g(\theta_{kN}+jr_{kN},r_{kN})-g_0(r_{kN}))+\mathcal{O}(\textb{N^2}\eps).
\end{eqnarray*}
Then:
\begin{eqnarray*}
 && \eps^2\left|\sum_{k=0}^{P_\beta-1}\sum_{j=0}^{N-1}h_{kN+j}(r_{kN+j})(g(\theta_{kN+j},r_{kN+j})-g_0(r_{kN+j}))\right|\\
&\leq&\eps^2\left|\sum_{k=0}^{P_\beta-1}h_{kN}(r_{kN})\sum_{j=0}^{N-1}(g(\theta_{kN}+jr_{kN},r_{kN})-g_0(r_{kN}))\right|+K\textb{N^3}\eps^{3}P_\beta.
\end{eqnarray*}
Thus, using Lemma \ref{lemmasigma2} we obtain:
$$\eps^2\left|\sum_{k=0}^{P_\beta-1}h_{kN}(r_{kN})\sum_{j=0}^{N-1}(g(\theta_{kN}+jr_{kN},r_{kN})-g_0(r_{kN}))\right|\leq
$$
$$ K\eps^{2+\tau+\beta}P_\beta\leq K\eps^{\tau+2\beta},
$$
where we have used that $P_\beta\leq n_\beta\leq\eps^{-2+2\beta-\delta}\leq\eps^{-2+\beta}$. 
For the same reason we have 
$K\eps^{3-2A}P_\beta\leq K\eps^{1-2A+\beta}$.Thus:
\begin{equation}\label{boundPbeta-item2}
\beal 
 \eps^2\left|\sum_{k=0}^{P_\beta-1}\sum_{j=0}^{N-1}h_{kN+j}(r_{kN+j})(g(\theta_{kN+j},r_{kN+j})-g_0(r_{kN+j}))\right|\leq \\ K(\eps^{\tau+2\beta}+\eps^{1-2A+\beta}).  \qquad \qquad  \qquad \qquad  \qquad \qquad 
\enal 
\end{equation}

On the other hand we have:
\begin{eqnarray}\label{boundQbeta-item2}
&&\eps^2\left|\sum_{j=0}^{Q_\beta-1}h_{P_\beta N+j}(r_{P_\beta N+j})(g(\theta_{P_\beta N+j},r_{P_\beta N+j})-g_0(r_{P_\beta N+j}))\right|\nonumber\\
&&\leq \eps^2KQ_\beta \leq \tilde K \eps^{2-A}.
\end{eqnarray}

In conclusion, using \eqref{boundPbeta-item2} and \eqref{boundQbeta-item2} in equation \eqref{rewritePbQb} we obtain:
$$\eps^2\left|\sum_{k=0}^{n_\beta-1}e^{-\lambda\eps^2k}h_k(r_k)(g(\theta_k,r_k)-g_0(r_k))\right|\leq K(\eps^{2-A}+\eps^{1-2A+\beta}+\eps^{\tau+2\beta}).$$
Choosing $\tau=1-2A>0$ the last two terms are the same.
In particular, $A<1/2$ and $2-A>3/2$. Therefore, the first
term is negligible. 
\end{proof}

Let $r_0$ belong to the TI case. Consider an interval $I_\beta=\{(\theta,r)\in\mathbb{T}\times\mathbb{R}\,:\,|r-r_0|\leq\eps^\beta\}$, for some $0<\beta\le 1/5$. Denote $n_\beta\in\mathbb{N}$ the exit time from $I_\beta$, that is the first number such that $(\theta_{n_\beta},r_{n_\beta})\not\in I_\beta$. 

\begin{lem}\label{lemmaexpectation}
Let $\beta$ satisfy \eqref{conditionbeta}, and $b=(\beta-\rho)/2$ with $\rho$ satisfying \eqref{conditionrho}. Take $f:\mathbb{R}\rightarrow\mathbb{R}$ be any $\mathcal{C}^l$ function with $l\ge 12$. Then there exists $d>0$ such that for all $\lambda>0$ one has:
\begin{eqnarray*}
&&\E\left(e^{-\lambda\eps^2n_\beta}f(r_{n_\beta})+ \right. \\
&& \left. \eps^2
\sum_{k=0}^{n_\beta-1}e^{-\lambda\eps^2k}\left[\lambda f(r_k)-\left(b(r_k)f'(r_k)+\frac{\sigma^2(r_k)}{2}f''(r_k)\right)\right]\right)\\
&&\qquad \qquad \qquad \qquad \qquad  \qquad \qquad 
- f(r_0)=\mathcal{O}(\eps^{2\beta+d}),
\end{eqnarray*}
where for $E_2(\theta,r)$, defined in (\ref{eq:drift}), we have 
$$b(r)=\int_0^1E_2(\theta,r)d\theta,\qquad \sigma^2(r)=\int_0^{1}v^2(\theta,r)d\theta.$$
\end{lem}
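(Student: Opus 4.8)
The plan is to expand $f(r_{n_\beta})$ via a discrete Taylor (Dynkin-type) expansion along the random orbit and then identify the leading contributions with the drift and variance terms. Concretely, I would write
$$
e^{-\lambda\eps^2 n_\beta}f(r_{n_\beta})-f(r_0)=\sum_{k=0}^{n_\beta-1}\left(e^{-\lambda\eps^2(k+1)}f(r_{k+1})-e^{-\lambda\eps^2 k}f(r_k)\right),
$$
and split each summand as $e^{-\lambda\eps^2 k}(f(r_{k+1})-f(r_k))+(e^{-\lambda\eps^2(k+1)}-e^{-\lambda\eps^2 k})f(r_{k+1})$. The second piece contributes, up to $\mathcal O(\eps^4)$ per step, the term $-\lambda\eps^2 e^{-\lambda\eps^2 k}f(r_k)$, which cancels the $\lambda f(r_k)$ term in the statement. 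For the first piece I use the normal-form expression \eqref{eq:NRmap-n}: the increment is $r_{k+1}-r_k=\eps\omega_k(v(\theta_k,r_k)+\eps v_2(\theta_k,r_k))+\eps^2 E_2(\theta_k,r_k)+\mathcal O_l(\eps^{2+a})$. Taylor expanding $f$ to second order,
$$
f(r_{k+1})-f(r_k)=f'(r_k)(r_{k+1}-r_k)+\tfrac12 f''(r_k)(r_{k+1}-r_k)^2+\mathcal O(|r_{k+1}-r_k|^3),
$$
where the cubic remainder is $\mathcal O(\eps^3)$ per step, hence $\mathcal O(\eps^3 n_\beta)=\mathcal O(\eps^{1+2\beta})$ over the whole strip (using $n_\beta\le\eps^{-2+2\beta}$ plus the local-time bound for the boundary-crossing time), which is $\mathcal O(\eps^{2\beta+d})$ with $d=1-\eps^0$; I'll need to be slightly careful here and use $n_\beta \leq \eps^{-2+2\beta-\delta}$ or the crossing-time estimate to get the exponent right.

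Next I group the resulting sum into three pieces: (i) the martingale part $\eps\sum_k e^{-\lambda\eps^2 k}\omega_k f'(r_k)(v+\eps v_2)(\theta_k,r_k)$; (ii) the drift part $\eps^2\sum_k e^{-\lambda\eps^2 k}f'(r_k)E_2(\theta_k,r_k)$; (iii) the quadratic part $\tfrac12 f''(r_k)(r_{k+1}-r_k)^2$, whose leading term is $\tfrac{\eps^2}{2}f''(r_k)\omega_k^2 v^2(\theta_k,r_k)=\tfrac{\eps^2}{2}f''(r_k)v^2(\theta_k,r_k)$ since $\omega_k^2=1$, plus cross terms of order $\eps^3$. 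For (i), since $\E(\omega_k\mid\mathcal F_k)=0$ and $r_k,\theta_k$ are $\mathcal F_k$-measurable, each term has zero conditional expectation, so the whole sum vanishes in expectation — this is the key point where independence and the symmetric Bernoulli choice are used; actually one must sum and take expectation, using that $n_\beta$ is a stopping time and the optional stopping / tower property to kill it, and also control the $\eps v_2$ correction, which carries an extra $\eps$ and thus contributes $\mathcal O(\eps^2 n_\beta)=\mathcal O(\eps^{2\beta})$ — here one should instead observe this correction term is also a martingale increment (it still carries the factor $\omega_k$) so it too has zero expectation. For (ii) and (iii), I replace the $\theta$-dependent functions $E_2(\theta_k,r_k)$ and $v^2(\theta_k,r_k)$ by their $\theta$-averages $b(r_k)=\int_0^1 E_2(\theta,r_k)\,d\theta$ and $\sigma^2(r_k)=\int_0^1 v^2(\theta,r_k)\,d\theta$; this is exactly the content of Lemma \ref{lemsumaverages}, item 2, applied with $g=E_2$ (resp. $g=v^2$) and $h_k(r)=e^{-\lambda\eps^2 k}f'(r)$ (resp. $\tfrac12 e^{-\lambda\eps^2 k}f''(r)$), which satisfy $\|h_k\|_{\mathcal C^0}\le M$ and $\|h_{k+1}-h_k\|_{\mathcal C^0}\le M\eps^2$ because $e^{-\lambda\eps^2(k+1)}-e^{-\lambda\eps^2 k}=\mathcal O(\eps^2)$. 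Each such replacement costs $\mathcal O(\eps^{2\beta+d})$.

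Assembling, the $\lambda f(r_k)$ term from (the second piece above) cancels against the $\eps^2\sum e^{-\lambda\eps^2 k}\lambda f(r_k)$ in the statement; the drift sum becomes $\eps^2\sum e^{-\lambda\eps^2 k}b(r_k)f'(r_k)+\mathcal O(\eps^{2\beta+d})$, matching the $-b(r_k)f'(r_k)$ term; and the quadratic sum becomes $\tfrac{\eps^2}{2}\sum e^{-\lambda\eps^2 k}\sigma^2(r_k)f''(r_k)+\mathcal O(\eps^{2\beta+d})$, matching the $-\tfrac{\sigma^2(r_k)}{2}f''(r_k)$ term. The martingale sum (i) contributes $0$ in expectation. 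All remainders are $\mathcal O(\eps^{2\beta+d})$ for a suitable $d>0$ (the minimum of the various exponents $2-A$, $1-2A+\beta$, $\tau+\beta$, etc., appearing in Lemma \ref{lemsumaverages} and the cubic-remainder estimate), which gives the claim.

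The main obstacle I anticipate is bookkeeping the error exponents so that the final bound is genuinely $\mathcal O(\eps^{2\beta+d})$ with $d>0$ rather than, say, $\mathcal O(\eps^{2\beta})$ or worse: the naive cubic Taylor remainder summed over $n_\beta\sim\eps^{-2+2\beta}$ steps is only $\mathcal O(\eps^{1+2\beta})$ which is fine, but the $\eps v_2$ term and the $\mathcal O_l(\eps^{2+a})$ normal-form remainder summed over the orbit are more delicate and one must exploit either their martingale structure or the precise value of $a\le 1/6$ together with the strip exit time. A secondary subtlety is that $n_\beta$ is a random stopping time, so interchanging $\E$ with the sum and applying the martingale property requires either optional stopping with a uniform bound on $n_\beta$ (which holds since $n_\beta\le\eps^{-2+2\beta}$ deterministically, or via Lemma-type a priori bounds on exit times) or a truncation argument; and one must also handle the event that the orbit does not satisfy the "totally irrational" genericity used in Lemma \ref{lemmasigma2}, which the hypothesis on $I_\beta$ rules out by construction.
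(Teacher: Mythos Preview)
Your overall strategy---telescoping, Taylor-expanding $f$, identifying the martingale, drift, and quadratic pieces, and invoking Lemma~\ref{lemsumaverages} to replace $E_2(\theta_k,r_k)$ and $v^2(\theta_k,r_k)$ by their $\theta$-averages---matches the paper's proof. However, there is a genuine gap in your treatment of the stopping time $n_\beta$.

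You assert that $n_\beta\le\eps^{-2+2\beta}$ holds deterministically. It does not: $n_\beta$ is the random exit time from a strip of width $\eps^\beta$, and while its typical size is of order $\eps^{-2(1-\beta)}$, it has no deterministic upper bound. This matters in two places. First, item~2 of Lemma~\ref{lemsumaverages}, which is what yields the $\mathcal O(\eps^{2\beta+d})$ error you want, carries as an explicit hypothesis an upper bound on $n_\beta$; absent that bound you are left with item~1, which gives only $\mathcal O(\eps^d)$. Second, the accumulated normal-form remainder $\sum_{k=0}^{n_\beta-1}\mathcal O(e^{-\lambda\eps^2 k}\eps^{2+a})$ cannot be controlled by $\eps^{2+a}n_\beta\le\eps^{2\beta+a}$ uniformly over all realizations.

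The paper closes this gap by conditioning on where $n_\beta$ falls relative to the window $[\eps^{-2(1-\beta)+\delta},\,\eps^{-2(1-\beta)-\delta}]$. On the typical event that $n_\beta$ lies in this window, item~2 of Lemma~\ref{lemsumaverages} applies and the remainder is bounded by $\eps^{2+a}n_\beta\le\eps^{2\beta+a-\delta}$. On the event $\{n_\beta>\eps^{-2(1-\beta)-\delta}\}$ one uses instead item~1 together with the geometric bound $\sum_k e^{-\lambda\eps^2 k}\le C_\lambda\eps^{-2}$, obtaining only $\mathcal O(\eps^d)$; but a CLT-type estimate (Lemma~\ref{main lemma}) shows this event has probability $\mathcal O(\exp(-c\eps^{-\delta}))$, so its contribution is negligible. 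The event $\{n_\beta<\eps^{-2(1-\beta)+\delta}\}$ is dispatched the same way. You mention a ``truncation argument'' as an afterthought, but this three-regime split, with the accompanying tail-probability estimates, is the substantive missing step rather than a technicality.
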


\begin{proof}
 Let us denote:
\begin{equation}\label{defeta}
\eta=e^{-\lambda\eps^2n_\beta}f(r_{n_\beta})+\eps^2
\sum_{k=0}^{n_\beta-1}e^{-\lambda\eps^2k}\left[\lambda f(r_k)-\left(b(r_k)f'(r_k)+\frac{\sigma^2(r_k)}{2}f''(r_k)\right)\right].
\end{equation}
First of all we shall use the law of total expectation. 
Fix a small enough $\delta>0$. Then we have:
\begin{eqnarray*}
\E\left(\eta\right)&=&\E\left(\eta\,|\,\eps^{-\textb{2}(1-\beta)+\delta}\leq n_\beta\leq \eps^{-2(1-\beta)-\delta}\right)
\Prob\{\eps^{-\textb{2}(1-\beta)+\delta}\leq n_\beta\leq \eps^{-2(1-\beta)-\delta}\}\\
&+&\E\left(\eta\,|\,n_\beta< \eps^{-\textb{2}(1-\beta)+\delta}\right)
\Prob\{n_\beta<\eps^{-\textb{2}(1-\beta)+\delta}\}\\
&+&\E\left(\eta\,|\,n_\beta> \eps^{-2(1-\beta)-\delta}\right)
\Prob\{n_\beta>\eps^{-2(1-\beta)-\delta}\}.
\end{eqnarray*}
By Lemma \ref{main lemma} for $\eps$ sufficiently small
and $c>0$ independent of $\eps$ we have 
\begin{equation}\label{Prob0}
\Prob\{n_\beta<\eps^{-\textb{2}(1-\beta)+\delta}\}
\le \exp\left( -\frac{c}{\eps^{2\delta}} \right).
\end{equation}

Now we write:
$$e^{-\lambda\eps^2n_\beta}f(r_{n_\beta})=f(r_0)+
\sum_{k=0}^{n_\beta-1}\left(e^{-\lambda\eps^2(k+1)}f(r_{k+1})-e^{-\lambda\eps^2k}f(r_k)\right).$$
Doing the Taylor expansion in each term inside the sum we get:
\begin{eqnarray*}
e^{-\lambda\eps^2n_\beta}f(r_{n_\beta})&=&f(r_0)+\sum_{k=0}^{n_\beta-1}\left[-\lambda\eps^2 e^{-\lambda\eps^2k}f(r_k)+e^{-\lambda\eps^2k}f'(r_k)(r_{k+1}-r_k)\right.\\
&&\left.+\frac{1}{2}e^{-\lambda\eps^2k}f''(r_k)(r_{k+1}-r_k)^2+\mathcal{O}(e^{-\lambda\eps^2k}\eps^3)\right].
\end{eqnarray*}
Substituting this in \eqref{defeta} we get:
\begin{eqnarray}\label{eta-version2}
 \eta&=&f(r_0)+\sum_{k=0}^{n_\beta-1}
\left[e^{-\lambda\eps^2k}f'(r_k)(r_{k+1}-r_k)+
\frac{1}{2}e^{-\lambda\eps^2k}f''(r_k)(r_{k+1}-r_k)^2\right]
\nonumber
\\ 
&&-\eps^2\sum_{k=0}^{n_\beta-1}
e^{-\lambda\eps^2k}\left[b(r_k)f'(r_k)+\frac{\sigma^2(r_k)}{2}f''(r_k)\right]+\sum_{k=0}^{n_\beta-1}\mathcal{O}(e^{-\lambda\eps^2k}\eps^3).
\end{eqnarray}
We note that using \eqref{eq:NRmap-n} we can write:
$$ r_{k+1}-r_k=\eps\omega_k[v(\theta_k,r_k)+\eps v_2(\theta_k,r_k)]+\eps^2E_2(\theta_k,r_k)
+\mathcal{O}(\eps^{2+a}),$$
and also:
$$(r_{k+1}-r_k)^2=\eps^2v^2(\theta_k,r_k)+\mathcal{O}(\eps^3).
$$
Thus we can rewrite \eqref{eta-version2} as:
\begin{eqnarray}\label{eta-version3}
 \eta&=&f(r_0)+\eps\sum_{k=0}^{n_\beta-1}
e^{-\lambda\eps^2k}f'(r_k)\omega_k\left[v(\theta_k,r_k)
+\eps v_2(\theta_k,r_k)\right]\nonumber\\
 &&+\eps^2\sum_{k=0}^{n_\beta-1}
e^{-\lambda\eps^2k}f'(r_k)\left[E_2(\theta_k,r_k)-b(r_k)\right]\nonumber\\
 &&+\frac{\eps^2}{2}\sum_{k=0}^{n_\beta-1}e^{-\lambda\eps^2k}f''(r_k)
\left[v^2(\theta_k,r_k)-\sigma^2(r_k)\right]\nonumber\\
 &&+\sum_{k=0}^{n_\beta-1}\mathcal{O}(e^{-\lambda\eps^2k}\eps^{2+a}).
\end{eqnarray}

Now we distinguish between the case $\eps^{-2(1-\beta)+\delta}\leq n_\beta\leq \eps^{-2(1-\beta)-\delta}$ and $n_\beta> \eps^{-2(1-\beta)-\delta}$. Consider the former case. First, 
we show that the last term  in \eqref{eta-version3} is 
$\mathcal{O}(\eps^{\beta+d})$ for some $d>0$. Indeed, 
\begin{equation}\label{errortermsum1-small}
\left|\sum_{k=0}^{n_\beta-1}\mathcal{O}(e^{-\lambda\eps^2k}\eps^{2+a})
\right|\leq K\eps^{2+a}n_\beta\leq K\eps^{2\beta+d}, 
\end{equation}
where $d=a-\delta>0$ due to smallness of $\delta$, and $K$ is some positive constant. Now we use item 2 of Lemma \ref{lemsumaverages} in \eqref{eta-version3} twice. First we take $h_k(r)=e^{-\lambda\eps^2k}f'(r)$ and $g(\theta,r)=E_2(\theta,r)$, and after we take $h_k(r)=e^{-\lambda\eps^2k}f''(r)$ and $g(\theta,r)=v^2(\theta,r)$. Then, recalling also \eqref{errortermsum1-small}, equation \eqref{eta-version3} for $\eps^{-2(1-\beta)+\delta}\leq n_\beta\leq \eps^{-2(1-\beta)-\delta}$ yields:
\begin{equation}\label{eta-version4-small}
 \eta=f(r_0)+\eps\sum_{k=0}^{n_\beta-1}e^{-\lambda\eps^2k}f'(r_k)\omega_k\left[v(\theta_k,r_k)+\eps v_2(\theta_k,r_k)\right]+ \mathcal{O}(\eps^{2\beta+d}).
\end{equation} 

Now we focus on the case $n_\beta>\eps^{-2(1-\beta)-\delta}$. The last term in \eqref{eta-version3} can be bounded by:
\begin{equation}\label{errortermsum1-large}
\left|\sum_{k=0}^{n_\beta-1}\mathcal{O}(e^{-\lambda\eps^2k}\eps^{2+a})\right|\leq K\eps^{2+a}\sum_{k=0}^{n_\beta-1}e^{-\lambda\eps^2k}= K\eps^{2+a}
\frac{1-e^{-\lambda\eps^2n_\beta}}{1-e^{-\lambda\eps^2}} 
\leq K_\lb\eps^{a}, 
\end{equation}
for some positive constants $K$ and $K_\lambda$. Similarly 
to the previous case, we use item 1 of Lemma \ref{lemsumaverages} in \eqref{eta-version3} twice. First we take $h(r)=f'(r)$ and $g(\theta,r)=E_2(\theta,r)$, and after we take $h(r)=f''(r)$ and $g(\theta,r)=v^2(\theta,r)$. Using this and bound \eqref{errortermsum1-large} in equation \eqref{eta-version3}, we obtain the following bound for $n_\beta> \eps^{-2(1-\beta)-\delta}$:
\begin{equation}\label{eta-version4-large}
 \eta=f(r_0)+\eps\sum_{k=0}^{n_\beta-1}e^{-\lambda\eps^2k}f'(r_k)\omega_k\left[v(\theta_k,r_k)+\eps v_2(\theta_k,r_k)\right]+ \mathcal{O}(\eps^d).
\end{equation} 

Now we just need to note that since $\omega_k$ is independent of $r_k$ and $\theta_k$, we have for all $k\in\mathbb{N}$:
\begin{eqnarray*}
\E(\omega_kf'(r_k)[v(\theta_k,r_k)+\eps v_2(\theta_k,r_k)])&=&\\
\E(\omega_k)\E(f'(r_k)[v(\theta_k,r_k)+\eps v_2(\theta_k,r_k)])&=&0,
\end{eqnarray*}
because $\E(\omega_k)=0$. Thus, denoting 
$n_\eps=[\eps^{-\textb{2}(1-\beta)+\delta}]$, 
if we take expectations in \eqref{eta-version4-small} 
and \eqref{eta-version4-large} and use 
the total expectation formula, it is clear that:
\begin{eqnarray}\label{expzero}
 &&\E(\eta)-f(r_0) \nonumber\\
  =&\eps&\sum_{\substack{n\in\mathbb{N}\\n\geq n_\eps}}\E\left(\sum_{k=0}^{n-1}e^{-\lambda\eps^2k}f'(r_k)\omega_k\left[v(\theta_k,r_k)+\eps v_2(\theta_k,r_k\right])\right)\Prob\{n_\beta=n\}\ \ \ \nonumber\\
 +&\mathcal{O}&(\eps^{2\beta+d})
\Prob\{\eps^{-2(1-\beta)+\delta}\leq n_\beta\leq \eps^{-2(1-\beta)-\delta}\}+\mathcal{O}(\eps^d)\Prob\{n_\beta> \eps^{-2(1-\beta)-\delta}\}\nonumber\\
 =&\eps&\sum_{\substack{n\in\mathbb{N}\\n\geq n_\eps}}
\left(\sum_{k=0}^{n-1}e^{-\lambda\eps^2k}
\E(f'(r_k)\omega_k\left[v(\theta_k,r_k)+\eps v_2(\theta_k,r_k)\right])\right)\Prob\{n_\beta=n\} \ \ \ \\
 +&\mathcal{O}&(\eps^{2\beta+d})\Prob\{\eps^{-2(1-\beta)+\delta}\leq n_\beta\leq \eps^{-2(1-\beta)-\delta}\}+\mathcal{O}(\eps^d)\Prob\{n_\beta> \eps^{-2(1-\beta)-\delta}\}\nonumber\\
=&\mathcal{O}&(\eps^{2\beta+d})\Prob\{\eps^{-2(1-\beta)+\delta}\leq n_\beta\leq \eps^{-2(1-\beta)-\delta}\}+\mathcal{O}(\eps^d)\Prob\{n_\beta> \eps^{-2(1-\beta)-\delta}\}
 \nonumber
\end{eqnarray}

By Lemma \ref{main lemma} there exists 
a constant $a>0$ such that:
\begin{equation}\label{Prob-expsmall}
\Prob\{n_\beta>\eps^{-2(1-\beta)-\delta}\}=\mathcal{O}\left(e^{-\frac{a}{\eps^\delta}}\right).
\end{equation}
Clearly, if \eqref{Prob-expsmall} is true then $\Prob\{n_\beta>\eps^{-2(1-\beta)-\delta}\}$ is smaller than any order of $\eps$ and then \eqref{expzero} finishes the proof of the lemma.
 
To prove \eqref{Prob-expsmall}, let us define $n_\delta:=[\eps^{-2(1-\beta)-\delta}]$. Define also $n_i:=i[\eps^{-2(1-\beta)}]$, $i=0,\dots,[\eps^{-\delta}]$. Clearly, if $n_\beta>\eps^{-2(1-\beta)-\delta}$, then $|r_{n_{i+1}}-r_{n_i}|<2\eps^{\beta}$ for all $i$. In other words, we have that:
\begin{eqnarray}\label{prodrni}
\Prob\{n_\beta>\eps^{-2(1-\beta)-\delta}\}&\leq&\Prob\{|r_{n_{i+1}}-r_{n_i}|<2\eps^{\beta}\,\textrm{ for all }i=0,\dots,[\eps^{-\delta}]\}\nonumber\\
&=&\prod_{i=0}^{[\eps^{-\delta}]}\Prob\{|r_{n_{i+1}}-r_{n_i}|<2\eps^{\beta}\},
\end{eqnarray}
where in the last equality we have used that $r_{n_{i+1}}-r_{n_i}$ and $r_{n_{j+1}}-r_{n_j}$ are independent if $i\neq j$.

Now, take any $i$. Then:
$$r_{n_{i+1}}-r_{n_i}=\eps\sum_{i=n_i}^{n_{i+1}-1}\omega_kv(\theta_k,r_k)+\mathcal{O}(\eps^2(n_i-n_{i+1})).$$
Note that $\eps^2(n_i-n_{i+1})=\eps^2[\eps^{-2(1-\beta)}]\leq\eps^{2\beta}$. Thus:
\begin{equation}\label{sumdiff}
\eps\left|\sum_{i=n_i}^{n_{i+1}-1}\omega_kv(\theta_k,r_k)\right|-\mathcal{O}(\eps^{2\beta})\leq|r_{n_{i+1}}-r_{n_i}|.
\end{equation}
As a consequence, if $|r_{n_{i+1}}-r_{n_i}|\leq2\eps^{\beta}$ then $\eps\left|\sum_{i=n_i}^{n_{i+1}-1}\omega_kv(\theta_k,r_k)\right|\leq3\eps^{\beta}$. Indeed, if this latter inequality does not hold, 
then:
$$
\eps\left|\sum_{i=n_i}^{n_{i+1}-1}\omega_kv(\theta_k,r_k)\right|-\mathcal{O}(\eps^{2\beta})> 3\eps^{\beta}(1-\mathcal{O}(\eps^\beta))\geq 2\eps^{\beta}\geq|r_{n_{i+1}}-r_{n_i}|,
$$
which is a contraditciton with \eqref{sumdiff}. In other words:
$$
\Prob\{|r_{n_{i+1}}-r_{n_i}|<2\eps^\beta\}\leq 
\Prob\left\{\eps\left|\sum_{i=n_i}^{n_{i+1}-1}\omega_kv(\theta_k,r_k)\right|\leq3\eps^\beta\right\}.
$$
Now by Lemma \ref{main lemma} that $(n_{i+1}-n_i)^{-1/2}\sum_{k=n_i}^{n_{i+1}-1}\omega_kv(\theta_k,r_k)$ converges in distribution to $\xi\sim \mathcal{N}(0,c^2)$ for some $c>0$.

Thus, using that $n_{i+1}-n_i=[\eps^{-2(1-\beta)}]$, as $\eps\to0$ we obtain:
$$\Prob\left\{\eps\left|\sum_{i=n_i}^{n_{i+1}-1}\omega_kv(\theta_k,r_k)\right|\leq3\eps^\beta\right\}= \Prob\{\left|\xi\right|\leq3\}+o(1)\leq\rho<1,$$
for some constant $\rho>0$. Then, using this in \eqref{prodrni} we get:
$$\Prob\{n_\beta>\eps^{-2(1-\beta)-\delta}\}\leq\rho^{1/\eps^{\delta}}.$$
Defining $a=-\log\rho>0$ (because $\rho<1$) we obtain claim \eqref{Prob-expsmall}.
\end{proof}

\subsection{The Imaginary Rational case}\label{sec:IR-case}

In this section we deal with the imaginary rational case. The ideas are basically the same as in the TI case. Recall that  after performing the change to normal form the $n$-th iteration of our map can be written as:
\begin{equation}
\begin{array}{rcl}\label{eq:NRmap-n-IR}
\theta_n&=&\displaystyle\theta_0+nr_0+\mathcal{O}(n\eps),\\
r_n&=&\displaystyle r_0+\eps\sum_{k=0}^{n-1}\omega_k[v(\theta_k,r_k)+\eps v_2(\theta_k,r_k)]\\
&&+\eps^2\sum_{k=0}^{n-1}E_2(\theta_k,r_k)+\mathcal{O}(n\eps^{\textb{2+a}}),
\end{array}
\end{equation}
where $v_2(\theta,r)$ is a given function which can be written explicitly in terms of $v(\theta,r)$ and $S_1(\theta,r)$.

We also recall that given an imaginary rational strip $I_\beta$ there exists a unique $r^*\in I_\beta$, with $r^*=p/q$ and $|q|<\eps^{-b}$. Moreover, for all $r_0\in I_\beta$ we have $|r_0-r^*|\leq\eps^\beta.$ Then by \eqref{eq:NRmap-n-IR} for any $n\le n_\beta$ we have:
\begin{eqnarray}\label{approxIR}
\beal 
\theta_n&=&\theta_0+nr^*+\mathcal{O}(n\eps^\beta),
\\
r_n&=&r^*+\mathcal{O}(\eps^\beta).\qquad 
\enal
\end{eqnarray}
Define 
\begin{equation}\label{defg0-IR}
g_0(\theta,r)=\frac{1}{q}\sum_{i=0}^{q-1}g(\theta+ir,r).
\end{equation}

\begin{lem}\label{lemsumaverages-IR}
Let $\beta$ satisfy \eqref{conditionbeta}, and $b=(\beta-\rho)/2$ with $\rho$ satisfying \eqref{conditionrho}. Let $n_\beta$ be an exit time of the process $(\theta_n,r_n)$ defined by \eqref{eq:NRmap-n} from some bounded domain $I_\beta$. For all $l\geq1$ the following holds:
\begin{enumerate}
\item Given two $\mathcal{C}^l$ functions $h:\mathbb{R}\to\mathbb{R}$ and $g:\mathbb{T}\times\mathbb{R}\to\mathbb{R}$, there exists a constant $d>0$ such that:
$$\eps^2\sum_{k=0}^{n_\beta-1}e^{-\lambda\eps^2k}h(r_k)(g(\theta_k,r_k)-g_0(\theta_k,r_k))=\mathcal{O}(\eps^{d}).$$

\item If $n_\beta<\eps^{-2+\beta}$, then given a $\mathcal{C}^l$ function $g:\mathbb{T}\times\mathbb{R}\to\mathbb{R}$ and a collection of functions $h_k:\mathbb{R}\to\mathbb{R}$, with $\|h_k\|_{\mathcal{C}^0}\leq M$ and $\|h_{k+1}-h_k\|_{\mathcal{C}^0}\leq M\eps^2$ for all $k$, there exists a constant $d>0$ such that:
$$\eps^2\sum_{k=0}^{n_\beta-1}h_k(r_k)(g(\theta_k,r_k)-g_0(\theta_k,r_k))=\mathcal{O}(\eps^{2\beta+d}).
$$
\end{enumerate}
\end{lem}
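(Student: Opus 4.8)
The argument is parallel to that of Lemma~\ref{lemsumaverages}: the ergodization estimate of Lemma~\ref{lemmasigma2}, which there relied on a good rational approximation of a large-denominator rotation number, is now replaced by the fact that the leading angular rotation $\theta\mapsto\theta+p/q$ is \emph{exactly} $q$-periodic and that $g_0$ of \eqref{defg0-IR} is its orbit average; so the ``ergodization time'' $N\le\eps^{-A}$ of the totally irrational case is played here by the genuine period $q<\eps^{-b}$. First I would record from the normal-form iteration \eqref{eq:NRmap-n-IR} the one-step bounds $|\theta_{k+1}-\theta_k-r_k|=\mathcal{O}(\eps)$, $|r_{k+1}-r_k|=\mathcal{O}(\eps)$, and combine them with \eqref{approxIR} to see that over a block of $q$ consecutive iterates the orbit shadows the rational rotation, with error $\mathcal{O}(\eps^{\beta-b}+\eps^{1-2b})$ in $\theta$ and $\mathcal{O}(\eps^{1-b})$ in $r$.

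The heart of the proof is a transfer-function (cohomological) step, the exact-periodicity analogue of ergodization: a term-by-term comparison of a block sum with a sum over the ideal periodic orbit is too lossy, so instead one wants $g-g_0$ to be a coboundary. For each $r$ in the strip I would solve, in Fourier series, $G(\theta+r,r)-G(\theta,r)=g(\theta,r)-g_0(\theta,r)$; this is solvable since by construction $(g-g_0)^{(m)}(r)$ vanishes whenever $e^{2\pi imr}=1$, the solution being $G^{(m)}(r)=g^{(m)}(r)\bigl(1-D_m(r)\bigr)/(e^{2\pi imr}-1)$ with $D_m(r)=\tfrac1q\sum_{i=0}^{q-1}e^{2\pi imir}$. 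The key estimate is $\|G\|_{\mathcal{C}^1}\le Mq^{\,c}$ for an absolute $c$: for the non-resonant modes $q\nmid m$ of size $|m|\lesssim\eps^{-b}$ the divisor satisfies $|e^{2\pi imr}-1|\gtrsim q^{-1}$, because in an imaginary rational strip $p/q$ is the only rational of denominator below $\eps^{-b}$; for the resonant modes $q\mid m$ the numerator $1-D_m(r)$ vanishes at $r=p/q$ to the same order as the divisor, so $G^{(m)}$ stays bounded there; the higher modes are negligible by the Fourier decay of $\widehat g$ (or absent, if $g$ is a trigonometric polynomial). Telescoping then gives $(g-g_0)(\theta_k,r_k)=G(\theta_{k+1},r_{k+1})-G(\theta_k,r_k)+\mathcal{O}(q^{\,c}\eps)$, the error coming from applying $\|G\|_{\mathcal{C}^1}$ to the $\mathcal{O}(\eps)$ one-step discrepancies.

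Both items then follow by summation by parts, just as the analogous parts of Lemma~\ref{lemsumaverages} follow from Lemma~\ref{lemmasigma2}. For item~2, $\sum_{k=0}^{n_\beta-1}h_k(r_k)(g-g_0)(\theta_k,r_k)$ splits into a telescoping part, which collapses to $\mathcal{O}(q)$ plus $\sum_k(h_k-h_{k-1})G_k=\mathcal{O}(n_\beta\eps^2 q)$ using $\|h_{k+1}-h_k\|_{\mathcal{C}^0}\le M\eps^2$, together with the accumulated remainder $\mathcal{O}(n_\beta q^{\,c}\eps)$; multiplying by $\eps^2$ and using $q<\eps^{-b}$ and $n_\beta<\eps^{-2+\beta}$ yields $\mathcal{O}(\eps^{2-b}+\eps^{2+\beta-b}+\eps^{1+\beta-cb})$, which is $\mathcal{O}(\eps^{2\beta+d})$ with $d=1-\beta-cb>0$ since \eqref{conditionbeta} and \eqref{conditionrho} give $b<\beta/2$ and hence $cb<1-\beta$. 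For item~1, where $h_k=e^{-\lambda\eps^2k}h(r_k)$ is only $\mathcal{O}(\eps)$-slowly varying but carries a geometric weight with $\sum_k e^{-\lambda\eps^2k}=\mathcal{O}(\eps^{-2})$, the same summation by parts gives $\eps^2\cdot\mathcal{O}(q^{\,c}\eps^{-1})=\mathcal{O}(\eps^{1-cb})=\mathcal{O}(\eps^d)$. The incomplete final block, of fewer than $q$ terms, contributes only $\mathcal{O}(\eps^2q)=\mathcal{O}(\eps^{2-b})$, which is negligible.

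The step I expect to be the real obstacle is the bound $\|G\|_{\mathcal{C}^1}\le Mq^{\,c}$, i.e.\ the small-divisor bookkeeping: one must rule out anomalously small values of $e^{2\pi imr}-1$ over the whole strip for the modes that matter, treating separately the resonant modes $q\mid m$ (where a vanishing numerator has to be tracked to exactly matching order so that $G^{(m)}(r)$ does not blow up as $r\to p/q$), the genuinely non-resonant low modes (where the arithmetic of an imaginary rational strip forces $|e^{2\pi imr}-1|\gtrsim q^{-1}$), and the high modes (controlled by the $\mathcal{C}^l$-decay of $g$). Once $\|G\|_{\mathcal{C}^1}$ is under polynomial-in-$q$ control, everything else — the block decomposition, the shadowing estimates and the two summations by parts — is routine.
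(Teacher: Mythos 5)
Your proposal is correct, but it follows a genuinely different route from the paper's proof, and in fact a more complicated one. The paper treats this case entirely without a cohomological equation or any small-divisor analysis: it writes $n_\beta = P_\beta q + Q_\beta$, groups the sum into blocks of exactly $q$ consecutive iterates, and observes that inside each block the substitution $(\theta_{kq+j},r_{kq+j})\mapsto(\theta_{kq}+jr^*,r^*)$ (costing only $\mathcal{O}(\eps^{\beta-b})$ per term, by \eqref{approxIR} and $j\le q<\eps^{-b}$) makes the block sum vanish \emph{identically}, because $g_0(\theta+jr^*,r^*)=g_0(\theta,r^*)$ for all $j$ when $r^*=p/q$ — that is, $g_0$ is precisely the orbital average along the exact $q$-periodic rotation, so
$\sum_{j=0}^{q-1}\bigl(g(\theta_{kq}+jr^*,r^*)-g_0(\theta_{kq}+jr^*,r^*)\bigr)=0$ with no remainder at all. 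What is then left is just the bookkeeping of the shadowing errors and the incomplete final block, which is immediate. Your transfer-function argument — solving $G(\theta+r,r)-G(\theta,r)=g(\theta,r)-g_0(\theta,r)$, verifying polynomial-in-$q$ bounds on $\|G\|_{\mathcal{C}^1}$ by splitting modes into resonant ($q\mid m$, where $1-D_m$ and $e^{2\pi imr}-1$ vanish to matching first order), genuinely non-resonant low modes (where the imaginary-rational arithmetic gives $|e^{2\pi imr}-1|\gtrsim q^{-1}$), and high modes (controlled by $\mathcal{C}^l$-decay), then telescoping and summing by parts — is also valid and is the natural Gottschalk--Hedlund-style generalization one would reach for if $g_0$ were not an exact orbital average. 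It buys robustness (it would work for nearly-rational $r$ without introducing $r^*$ at all) at the price of the small-divisor lemma you correctly identify as the crux. The paper's choice to compare against the exact rational $r^*$ eliminates that difficulty entirely, which is why it is preferred here; your proof works but is longer and harder to make airtight.
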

\begin{proof}

Let us start with item 1. Write $n_\beta=P_\beta q+Q_\beta$, for some integers $P_\beta$ and  $0\le Q_\beta<q$. Then:
\begin{eqnarray}\label{rewritePbQb-IR}
\beal 
&&\eps^2\left|\sum_{k=0}^{n_\beta-1}e^{-\lambda\eps^2k}h(r_k)(g(\theta_k,r_k)-g_0(\theta_k,r_k))\right| 
\qquad \qquad \qquad \qquad \\
&\leq&\eps^2\left|\sum_{k=0}^{P_\beta-1}\sum_{j=0}^{q-1}
e^{-\lambda\eps^2(kq+j)}h(r_{kq+j})(g(\theta_{kq+j},r_{kq+j})-g_0(\theta_{kq+j},r_{kq+j}))\right|\qquad \\
&+&\eps^2\left|\sum_{j=0}^{Q_\beta-1}e^{-\lambda\eps^2(P_\beta q+j)}h(r_{P_\beta q+j})(g(\theta_{P_\beta q+j},r_{P_\beta q+j})-g_0(\theta_{P_\beta q+j},r_{P_\beta q+j}))\right|.\\
\enal 
\end{eqnarray}
On the one hand, we note that by \eqref{approxIR}
and $j\leq q<\eps^{-b}$ we have 
$$\theta_{kq+j}=\theta_{kq}+jr^*+\mathcal{O}(\eps^{\beta-b}),$$
$$r_{kq+j}=r^*+\mathcal{O}(\eps^\beta).
$$
Then for all $k\leq P_\beta$:
\begin{eqnarray*}
&&e^{-\lambda\eps^2(kq+j)}
h(r_{kq+j})(g(\theta_{kq+j},r_{kq+j})-g_0(\theta_{kq+j},r_{kq+j}))\\
&=&e^{-\lambda\eps^2kq}h(r_{kq})(g(\theta_{kq}+jr^*,r^*)-g_0(\theta_{kq}+jr^*,r^*))
+\mathcal{O}(e^{-\lambda\eps^2kq}\eps^{1-b}).
\end{eqnarray*}
Then:
\begin{eqnarray}\label{boundPbeta-IR-preliminary}
 && \eps^2\left|\sum_{k=0}^{P_\beta-1}\sum_{j=0}^{q-1}e^{-\lambda\eps^2(kq+j)}
h(r_{kq+j})(g(\theta_{kq+j},r_{kq+j})-g_0(\theta_{kq+j},r_{kq+j}))\right|\nonumber\\
&\leq&\eps^2\left|\sum_{k=0}^{P_\beta-1}e^{-\lambda\eps^2kq}
h(r_{kN})\sum_{j=0}^{q-1}(g(\theta_{kq}+jr^*,r^*)-g_0(\theta_{kq}+jr^*,r^*))\right|\nonumber\\
&&+K\eps^{3-2b}\sum_{k=0}^{P_\beta-1}e^{-\lambda\eps^2kq}.
\end{eqnarray}
Now, recalling that $r^*=p/q$, by the definition \eqref{defg0-IR} of $g_0(\theta,r)$ for all $k<P_\beta$ we have:
\begin{equation}
\sum_{j=0}^{q-1}(g(\theta_{kq}+jr^*,r^*)-g_0(\theta_{kq}+jr^*,r^*))=\sum_{j=0}^{q-1}g(\theta_{kq},r^*)-qg_0(\theta_{kq},r^*)=0.\nonumber
\end{equation}
Moreover:
\begin{equation}
K\eps^{3-2b}\sum_{k=0}^{P_\beta-1}e^{-\lambda\eps^2kq}\leq K_\lambda \ \eps^{1-2b}. \nonumber 
\end{equation}
Using these estimates 
\eqref{boundPbeta-IR-preliminary} yields:
\begin{equation}\label{boundPbeta-IR}
\beal 
\eps^2\left|\sum_{k=0}^{P_\beta-1}
\sum_{j=0}^{q-1}e^{-\lambda\eps^2(kq+j)}h(r_{kq+j})(g(\theta_{kq+j},r_{kq+j})-g_0(\theta_{kq+j},r_{kq+j}))\right|\leq
\\ K\,\eps^{1-2b}.\qquad \qquad \qquad \qquad 
\qquad \qquad \qquad 
\enal 
\end{equation}
Note that $1-2b=1-\beta+\rho>0$, since $\beta<1$ and $\beta-2b=\rho>0$.

On the other hand, we have:
\begin{eqnarray}\label{boundQbeta-IR}
&&\eps^2\left|\sum_{j=0}^{Q_\beta-1}
e^{-\lambda\eps^2(P_\beta N+j)}h(r_{P_\beta N+j})(g(_{P_\beta N+j},r_{P_\beta N+j})-g_0(r_{P_\beta N+j}))\right|\nonumber\\
&&\leq \eps^2K\sup_{(\theta,r)\in I_\beta}|h(r)(g(\theta,r)-g_0(\theta,r))|Q_\beta \leq \tilde K \eps^{2-b}. 
\end{eqnarray}
Clearly, $2-b>0$. Substituting \eqref{boundPbeta-IR} and \eqref{boundQbeta-IR} in \eqref{rewritePbQb-IR} yields item 1 of the Lemma.

Now let us consider item 2. If we take equation \eqref{boundPbeta-IR-preliminary} and substitute $e^{-\lambda\eps^2k}h(r_k)$ by $h_k(r_k)$, we can write for all $k\leq P_\beta$:
\begin{eqnarray}\label{boundPbeta-IR-preliminary-item2}
 && \eps^2\left|\sum_{k=0}^{P_\beta-1}\sum_{j=0}^{q-1}h_{kq+j}(r_{kq+j})(g(\theta_{kq+j},r_{kq+j})-g_0(\theta_{kq+j},r_{kq+j}))\right|\\
&\leq&\eps^2\left|\sum_{k=0}^{P_\beta-1}h_{kq}(r_{kq})\sum_{j=0}^{q-1}(g(\theta_{kq}+jr^*,r^*)-g_0(\theta_{kq}+jr^*,r^*))\right|+K\eps^{3-2b}P_\beta.\nonumber
\end{eqnarray}
Again:
$$\sum_{j=0}^{q-1}(g(\theta_{kq}+jr^*,r^*)-g_0(\theta_{kq}+jr^*,r^*))=0,$$
and since $P_\beta<n_\beta<\eps^{-2(1-\beta)-\delta}$
and $b=(\beta-\rho)/2$
$$K\,\eps^{3-2b} P_\beta\leq 
K\,\eps^{1+2\beta-2b}\leq 
K\eps^{1+\rho+\beta}.
$$
Then we have:
\begin{equation}\label{boundPbeta-IR-item2}
\beal 
\eps^2\left|\sum_{k=0}^{P_\beta-1}\sum_{j=0}^{q-1}e^{-\lambda\eps^2(kq+j)}h(r_{kq+j})(g(\theta_{kq+j},r_{kq+j})-g_0(\theta_{kq+j},r_{kq+j}))\right|\leq \\
 K\,\eps^{1+\rho+\beta}.
\qquad \qquad \qquad \qquad \qquad \qquad 
\enal 
\end{equation}

On the other hand we have:
\begin{eqnarray}\label{boundQbeta-IR-item2}
&&\eps^2\left|\sum_{j=0}^{Q_\beta-1}
h_{P_\beta N+j}(r_{P_\beta N+j})(g(_{P_\beta N+j},r_{P_\beta N+j})-g_0(r_{P_\beta N+j}))\right|\nonumber\\
&&\leq \eps^2KQ_\beta \leq \tilde K \eps^{2-b}. 
\end{eqnarray}
We note that $2-b-2\beta>0$ for $b=(\beta-\rho)/2$ and $\beta<4/5$ and that $1-\beta>0$ if $\beta<1$. Thus, taking $\beta<4/5$ bounds \eqref{boundPbeta-IR-item2} and \eqref{boundQbeta-IR-item2} prove item 2 of the Lemma with $d=\min\{2-b-2\beta,1-\beta,\rho\}>0$.
\end{proof}

Let $r_0$ belong to the IR case. Consider an interval $I_\beta=\{(\theta,r)\in\mathbb{T}\times\mathbb{R}\,:\,|r-r_0|\leq\eps^\beta\}$, for some $0<\beta<4/5$. Denote $n_\beta\in\mathbb{N}$ the exit time from $I_\beta$, that is the first number such that $(\theta_{n_\beta},r_{n_\beta})\not\in I_\beta$. 

\begin{lem}\label{lemmaexpectation-IR}
Let $f:\mathbb{R}\rightarrow\mathbb{R}$ be any $\mathcal{C}^l$ function with $l\ge \textr{3}$. Then, there exists $d>0$ such that 
for all $\lambda>0$ one has:
\begin{eqnarray*}
 &&\E\left(e^{-\lambda\eps^2n_\beta}f(r_{n_\beta})+\right.
\\ & & \left. \eps^2
\sum_{k=0}^{n_\beta-1}e^{-\lambda\eps^2k}\left[\lambda f(r_k)-\left(b(\theta_k,r_k)f'(r_k)+\frac{\sigma^2(\theta_k,r_k)}{2}f''(r_k)\right)\right]\right)\\
&&-f(r_0)=\mathcal{O}(\eps^{2\beta+d}),
\end{eqnarray*}
where:
$$b(\theta,r)=\frac{1}{q}\sum_{i=0}^{q-1}E_2(\theta+ir,r),\qquad \sigma^2(\theta,r)=\frac{1}{q}\sum_{i=0}^{q-1}v^2(\theta+ir,r).$$
\end{lem}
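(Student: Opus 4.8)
The plan is to follow the proof of Lemma~\ref{lemmaexpectation} almost line by line, the one structural change being that the circle average $\int_0^1 g(\theta,r)\,d\theta$ is everywhere replaced by the finite orbit average $g_0(\theta,r)=\frac1q\sum_{i=0}^{q-1}g(\theta+ir,r)$ of \eqref{defg0-IR}, and Lemma~\ref{lemsumaverages-IR} is invoked wherever the totally irrational argument used Lemma~\ref{lemsumaverages}. Concretely I would set
$$
\eta=e^{-\lambda\eps^2n_\beta}f(r_{n_\beta})+\eps^2\sum_{k=0}^{n_\beta-1}e^{-\lambda\eps^2k}\left[\lambda f(r_k)-\left(b(\theta_k,r_k)f'(r_k)+\frac{\sigma^2(\theta_k,r_k)}{2}f''(r_k)\right)\right],
$$
with $b,\sigma^2$ the $\theta$-dependent coefficients in the statement, so that the claim becomes $\E(\eta)-f(r_0)=\mathcal{O}(\eps^{2\beta+d})$. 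Expanding $e^{-\lambda\eps^2n_\beta}f(r_{n_\beta})=f(r_0)+\sum_{k=0}^{n_\beta-1}(e^{-\lambda\eps^2(k+1)}f(r_{k+1})-e^{-\lambda\eps^2k}f(r_k))$, Taylor expanding each term to second order, and inserting the increments $r_{k+1}-r_k=\eps\omega_k[v(\theta_k,r_k)+\eps v_2(\theta_k,r_k)]+\eps^2E_2(\theta_k,r_k)+\mathcal{O}(\eps^{2+a})$ and $(r_{k+1}-r_k)^2=\eps^2v^2(\theta_k,r_k)+\mathcal{O}(\eps^3)$ read off from \eqref{eq:NRmap-n-IR}, one arrives, exactly as at \eqref{eta-version3}, at $\eta=f(r_0)$ plus the martingale term $\eps\sum_k e^{-\lambda\eps^2k}f'(r_k)\omega_k[v(\theta_k,r_k)+\eps v_2(\theta_k,r_k)]$, plus $\eps^2\sum_k e^{-\lambda\eps^2k}f'(r_k)[E_2(\theta_k,r_k)-b(\theta_k,r_k)]$, plus $\frac{\eps^2}{2}\sum_k e^{-\lambda\eps^2k}f''(r_k)[v^2(\theta_k,r_k)-\sigma^2(\theta_k,r_k)]$, plus a remainder $\sum_k\mathcal{O}(e^{-\lambda\eps^2k}\eps^{2+a})$.

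The crucial observation is that, by the definition \eqref{eq:drift} of $E_2$ together with the definitions of $b,\sigma^2$ in the statement, one has $b(\theta,r)=(E_2)_0(\theta,r)$ and $\sigma^2(\theta,r)=(v^2)_0(\theta,r)$ in the sense of \eqref{defg0-IR}; hence the two $\eps^2$-sums are precisely of the form controlled by Lemma~\ref{lemsumaverages-IR}. I would apply its item~2 --- once with $h_k=e^{-\lambda\eps^2k}f'$, $g=E_2$, and once with $h_k=e^{-\lambda\eps^2k}f''$, $g=v^2$ --- on the middle range $\eps^{-2(1-\beta)+\delta}\le n_\beta\le\eps^{-2(1-\beta)-\delta}$ (which sits inside the hypothesis range $n_\beta<\eps^{-2+\beta}$ of that item as soon as $\delta<\beta$), and its item~1 on the large range $n_\beta>\eps^{-2(1-\beta)-\delta}$; the leftover $\sum_k\mathcal{O}(e^{-\lambda\eps^2k}\eps^{2+a})$ is estimated as in \eqref{errortermsum1-small}--\eqref{errortermsum1-large}. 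This reduces $\eta$ to $f(r_0)+(\text{martingale term})+\mathcal{O}(\eps^{2\beta+d})$ on the middle range and to $f(r_0)+(\text{martingale term})+\mathcal{O}(\eps^{d})$ on the large range. Taking expectations, the martingale term drops out: $(\theta_k,r_k)$ is a function of $\omega_0,\dots,\omega_{k-1}$ only, $\omega_k$ is independent of it with $\E(\omega_k)=0$, so every $\E(\omega_k f'(r_k)[v(\theta_k,r_k)+\eps v_2(\theta_k,r_k)])=0$ --- equivalently the partial sums are a martingale for the natural filtration and $n_\beta$ a stopping time, so conditioning on $\{n_\beta=n\}$ and summing annihilates the term exactly as in \eqref{expzero}. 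Using the law of total expectation, discarding the short range $n_\beta<\eps^{-2(1-\beta)+\delta}$ by \eqref{Prob0} and the large range by \eqref{Prob-expsmall}, one obtains $\E(\eta)-f(r_0)=\mathcal{O}(\eps^{2\beta+d})$.

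The only ingredient that is not a verbatim transcription of the totally irrational case --- and the step I expect to demand the most care --- is the pair of exit-time tail bounds \eqref{Prob0} and \eqref{Prob-expsmall}, which there followed from Lemma~\ref{main lemma}. The same scheme works here: the product estimate \eqref{prodrni} together with the central limit theorem for $(n_{i+1}-n_i)^{-1/2}\sum_k\omega_k v(\theta_k,r_k)$ gives the bounds, provided the limiting variance does not vanish. That variance is now governed by $\sigma^2(\theta,p/q)=\frac1q\sum_{i=0}^{q-1}v^2(\theta+ip/q,p/q)$, whose non-vanishing at every imaginary rational $p/q$ is exactly Lemma~\ref{lem:sg-nonvanish}. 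I expect this point, together with the bookkeeping of which item of Lemma~\ref{lemsumaverages-IR} governs which range of $n_\beta$ --- so that the final error is genuinely $\mathcal{O}(\eps^{2\beta+d})$ for a uniform $d>0$ --- to be the main obstacle, but no new idea is needed beyond those already deployed for the totally irrational strips.
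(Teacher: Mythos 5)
Your proposal matches the paper's proof essentially line by line: you define the same $\eta$, perform the same Taylor expansion and substitution of increments from \eqref{eq:NRmap-n-IR}, correctly observe that $b(\theta,r)=(E_2)_0(\theta,r)$ and $\sigma^2(\theta,r)=(v^2)_0(\theta,r)$ in the sense of \eqref{defg0-IR}, apply item 2 and item 1 of Lemma \ref{lemsumaverages-IR} on the middle and large ranges of $n_\beta$ respectively, annihilate the martingale term by independence of $\omega_k$, and control the short and long tails of $n_\beta$ via Lemma \ref{main lemma}. Your remark that the central-limit argument requires the non-vanishing of $\sigma^2(\theta,p/q)$, which is precisely Lemma \ref{lem:sg-nonvanish}, is the correct identification of what is genuinely new in the imaginary-rational case relative to the totally-irrational one.
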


\begin{proof}
Let us fix any $0<\delta<1/6$. Again, denoting:
\begin{equation}\label{defeta-IR}
\beal 
\eta=e^{-\lambda\eps^2n_\beta}f(r_{n_\beta})&+&
\\
\eps^2
\sum_{k=0}^{n_\beta-1}e^{-\lambda\eps^2k}&&\left[\lambda f(r_k)-\left(b(\theta_k,r_k)f'(r_k)+\frac{\sigma^2(\theta_k,r_k)}{2}f''(r_k)\right)\right]
\enal 
\end{equation}
and using the law of total expectation, we have:
{\small
\begin{eqnarray}\label{lawtotalexp}
\beal 
\E\left(\eta\right)&=
\E\left(\eta\,|\,\eps^{-(1-\beta)+\delta}\leq n_\beta\leq \eps^{-2(1-\beta)-\delta}\right)
\Prob\{\eps^{-\textb{2}(1-\beta)+\delta}\leq n_\beta\leq  \eps^{-2(1-\beta)-\delta}\}\\
&+\E\left(\eta\,|\,n_\beta< \eps^{-(1-\beta)+\delta}\right)
\Prob\{n_\beta<
\eps^{-\textb{2}(1-\beta)+\delta}\}\\
&+\E\left(\eta\,|\,n_\beta> \eps^{-2(1-\beta)-\delta}\right)
\Prob\{n_\beta>\eps^{-2(1-\beta)-\delta}\}.
\enal
\end{eqnarray}}
As in the proof of Lemma \ref{lemmaexpectation}, for $\eps$ sufficiently small by Lemma \ref{main lemma} and some 
$C>0$ independent of $\eps$ we have 
$$
\Prob\{n_\beta<\eps^{-2(1-\beta)+\delta}\}\le 
\exp\left(-\frac{C}{\eps^{2\delta}}\right),
$$
and, thus, \eqref{lawtotalexp} yields:
\begin{eqnarray}\label{lawtotalexp-v2}
\E\left(\eta\right)=
\E(\eta\,|\,\eps^{-(1-\beta)+\delta}\leq n_\beta\leq \eps^{-2(1-\beta)-\delta})
\Prob\{\eps^{-(1-\beta)+\delta}\leq n_\beta\leq \eps^{-2(1-\beta)-\delta}\} \nonumber \\
+\E\left(\eta\,|\,n_\beta> \eps^{-2(1-\beta)-\delta}\right)
\Prob\{n_\beta>\eps^{-2(1-\beta)-\delta}\}.\qquad \qquad 
\qquad \qquad \qquad 
\end{eqnarray}

Now, we can write:
\begin{eqnarray*}
e^{-\lambda\eps^2n_\beta}f(r_{n_\beta})&=&f(r_0)+\sum_{k=0}^{n_\beta-1}\left[-\lambda\eps^2 e^{-\lambda\eps^2k}f(r_k)+e^{-\lambda\eps^2k}f'(r_k)(r_{k+1}-r_k)\right.\\
&&\left.+\frac{1}{2}e^{-\lambda\eps^2k}f''(r_k)(r_{k+1}-r_k)^2+\mathcal{O}(e^{-\lambda\eps^2k}\eps^3)\right],
\end{eqnarray*}
and then \eqref{defeta-IR} can be rewritten as:
\begin{eqnarray}\label{eta-version2-IR}
\beal
 \eta=f(r_0)+&&
\\ 
\sum_{k=0}^{n_\beta-1}&&\left[e^{-\lambda\eps^2k}f'(r_k)(r_{k+1}-r_k)+\frac{1}{2}e^{-\lambda\eps^2k}f''(r_k)(r_{k+1}-r_k)^2\right]\\\
 -\eps^2\sum_{k=0}^{n_\beta-1}
e^{-\lambda\eps^2k}&&\left[b(\theta_k,r_k)f'(r_k)+
\frac{\sigma^2(\theta_k,r_k)}{2}f''(r_k)\right]+\sum_{k=0}^{n_\beta-1}\mathcal{O}(e^{-\lambda\eps^2k}\eps^3).\\
\enal
\end{eqnarray}
Now, using \eqref{eq:NRmap-n-IR} we have:
$$ r_{k+1}-r_k=\eps\omega_k[v(\theta_k,r_k)+\eps v_2(\theta_k,r_k)]+\eps^2E_2(\theta_k,r_k)+
\mathcal{O}(\eps^{2+a}),$$
and:
$$(r_{k+1}-r_k)^2=\eps^2v^2(\theta_k,r_k)+\mathcal{O}(\eps^3).$$
Thus, \eqref{eta-version2-IR} writes out as:
\begin{eqnarray}\label{eta-version3-IR}
 \eta&=&f(r_0)+\sum_{k=0}^{n_\beta-1}e^{-\lambda\eps^2k}f'(r_k)\eps\omega_k\left[v(\theta_k,r_k)+\eps v_2(\theta_k,r_k)\right]\nonumber\\
 &&+\eps^2\sum_{k=0}^{n_\beta-1}e^{-\lambda\eps^2k}f'(r_k)\left[E_2(\theta_k,r_k)-b(\theta_k,r_k)\right]\nonumber\\
 &&+\frac{\eps^2}{2}\sum_{k=0}^{n_\beta-1}e^{-\lambda\eps^2k}f''(r_k)\left[v^2(\theta_k,r_k)-\sigma^2(\theta_k,r_k)\right]\nonumber\\
 &&+\sum_{k=0}^{n_\beta-1}\mathcal{O}(e^{-\lambda\eps^2k}\eps^{\textb{2+a}}).
\end{eqnarray}

Now we distinguish between the cases 
$\eps^{-2(1-\beta)+\delta}\leq n_\beta\leq \eps^{-2(1-\beta)-\delta}$ and $n_\beta>\eps^{-2(1-\beta)-\delta}$. 
We shall start assuming 
that $\eps^{-2(1-\beta)+\delta}\leq n_\beta\leq \eps^{-2(1-\beta)-\delta}$. As in the proof of Lemma \ref{lemmaexpectation} we have: 
\begin{equation}\label{errortermsum1-IR}
\left|\sum_{k=0}^{n_\beta-1}\mathcal{O}(e^{-\lambda\eps^2k}\eps^{\textb{2+a}})\right|\leq K\eps^{\textb{2+a}}n_\beta\leq K\eps^{2\beta+\textb{a}-\delta}.
\end{equation}
We note that $1/6-\delta>0$ since we have taken $\delta<1/6$. Now we use item 2 of Lemma \ref{lemsumaverages-IR} in \eqref{eta-version3-IR} twice, taking first $h_k(r)=e^{-\lambda\eps^2k}f'(r)$ and $g(\theta,r)=E_2(\theta,r)$, and after $h_k(r)=e^{-\lambda\eps^2k}f''(r)$ and $g(\theta,r)=v^2(\theta,r)$. Then, using also \eqref{errortermsum1-IR}, equation \eqref{eta-version3-IR} yields:
\begin{equation}\label{eta-version4-IR}
 \eta=f(r_0)+\sum_{k=0}^{n_\beta-1}e^{-\lambda\eps^2k}f'(r_k)\eps\omega_k\left[v(\theta_k,r_k)+\eps v_2(\theta_k,r_k)\right]+ \mathcal{O}(\eps^{2\beta+d}),
\end{equation}
for some suitable $d>0$. 

Now turn to the case $n_\beta>\eps^{-2(1-\beta)-\delta}$. The last term in \eqref{eta-version3-IR} can be bounded by:
\begin{equation}\label{errortermsum1-large-IR}
\left|\sum_{k=0}^{n_\beta-1}\mathcal{O}(e^{-\lambda\eps^2k}\eps^{2+a})\right|\leq K\eps^{\textb{1+a}}\sum_{k=0}^{n_\beta-1}
e^{-\lambda\eps^2k}= K\eps^{\textb{1+a}}\frac{1-e^{-\lambda\eps^2n_\beta}}{1-e^{-\lambda\eps^2}} \leq K_\lb\eps^{\textb{a}}, 
\end{equation}
for some positive constants $K$ and $K_\lambda$. Then, using item 1 of Lemma \ref{lemsumaverages-IR} twice (first with $h(r)=f'(r)$ and $g(\theta,r)=E_2(\theta,r)$, and later with $h(r)=f''(r)$ and $g(\theta,r)=v^2(\theta,r)$), we obtain the following bound for $n_\beta> \eps^{-2(1-\beta)-\delta}$:
\begin{equation}\label{eta-version4-large-IR}
 \eta=f(r_0)+\eps\sum_{k=0}^{n_\beta-1}e^{-\lambda\eps^2k}f'(r_k)\omega_k\left[v(\theta_k,r_k)+\eps v_2(\theta_k,r_k)\right]+ \mathcal{O}(\eps^d).
\end{equation} 

To finish the proof, we follow the same steps as in the proof of Lemma \ref{lemmaexpectation} and \eqref{lawtotalexp-v2} yields:
\begin{eqnarray*}
 &&\E(\eta)-f(r_0)\\
  &=&\sum_{n\in\mathbb{N}}\E\left(\sum_{k=0}^{n-1}e^{-\lambda\eps^2k}f'(r_k)\eps\omega_k\left[v(\theta_k,r_k)+\eps v_2(\theta_k,r_k\right])\right)\Prob\{n_\beta=n\}\\
 &+&\mathcal{O}(\eps^{2\beta+d})\Prob\{\eps^{-(1-\beta)+\delta}\leq n_\beta\leq \eps^{-2(1-\beta)-\delta}\}+\mathcal{O}(\eps^d)\Prob\{n_\beta<\eps^{-(1-\beta)+\delta}\}\\
&=&\mathcal{O}(\eps^{2\beta+d}),
\end{eqnarray*}
where by Lemma \ref{main lemma} for some $C>0$ 
independent of $\eps$ 
$$\Prob\{n_\beta>\eps^{-2(1-\beta)-\delta}\}=\exp
\left(-\frac{C}{\eps^{2\delta}}\right),
$$
so that it is smaller than any power of $\eps$.
\end{proof}

\subsection{The Real Rational case}\label{sec:RR-case}

Here we study the system in the RR case, in the subdomain:
$$|r-p/q|\leq K_1\eps^{1/2}.$$

\begin{figure}[h]
  \begin{center}
  \includegraphics[width=10cm]{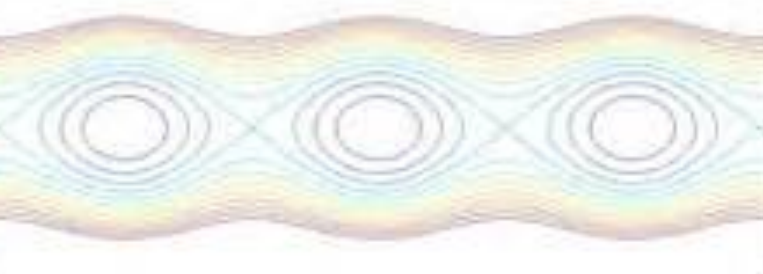}
  \end{center}
  \caption{Level sets of the pendulum $p/q=1/3$.}
  \label{fig:rotor-pendulum}
\end{figure}

From the Normal Form Theorem, in the Real Rational strips the system takes the following form:
\begin{equation}\label{thetarRR-pre}
\beal 
\theta_1&=&\theta_0+r_0+\eps\left[\Eu(\theta_0,p/q)-  \Ev(\theta_0,p/q)+\Ev_{p,q}(\theta_0,p/q)+E_3(\theta_0)\right] \\ 
&+&\eps\omega_0u(\theta_0,p/q)+\mathcal{O}(\eps^{1+a}),
\qquad \qquad \qquad \qquad \qquad \qquad \qquad \qquad 
\\
 r_1&=&r_0+\eps\Ev_{p,q}(\theta_0,r_0) +  
\eps\omega_0v(\theta_0,r_0)\ 
\qquad \qquad \qquad \qquad \qquad \qquad \quad 
 \\
&+&\eps^{3/2}\omega_0v_2(\theta_0,r_0)+
\eps^{3/2}E_4(\theta_0,r_0)+\mathcal{O}(\eps^{2+a}),
\qquad \qquad 
\qquad \qquad \quad 
\enal
\end{equation}
where $v_2(\theta,r)$ can be written explicitly in terms of $v(\theta,r)$ and $S_1(\theta,r)$. The function $E_4$ 
is such that $\|E_4\|_{\mathcal{C}^0}\leq K.$ We point out that it is a rescaled version of the function $E_4$ appearing in the Normal Form Theorem. 

Recall also that:
$$\max \{\|\Eu\|_{\mathcal{C}^0},\ \|\Ev\|_{\mathcal{C}^0},\ 
\|\Ev_{p,q}\|_{\mathcal{C}^0},\ \|E_3\|_{\mathcal{C}^0},\ 
\|u\|_{\mathcal{C}^0},\ \|v\|_{\mathcal{C}^0}\}\ \leq K.
$$
Moreover, we have defined $v_2$ also in such a way that 
$\|v_2\|_{\mathcal{C}^0}\leq K.$

First, we switch to the resonant variable:
$$ 
\hat r=r-p/q,\qquad 0\leq|\hat r|\leq K_1\eps^{1/2}.
$$
With this new variable, system \eqref{thetarRR-pre} writes out as:
{\small 
\begin{equation}\label{thetarRR}
\beal 
 \theta_1&=&\theta_0+p/q+\eps\left[\Eu(\theta_0,p/q)-\Ev(\theta_0,p/q)+\Ev_{p,q}(\theta_0,p/q)+E_3(\theta_0)\right]\\
 &+&\hat r_0+\eps\omega_0u(\theta_0,p/q)
+\mathcal{O}(\eps^{1+a}), \quad \qquad \qquad  
\qquad \qquad \qquad \qquad \qquad  \\
 \hat r_1&=&\hat r_0+\eps\hatEv_{p,q}(\theta_0,\hat r_0)+\eps^{3/2}\hat E_4(\theta_0,\hat r_0)+\eps\omega_0\hat v(\theta_0,\hat r_0)\qquad \qquad \qquad \quad \\
 &+&\eps^{3/2}\omega_0\hat v_2(\theta_0,\hat r_0)+\mathcal{O}(\eps^{2+a}), \qquad \qquad \qquad \qquad \qquad \qquad \qquad 
\qquad 
\enal
\end{equation}}
where:
\begin{equation*}
\begin{array}{rclcrcl}
\hat v(\theta_0,\hat r_0)&=&v(\theta_0,\hat r_0+p/q),&\quad&\hat v_2(\theta_0,\hat r_0)&=&v_2(\theta_0,\hat r_0+p/q),\\
\hatEv_{p,q}(\theta_0,\hat r_0)&=&\Ev_{p,q}(\theta_0,\hat r_0+p/q),&\quad& \hat E_4(\theta_0,\hat r_0)&=&E_4(\theta_0,\hat r_0+p/q),
\end{array}
\end{equation*}
From now on, we will abuse notation and drop all hats. We are interested in the $q-$th iteration of map \eqref{thetarRR}, which is given by:
\begin{equation*}
\begin{array}{rcl}
 \theta_q&=&\displaystyle\theta_0+qr_0+\\
&+&\displaystyle
\eps\sum_{k=0}^{q-1}\left[\Eu(\theta_k,p/q)-\Ev(\theta_k,p/q)+
(q-k)\Ev_{p,q}(\theta_k,p/q)+E_3(\theta_k)\right]\medskip\\
 \displaystyle
&+&\eps\sum_{k=0}^{q-1}\omega_k\left[u(\theta_k, p/q)+v(\theta_k, p/q)\right]+\mathcal{O}(\eps^{1+a}),\bigskip\\
  r_q&=& \displaystyle r_0+\eps\sum_{k=0}^{q-1}\left[\Ev_{p,q}(\theta_k, r_k)+\eps^{1/2} E_4(\theta_k, r_k)\right]\medskip\\
\displaystyle
&+&
\eps\sum_{k=0}^{q-1}\omega_k\left[v(\theta_k, r_k)+\eps^{1/2} v_2(\theta_k, r_k)\right]+\mathcal{O}(\eps^{2+a}).
 \end{array}
\end{equation*}
Taking into account that $q$ is bounded for $0\ge i\leq q$ we have:
$$\theta_i=\theta_0+i(p/q+r_0)+\mathcal{O}(\eps),\qquad r_i=r_0+\mathcal{O}(\eps),$$
we can rewrite the last system as:
\begin{equation}\label{thetarRR-q}
\beal
 \theta_q&=&\displaystyle\theta_0+qr_0+
\eps\Eu^{(q)}(\theta_0)+\eps u^{(q)}(\theta_0,\omega_0^q)
+\mathcal{O}(\eps^{1+a}),\qquad \qquad \qquad \qquad \bigskip\\
  r_q&=& \displaystyle r_0+\eps \Ev^{(q)}(\theta_0,r_0,\eps)+\eps v^{(q)}(\theta_0,r_0,\omega_0^q)+\eps^{3/2} v_2^{(q)}(\theta_0,r_0,\omega_0^q)+\mathcal{O}(\eps^2).
\enal
\end{equation}
where $\omega_k^q=(\omega_{qk},\dots,\omega_{qk+q-1})$ and:
\begin{eqnarray*}
\Eu^{(q)}(\theta)&=&\sum_{i=0}^{q-1}\left[\Eu(\theta+ip/q,p/q)-
\Ev(\theta+ip/q,p/q)+\right.\\
&+&\left.(q-i)\Ev_{p,q}(\theta+ip/q,p/q)+E_3(\theta+ip/q)\right],\\
 u^{(q)}(\theta,\omega_k^q)&=&
\sum_{i=0}^{q-1}(q-i)\omega_{qk+i}\left[u(\theta+ip/q,p/q)
+v(\theta+ip/q,p/q)\right],\\
 \Ev^{(q)}(\theta,r,\eps)&=&\sum_{i=0}^{q-1}\left[\Ev_{p,q}(\theta+i(p/q+r),r)+\eps^{1/2} E_4(\theta+i(p/q+r),r)\right],\\
 v^{(q)}(\theta,r,\omega_k^q)&=&\sum_{i=0}^{q-1}
\omega_{qk+i}v(\theta+i(p/q+r),r),\\
 v_2^{(q)}(\theta,r,\omega_k^q)&=&\sum_{i=0}^{q-1}
\omega_{qk+i}v_2(\theta+i(p/q+r),r).
\end{eqnarray*}

Introduce a rescaled variable $r=R\sqrt \eps$. Then $(\theta_1,R_1)$ are defined using system \eqref{thetarRR-q} with the corresponding rescaling. This can be rewritten in the following way, where we just keep the necessary $\eps$-dependent terms:
\begin{equation}\label{thetarRR-q-Rvar}
\begin{array}{rcl}
 \theta_q&=&\displaystyle\theta_0+qR_0\sqrt{\eps}+
\eps\Eu^{(q)}(\theta_0)+\eps u^{(q)}(\theta_0,\omega_0^q)
+\mathcal{O}(\eps^{1+a}),\bigskip\\
  R_q&=& \displaystyle R_0+\eps^{1/2} \Ev^{(q)}(\theta_0,R_0\sqrt{\eps},0)+\eps^{1/2} v^{(q)}(\theta_0,R_0\sqrt{\eps},\omega_0^q)
  \\&&+\eps v_2^{(q)}(\theta_0,0,\omega_0^q)+\mathcal{O}(\eps^{3/2}).
 \end{array}
\end{equation}
Consider the Hamiltonian:
\be \label{eq:RR-Hamiltonian}
H(\theta,R)=\frac{R^2}{2}-
\frac{1}{q}\int_0^\theta\Ev^{(q)}(s,R \sqrt \eps,0)ds.
\ee
Let $H_n:=H(\theta_{qn},R_{qn})$. We study the process:
$(\theta_{qn},H_n):=(\theta_n, H(\theta_{qn},R_{qn}),$
where $(\theta_{qn},R_{qn})$ is the process obtained 
iterating \eqref{thetarRR-q-Rvar} $n$ times. One can see that:
\begin{equation}\label{defH1-RR-rescaled}
\beal 
H_1=H_0&+&\sqrt \eps R_0\,
v^{(q)}(\theta_0,R_0\sqrt \eps,\omega_0^q)
\qquad \qquad \qquad \qquad \qquad \quad \ 
\\
&+&\eps F(\theta_0,R_0,\eps)+
\eps G(\theta_0,R_0,\omega_0^q,\eps)
+\mathcal{O}(\eps^{1+a}),\qquad 
\enal 
\end{equation}
where $F$ and $G$ are:
\begin{eqnarray*}
F(\theta,R)&=&
\frac{1}{q}\Ev^{(q)}(\theta,0,0)\ 
\Eu^{(q)}(\theta)\\
&-&
\frac{1}{q}\Ev^{(q)}(\theta,0,0)
\int_0^{\theta}\Ev^{(q)}(s,0,0)ds\\
&+&\frac{q}{2} R^2 
\partial_\theta\Ev^{(q)}(\theta,0,0)+
\frac{1}{2}\left(\Ev^{(q)}(\theta,0,0)\right)^2\\
&+&\frac{1}{2}\sum_{i=0}^{q-1}
v^2(\theta+ip/q,0),\\
G(\theta,R,\omega_k^q)&=&
\frac{1}{q}\Ev^{(q)}(\theta,0,0)\ 
u^{(q)}(\theta,\omega_k^q)\\
&-&\frac{1}{q}v^{(q)}(\theta,0,\omega_k^q,0)\int_0^{\theta}\partial_r
\Ev^{(q)}(s,0,0)ds\\
+\frac{1}{2}\sum_{\substack{i,j=0\\i\neq j}}^{q-1}
\omega_{qk+i}&\omega_{qk+j}&
v(\theta+ip/q,0)
v(\theta+jp/q,0)\\
&+&
\Ev^{(q)}(\theta,0,0)
v^{(q)}(\theta,0,\omega_k^q)\\
&+&
R\sum_{i=0}^{q-1}
\omega_{qk+i}v_2(\theta+ip/q),0).
\end{eqnarray*}
We note that since $|R|\leq K_1$ we have:
$$\|F\|_{\mathcal{C}^0}\leq K,\qquad \|G\|_{\mathcal{C}^0}\leq K.$$
Moreover, one has that for all $k\ge0$:
$$
\E(G(\theta_{qk},R_{qk},\omega_k^q))=0.
$$

In terms of the variable $H$, the $\{|R|\leq K_1\}$ 
region can be written as:
\begin{equation}\label{defHstrips-RR}
 I_{RR}:=\{H\in\mathbb{R}\,:\,|H|\leq K_3\},
\end{equation}
for some constant $K_3$. Denote by $n^*$ 
the stopping time of leaving $I_{RR}$ for the first time.  

\begin{lem}\label{lemmaexpectation-RR}
Let $f:\mathbb{R}\rightarrow\mathbb{R}$ be any $\mathcal{C}^l$ function with $l\ge 3$. Then 
there exists $d>2/5$ such that for all $\lambda>0$ one has:
\begin{eqnarray*}
 &&\E\left(e^{-\lambda\eps n^*}f(H_{n^*})+ \right. \\
&& \left. 
\eps\sum_{k=0}^{n^*-1}e^{-\lambda \eps k}
\left[\lambda f(H_k)-b(\theta_{qk},R_{qk})f'( H_k) -\frac{\sigma^2(\theta_{qk},R_{qk})}{2}f''(H_k)\right]\right)\\
&&\qquad \qquad \qquad \qquad \qquad \qquad 
\qquad \qquad 
- f(H_0)=\mathcal{O}(\eps^d),
\end{eqnarray*}
where:
\be \label{eq:RR-drift-variance}
b(\theta,R)=F(\theta,R),\quad 
\sigma^2(\theta,R)= R_{qk}^2\sum_{i=0}^{q-1}
v^2(\theta+ip/q,0).\ee
\end{lem}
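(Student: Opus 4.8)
The plan is to follow the scheme of Lemmas \ref{lemmaexpectation} and \ref{lemmaexpectation-IR}, the structural novelties being that the natural time is now $t=\eps n$ rather than $\eps^{2}n$ (each $q$--step displaces $H$ by $\mathcal{O}(\sqrt\eps)$, by \eqref{defH1-RR-rescaled}), that the domain $I_{RR}=\{|H|\le K_{3}\}$ has size $\mathcal{O}(1)$ so that the exit time $n^{*}$ is of order $\eps^{-1}$, and that the limiting object is a diffusion on the graph $\Gamma$ built from the level sets of the pendulum Hamiltonian \eqref{eq:RR-Hamiltonian}. Write $\eta$ for the expression inside $\E(\cdot)$ in the statement. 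First I would apply the law of total expectation, splitting according to whether $n^{*}$ lies in the typical window $[\eps^{-1+\delta},\eps^{-1-\delta}]$ or outside it: using Lemma \ref{main lemma} (or its $q$--step analogue) for the martingale part $\sqrt\eps\sum_{k}R_{qk}v^{(q)}(\theta_{qk},R_{qk}\sqrt\eps,\omega_{k}^{q})$ of $H_{n}-H_{0}$, the boundedness of the pendulum-like drift, and the fact that the process crosses each separatrix of $\Gamma$ in time $\mathcal{O}(\log\eps^{-1})$ (this is where \cite{FW} and the local-time estimates of \cite{FS} enter), one obtains $\Prob\{n^{*}<\eps^{-1+\delta}\}+\Prob\{n^{*}>\eps^{-1-\delta}\}\le\exp(-c\eps^{-2\delta})$, so only the typical window contributes.

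On that window, telescoping $e^{-\lambda\eps n^{*}}f(H_{n^{*}})=f(H_{0})+\sum_{k=0}^{n^{*}-1}\big(e^{-\lambda\eps(k+1)}f(H_{k+1})-e^{-\lambda\eps k}f(H_{k})\big)$, Taylor-expanding each summand to second order in $H_{k+1}-H_{k}$ and using $e^{-\lambda\eps(k+1)}=e^{-\lambda\eps k}(1-\lambda\eps+\mathcal{O}(\eps^{2}))$, then inserting \eqref{defH1-RR-rescaled} along with $(H_{k+1}-H_{k})^{2}=\eps R_{qk}^{2}\big(v^{(q)}(\theta_{qk},R_{qk}\sqrt\eps,\omega_{k}^{q})\big)^{2}+\mathcal{O}(\eps^{3/2})$ and $|H_{k+1}-H_{k}|^{3}=\mathcal{O}(\eps^{3/2})$, the $\lambda f(H_{k})$ terms cancel and
\[
\eta=f(H_{0})+\sqrt\eps\sum_{k}e^{-\lambda\eps k}f'(H_{k})R_{qk}v^{(q)}(\theta_{qk},R_{qk}\sqrt\eps,\omega_{k}^{q})
\]
\[
+\eps\sum_{k}e^{-\lambda\eps k}f'(H_{k})\big[F+G-b\big]+\frac{\eps}{2}\sum_{k}e^{-\lambda\eps k}f''(H_{k})\big[R_{qk}^{2}(v^{(q)})^{2}-\sigma^{2}\big]+(\text{errors}).
\]
Since $b=F$ by \eqref{eq:RR-drift-variance}, the first $\eps$--sum becomes $\eps\sum_{k}e^{-\lambda\eps k}f'(H_{k})G(\theta_{qk},R_{qk},\omega_{k}^{q})$ up to an $\mathcal{O}(\sqrt\eps)$-per-term discrepancy from the $\eps$--dependence of $F$; and writing $(v^{(q)})^{2}=\sum_{i}v\big(\theta_{qk}+i(p/q+R_{qk}\sqrt\eps),R_{qk}\sqrt\eps\big)^{2}+(\text{off-diagonal})$ and using $|R_{qk}|\le K_{1}$ and smoothness gives $R_{qk}^{2}(v^{(q)})^{2}-\sigma^{2}(\theta_{qk},R_{qk})=R_{qk}^{2}(\text{off-diagonal})+\mathcal{O}(\sqrt\eps)$.

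Next I would take expectations. Because $\omega_{k}^{q}=(\omega_{qk},\dots,\omega_{qk+q-1})$ is independent of the $\sigma$--algebra $\mathcal{F}_{qk}$ generated by $\omega_{0},\dots,\omega_{qk-1}$, and $v^{(q)}$, every summand of $G$ (recall $\E(G(\theta_{qk},R_{qk},\omega_{k}^{q}))=0$ from the remark after \eqref{defH1-RR-rescaled}) and the off-diagonal part of $(v^{(q)})^{2}$ are all linear or purely off-diagonal-quadratic in the $\omega_{qk+i}$, their conditional expectations given $\mathcal{F}_{qk}$ vanish; as $H_{k},R_{qk},\theta_{qk}$ are $\mathcal{F}_{qk}$-measurable, the $\sqrt\eps$--sum is an $(\mathcal{F}_{qk})_{k}$-martingale whose expectation is killed by optional stopping (applied to $n^{*}\wedge N$, $N\to\infty$, using the exponential tail of $n^{*}$), and likewise the $G$--sum and the off-diagonal variance sum have zero expectation. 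What survives from the variance sum is $\tfrac{\eps}{2}\sum_{k}e^{-\lambda\eps k}f''(H_{k})\,\mathcal{O}(\sqrt\eps)=\mathcal{O}(\sqrt\eps)$, using $\eps\sum_{k}e^{-\lambda\eps k}=\lambda^{-1}+o(1)$; bounding the Taylor and normal-form remainders the same way and collecting all contributions yields $\E(\eta)-f(H_{0})=\mathcal{O}(\eps^{d})$ with $d$ the minimum of the competing exponents, and a careful count shows $d>2/5$.

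The main obstacle I expect is the joint control of $n^{*}$ and of the error accumulation near the saddle points of $\Gamma$: there the pendulum flow is slow, the ergodization turning the $v^{(q)}$-sums into orbit averages is weakened, and one must use that the limit is a genuine diffusion on a graph with finite local time and logarithmic passage through separatrices, following \cite{FW} and \cite{FS}. This slowdown is precisely what keeps $d$ strictly between $2/5$ and $1/2$, and it is also what makes the tail bound $\Prob\{n^{*}>\eps^{-1-\delta}\}$ genuinely harder than in the TI and IR cases, where the strips are infinitesimal and the exit is automatically ``fast''.
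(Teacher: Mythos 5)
Your proposal is in the right spirit — telescoping, second-order Taylor expansion, plugging in \eqref{defH1-RR-rescaled}, and using the martingale (zero-mean) structure of $v^{(q)}$, $G$, and the off-diagonal quadratic terms — but you import machinery from the TI/IR cases that the paper deliberately avoids, and in one place this leads you to anticipate a step that would in fact be hard and is not needed.

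The key simplification in the RR case is that the natural time scale is $t=\eps n$ (as you correctly observe), so the discount factor alone makes the remainder sum absolutely summable over \emph{all} $k$: $\eps\sum_{k\ge 0}e^{-\lambda\eps k}\mathcal O(\eps^{a})=\mathcal O(\eps^{a})$, see \eqref{errortermsum1-RR}. Hence no law-of-total-expectation split on $n^{*}\in[\eps^{-1+\delta},\eps^{-1-\delta}]$ is required, and in particular you do not need the tail bound $\Prob\{n^{*}>\eps^{-1-\delta}\}\le\exp(-c\eps^{-2\delta})$. That bound is precisely the thing you flag as ``genuinely harder'' near the saddles — and it \emph{is} harder (passage through a separatrix has only a logarithmic slowdown, not a Gaussian one, so the argument from Lemma \ref{main lemma} used in the TI/IR proofs does not port over directly). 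Fortunately it plays no role: the paper only needs $n^{*}<\infty$ a.s., which it establishes by noting that the drift $\frac{1}{2}\sum_i v^2(\theta^*+ip/q,0)$ is positive at critical points (using \textbf{[H2]}/\textbf{[H5]}) and the diffusion coefficient is nonzero away from $R=0$.

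A second, smaller point: the cancellations $F(\theta_{qk},R_{qk})-b(\theta_{qk},R_{qk})=0$ and $R_{qk}^{2}\sum_i v^{2}(\theta_{qk}+ip/q,0)-\sigma^{2}(\theta_{qk},R_{qk})=0$ are exact by the definitions \eqref{eq:RR-drift-variance}, not approximate up to $\mathcal O(\sqrt\eps)$. The $\eps$-dependence is already extracted into the $\mathcal O(\eps^{1+a})$ and $\mathcal O(\eps^{3/2})$ remainders of \eqref{Hkdiff-RR}–\eqref{Hkdiff2-RR}; conflating the two sources would muddy the bookkeeping without changing the final order. Your appeal to optional stopping for the martingale sums is a legitimate way to make rigorous the step the paper states more tersely (``$R_{qk}$ and $H_k$ are independent of $\omega_k^q$''), so that part is sound. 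Drop the window split and the separatrix-passage tail estimate, use the exact cancellations, bound the remainder by the geometric series, and the argument closes with $\E(\eta)-f(H_0)=\mathcal O(\eps^{a})$.
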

\begin{proof} Depending on properties of the averaged 
potential $\Ev^{(q)}(\theta,R \sqrt \eps,0)$ 
we decompose the region $\{|H|\le K_1\}$ 
into several domains as follows. 

In the interval $[-K_1,K_1]$ there are finitely many 
critical value of $H$ due to the fact that $\E v^{(q)}$ is 
a trigonometric polynomial in $\theta$. By assumption 
[{\bf H5}] all critical points are nondegenerate. Therefore, 
there are finitely many of them.  Consider 
a $\sqrt \eps$-neighborhood of each one of them. 

There are level sets that are not intersecting 
these neighbourhoods of the critical points. Call 
these level sets {\it regular}. The others levels intersect 
these neighbourhoods. Call these level sets 
{\it nearly critical}. Notice that the regular level sets are 
no necessarily connected (see Fig. \ref{fig:potential-graph}). 

Each connected component of a family of ovals gives rise 
to a segment and the union of such segments gives rise to 
a diffusion on a graph as in Freidlin-Wentzell \cite{FW}, Sect. 8. 

Now we study separately three regimes: 
\begin{itemize}
\item a $\sqrt \eps$-near a critical point;

\item iterates with an initial condition on 
a near critical level set;  

\item iterates with an initial condition on 
a regular level set;
\end{itemize}

In the first case note that $n^*<\infty$ almost surely. 
Indeed, on the one hand if $(\theta^*,0)$ is a critical point of 
the dominant part of the deterministic system, i.e. $\Eu^{(q)}(\theta^*,0,0)=\Ev^{(q)}(\theta^*,0,0)=0$, then the drift of the process is given by:
$$b(\theta^*,0)=\frac{1}{2}
\sum_{i=0}^{q-1}v^2(\theta^*+ip/q,0)>0.
$$
On the other hand, if $(\theta,0)$ is not a critical point, 
then the drift of the $R$--component of the process $(\theta_{qn},R_{qn})$ defined in \eqref{thetarRR-q-Rvar} 
is precisely, which is $\Ev^{(q)}(\theta,0,0)\neq0$. In conclusion, 
the process does not get stuck at $R=0$. Since away from 
this zone the diffusion coefficient $\sigma(\theta,R)$ is nonzero, 
$n^*$ must be finite with probability one.

In the second case denote:
\begin{equation}\label{defeta-RR}
\beal 
\eta=e^{-\lambda\eps n^*}f(H_{n^*})+ \qquad \qquad 
\qquad \qquad \qquad \qquad \qquad \qquad \qquad \qquad 
\\
\eps\sum_{k=0}^{n^*-1}e^{-\lambda\eps k}\left[\lambda f(H_k)-b(\theta_{qk},R_{qk})f'(H_k)-\frac{\sigma^2(\theta_{qk},R_{qk})}{2}f''(H_k)\right].
\enal 
\end{equation}
Now we write:{\small 
\begin{eqnarray*}
e^{-\lambda\eps n^*}f(H_{n^*})&=&f(H_0)+\sum_{k=0}^{n^*-1}\left[-\lambda e^{-\lambda\eps k}f(H_k)+e^{-\lambda\eps k}f'(H_k)(H_{k+1}-H_k)\right.\\
&&\left.+\frac{1}{2}e^{-\lambda\eps k}f''(H_k)(H_{k+1}-H_k)^2+\mathcal{O}(e^{-\lambda\eps k}\eps^{3/2})\right],
\end{eqnarray*}}
so that \eqref{defeta-RR} writes out as:
{\small 
\begin{eqnarray}\label{eta-version2-RR}
\beal \eta=&&f(H_0)+\qquad \qquad\qquad\qquad
\qquad \qquad \qquad \qquad\qquad\qquad
\qquad \qquad \\
\sum_{k=0}^{n^*-1}&&\left[e^{-\lambda\eps k}f'(H_k)(H_{k+1}-H_k)+\frac{1}{2}e^{-\lambda\eps k}f''(H_k)(H_{k+1}-H_k)^2\right]\qquad \qquad\\
 -\eps\sum_{k=0}^{n^*-1} && e^{-\lambda\eps k}\left[b(\theta_{qk},R_{qk})f'(H_k)+\frac{\sigma^2(\theta_{qk},R_{qk})}{2}f''(H_k)\right]+\sum_{k=0}^{n^*-1}\mathcal{O}(e^{-\lambda\eps k}\eps^{3/2}).\\
\enal 
\end{eqnarray}}
Now, using \eqref{defH1-RR-rescaled} it is clear that:
\begin{equation}\label{Hkdiff-RR}
\beal 
 H_{k+1}-H_k=\sqrt{\eps}R_{qk}v^{(q)}(\theta_{qk},R_{qk}\sqrt{\eps},\omega_k^q)+\qquad\qquad\qquad\qquad \\ 
\eps F(\theta_{qk},R_{qk})+
\eps G(\theta_{qk},R_{qk},\omega_k^q)
+\mathcal{O}(\eps^{1+a}).
\enal 
\end{equation}
Moreover, we have:
{\small 
\begin{eqnarray}\label{Hkdiff2-RR}
\beal 
&&(H_{k+1}-H_k)^2=\eps R_{qk}^2(v^{(q)}(\theta_{qk},0,\omega_k^q))^2+\mathcal{O}(\eps^{3/2})\qquad \qquad \qquad \\
&&=\eps R_{qk}^2\sum_{i=0}^{q-1}
v^2(\theta_{qk}+ip/q,0)+\eps G_0(\theta_{qk},R_{qk},\omega_k^q)+\mathcal{O}(\eps^{3/2}),\\
\enal 
\end{eqnarray}}
where:
{\small\begin{eqnarray*}
&&G_0(\theta_{qk},R_{qk},\omega_k^q)\\
&&=2R_{qk}^2\sum_{l=0}^{q-1}\sum_{j=l+1}^{q-1}\omega_{qk+l}\omega_{qk+j}v(\theta+lp/q,0)v(\theta+jp/q,0).
\end{eqnarray*}}
We note that $\E(G_0(\theta_{qk},R_{qk},\omega_k^q))=0$. 

Using \eqref{Hkdiff-RR} and \eqref{Hkdiff2-RR}, equation \eqref{eta-version2-RR} writes out as:
{\small 
\begin{eqnarray}\label{eta-version3-RR}
\eta&=&f(H_0)+\sqrt{\eps}\sum_{k=0}^{n^*-1}e^{-\lambda\eps k}f'(H_k)R_{qk}v^{(q)}(\theta_{qk},\sqrt{\eps}R_{qk},\omega_k^q)\qquad \qquad \qquad \qquad \qquad \qquad \qquad  \nonumber\\
 &&+\eps\sum_{k=0}^{n^*-1}e^{-\lambda\eps k}f'(H_k)\left[F(\theta_{qk},R_{qk})-b(\theta_{qk},R_{qk})\right]
\qquad \qquad \qquad \qquad \qquad \qquad \qquad  \nonumber\\
 &&+\frac{\eps}{2}\sum_{k=0}^{n^*-1}e^{-\lambda\eps k}f''(H_k)\left[R_{qk}^2
\sum_{i=0}^{q-1}v^2(\theta_{qk}+ip/q,0)-
\sigma^2(\theta_{qk},R_{qk})\right]\qquad \nonumber\\
 &&+\eps\sum_{k=0}^{n^*-1}e^{-\lambda\eps k}\left[f'(H_k)G(\theta_{qk},R_{qk},\omega_k^q)+\frac{f''(H_k)}{2}G_0(\theta_{qk},R_{qk},\omega_k^q)\right]\nonumber\\
 &&+\sum_{k=0}^{n^*-1}\mathcal{O}(e^{-\lambda\eps k}\eps^{1+a}). \nonumber
\end{eqnarray}}
Now, on the one hand by definition of $b(\theta,R)$ and $\sigma^2(\theta,R)$ it is clear that:
\begin{eqnarray}\label{sigma0-RR}
\beal 
F(\theta_{qk},R_{qk})-b(\theta_{qk},R_{qk})&=&0\label{b0-RR}\\
R_{qk}^2\sum_{i=0}^{q-1}v^2(\theta_{qk}+ip/q,0)-\sigma^2(\theta_{qk},R_{qk})&=&0.\enal 
\end{eqnarray} 
On the other hand, the last term in \eqref{eta-version3-RR} can be bounded by:
\begin{eqnarray}\label{errortermsum1-RR}
\left|\sum_{k=0}^{n^*-1}
\mathcal{O}(e^{-\lambda\eps k}\eps^{1+a})\right|&\leq& K\eps^{1+a}\sum_{k=0}^{n^*-1}e^{-\lambda\eps k}= K\eps^{1+a}\frac{1-e^{-\lambda\eps n^*}}{1-e^{-\lambda\eps}}\nonumber\\
&\leq& K_\lb\eps^{a}, 
\end{eqnarray}
for some positive constants $K$ and $K_\lambda$.
Using \eqref{b0-RR}, \eqref{sigma0-RR} and 
\eqref{errortermsum1-RR} in equation \eqref{eta-version3-RR} 
we have:
\begin{eqnarray}\label{eta-version4-RR} 
\eta&=&f(H_0)+
\sqrt{\eps}\sum_{k=0}^{n^*-1}e^{-\lambda\eps k}
f'(H_k)R_{qk}v^{(q)}(\theta_{qk},\sqrt{\eps}R_{qk},\omega_k^q)\\
 &+&\eps\sum_{k=0}^{n^*-1}
e^{-\lambda\eps k}\left[f'(H_k)G(\theta_{qk},R_{qk},\omega_k^q)+\frac{f''(H_k)}{2}G_0(\theta_{qk},R_{qk},\omega_k^q)\right] +\mathcal{O}(\eps^a).\nonumber
\end{eqnarray}
Thus, to finish the proof, we just need to use that:
$$\E\left(v^{(q)}(\theta_{qk},\sqrt{\eps}R_{qk},\omega_k^q)\right)=\E\left(G_0(\theta_{qk},R_{qk},\omega_k^q)\right)=\E\left(G(\theta_{qk},R_{qk},\omega_k^q)\right)=0,$$
and that $R_{qk}$ and $H_k$ are independent of $\omega_k^q$. Using these facts in \eqref{eta-version4-RR}, one obtains straightforwardly:
$$ \E(\eta)-f(H_0)=\mathcal{O}(\eps^a),$$
and the proof is finished.
\end{proof}

\subsection{Transition Zones}\label{sec:TZ-case}

Here we study the system in the RR case, in the subdomain:
$$K_1\eps^{1/2}\leq|r-p/q|\leq K_2\eps^{1/6},$$
for certain constants $K_1$ and $K_2$. From the Normal Form Theorem, in the Real Rational strips the system takes the following form:
\begin{equation}\label{thetarTZ-pre}
\begin{array}{rcl}
 \theta_1&=&\theta_0+r_0+\mathcal{O}(\eps),\\
 r_1&=&r_0+\eps\Ev_{p,q}(\theta_0,r_0)+\eps\omega_0v(\theta_0,r_0)+\mathcal{O}(\eps^{3/2}).
 \end{array}
\end{equation}
Recall that:
$$
\|\Ev_{p,q}\|_{\mathcal{C}^0}\leq K,\qquad\|v\|_{\mathcal{C}^0}\leq K.$$

For our purposes, it will be more convenient to work with the variable:
$$ \hat r=r-p/q,\qquad K_1\eps^{1/2}\leq|\hat r|\leq K_2\eps^{1/6}.$$
With this new variable, system \eqref{thetarTZ-pre} writes out as:
\begin{equation}\label{thetarTZ}
\begin{array}{rcl}
 \theta_1&=&\theta_0+p/q+\hat r_0+\mathcal{O}(\eps),\\
 \hat r_1&=&\hat r_0+\eps\hatEv_{p,q}(\theta_0,\hat r_0)+\eps\omega_0\hat v(\theta_0,\hat r_0)+\mathcal{O}(\eps^{3/2}),
 \end{array}
\end{equation}
where:
\begin{equation*}
\begin{array}{rcl}
\hat v(\theta_0,\hat r_0)=v(\theta_0,\hat r_0+p/q),\qquad 
\hatEv_{p,q}(\theta_0,\hat r_0)=\Ev_{p,q}(\theta_0,\hat r_0+p/q).
\end{array}
\end{equation*}
From now on, we will abuse notation and drop all hats. We are interested in the $q-$th iteration of map \eqref{thetarTZ}, which is given by:
\begin{equation*}
\begin{array}{rcl}
 \theta_q&=&\displaystyle\theta_0+qr_0+\mathcal{O}(\eps),\medskip\\
  r_q&=& \displaystyle r_0+\eps\sum_{k=0}^{q-1}\Ev_{p,q}(\theta_k, r_k)+\eps\sum_{k=0}^{q-1}\omega_kv(\theta_k, r_k)+\mathcal{O}(\eps^{3/2}).
 \end{array}
\end{equation*}
Note that we have used that $|q|$ is bounded. Moreover, taking into account that for $i\leq q$, we have:
$$\theta_i=\theta_0+i(p/q+r_0)+\mathcal{O}(\eps),\qquad r_i=r_0+\mathcal{O}(\eps),$$
so that we can rewrite the last system as:
\begin{equation}\label{thetarTZ-q}
\begin{array}{rcl}
 \theta_q&=&\displaystyle\theta_0+qr_0+\mathcal{O}(\eps),\bigskip\\
  r_q&=& \displaystyle r_0+\eps \Ev^{(q)}(\theta_0,r_0)+\eps v^{(q)}(\theta_0,r_0,\omega_0^q)+\mathcal{O}(\eps^{3/2}),
 \end{array}
\end{equation}
where we use the notation $\omega_k^q=(\omega_{kq},\dots,\omega_{kq+q-1})$ and:
\begin{eqnarray*}
 \Ev^{(q)}(\theta,r)&=&\sum_{i=0}^{q-1}\Ev_{p,q}(\theta+i(p/q+r),r),\\
 v^{(q)}(\theta,r,\omega_k^q)&=&\sum_{i=0}^{q-1}\omega_{kq+i}v(\theta+i(p/q+r),r).
\end{eqnarray*}
We point out that for any $k\ge0$:
$$\E\left(v^{(q)}(\theta_{qk},r_{qk},\omega_k^q)\right)=0.$$

Now let us consider the following function:
$$H(\theta,r)=\frac{r^2}{2}-\frac{\eps}{q}\int_0^\theta\Ev^{(q)}(s,r)ds.$$
In this section we will study the process:
$$H_n:=H(\theta_{qn},r_{qn}),$$
where $(\theta_{qn},r_{qn})$ is the process obtained iterating \eqref{thetarTZ-q} $n$ times. One can easily see that:
\begin{eqnarray}\label{defH1}
H_1&=&H_0+\eps r_0v^{(q)}(\theta_0,r_0,\omega_0^q)+\mathcal{O}(\eps r_0^2).
\end{eqnarray}

Now, given a constant $0<\rho<1/6$, we want to study the process $H_n$ in the following $H-$strips:
\begin{equation}\label{defHstrips}
 I_\rho(r_0)=\{H\in\mathbb{R}\,:\,|H-H_0|\leq |r_0|\eps^{1-\rho}\}.
\end{equation}
We stress out that the width of these strips depends on the initial condition $r_0$ and $\eps$. To avoid this, we can define the process:
$$\bar H_n:=|r_0|^{-1}\eps^{-1+\rho}H_n.$$
The process $\bar H_n$ is defined through:
\begin{eqnarray}\label{defH1-rescaled}
\bar H_1&=&\bar H_0+\eps^{\rho}v^{(q)}(\theta_0,r_0,\omega_0^q)+\mathcal{O}(\eps^{\rho}|r_0|).
\end{eqnarray}
Clearly, if we denote by $n_\rho$ the first exit time of the process $H_n$ from the strip $ I_\rho(r_0)$, it is also the first exit time of the process $\bar H_n$ of the strip:
\begin{equation}\label{defHstrips-rescaled}
 I=\{H\in\mathbb{R}\,:\,|H-\bar H_0|\leq 1\}.
\end{equation}

\begin{lem}\label{lemmaexpectation-TZ}
Let $f:\mathbb{R}\rightarrow\mathbb{R}$ be any $\mathcal{C}^l$ function with $l\ge3$. Then there exists $d>0$ such that for all $\lambda>2/5$ one has:
\begin{eqnarray*}
 &&\E\left(e^{-\lambda\eps^{2\rho}n_\rho}f(\bar H_{n_\rho})+\eps^{2\rho}
\sum_{k=0}^{n_\rho-1}e^{-\lambda\eps^{2\rho}k}\left[\lambda f(\bar H_k)-\frac{\sigma^2(\theta_{qk},r_{qk})}{2}f''(\bar H_k)\right]\right)\\
&&- f(\bar H_0)=\mathcal{O}(\eps^d),
\end{eqnarray*}
where:
$$\sigma^2(\theta,r)=\sum_{i=0}^{q-1}v^2(\theta+i(p/q+r),r).$$
\end{lem}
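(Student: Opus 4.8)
The plan is to run the discrete martingale--problem scheme already used for Lemmas \ref{lemmaexpectation} and \ref{lemmaexpectation-RR}. This case is in fact lighter than the Totally Irrational one: because the drift and the diffusion coefficient of the limiting generator are allowed to depend on the running pair $(\theta_{qk},r_{qk})$, no ergodization (no analogue of Lemma \ref{lemmasigma2}) is needed, and, as in the Real Rational case, no drift survives. First I would record the elementary consequences of \eqref{defH1} and \eqref{defH1-rescaled}: for $k<n_\rho$,
\[
\bar H_{k+1}-\bar H_k=\eps^{\rho}v^{(q)}(\theta_{qk},r_{qk},\omega_k^q)+\mathcal{O}(\eps^{\rho}|r_{qk}|),\qquad (\bar H_{k+1}-\bar H_k)^2=\eps^{2\rho}\bigl(v^{(q)}(\theta_{qk},r_{qk},\omega_k^q)\bigr)^2+\mathcal{O}(\eps^{2\rho}|r_{qk}|),
\]
where $|r_{qk}|\le K_2\eps^{1/6}$ throughout the transition zone and $r_{qk}$ stays comparable to $r_0$ on $\{k<n_\rho\}$. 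Expanding $\bigl(v^{(q)}\bigr)^2$ as a double sum over the fresh block $\omega_k^q=(\omega_{qk},\dots,\omega_{qk+q-1})$ and using $\omega_{qk+i}^2=1$, its diagonal part equals exactly $\sigma^2(\theta_{qk},r_{qk})$, while the off-diagonal part $G_0(\theta_{qk},r_{qk},\omega_k^q)=2\sum_{0\le i<j\le q-1}\omega_{qk+i}\omega_{qk+j}v(\theta_{qk}+i(p/q+r_{qk}),r_{qk})v(\theta_{qk}+j(p/q+r_{qk}),r_{qk})$ satisfies $\E(G_0\mid\mathcal{F}_{qk})=0$, with $\mathcal{F}_{qm}:=\sigma(\omega_0,\dots,\omega_{qm-1})$. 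I would also note that $v^{(q)}(\theta,r,\cdot)$ is linear and mean zero in $\omega_k^q$, that $n_\rho$ is a stopping time of the block filtration $(\mathcal{F}_{qm})_m$, and that $n_\rho<\infty$ almost surely because $\sigma^2(\theta,r)\ne0$ everywhere on the transition zone (same argument as in Lemma \ref{lem:sg-nonvanish}, since there $r\ne p/q$); hence $\eta$ is well defined.

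Next I would expand $\eta$. Telescoping $e^{-\lambda\eps^{2\rho}n_\rho}f(\bar H_{n_\rho})=f(\bar H_0)+\sum_{k=0}^{n_\rho-1}\bigl(e^{-\lambda\eps^{2\rho}(k+1)}f(\bar H_{k+1})-e^{-\lambda\eps^{2\rho}k}f(\bar H_k)\bigr)$, expanding each summand to second order in $\bar H_{k+1}-\bar H_k$ and to first order in $\eps^{2\rho}$ in the exponential, and substituting into the definition of $\eta$, the terms $\lambda f(\bar H_k)$ cancel, the Taylor remainder is $\mathcal{O}(e^{-\lambda\eps^{2\rho}k}\eps^{3\rho})$ per step, and the cross term $\lambda\eps^{2\rho}f'(\bar H_k)(\bar H_{k+1}-\bar H_k)=\mathcal{O}(\eps^{3\rho})$ is absorbed, leaving
\[
\eta=f(\bar H_0)+\eps^{\rho}\sum_{k=0}^{n_\rho-1}e^{-\lambda\eps^{2\rho}k}f'(\bar H_k)\,v^{(q)}(\theta_{qk},r_{qk},\omega_k^q)+\frac{\eps^{2\rho}}{2}\sum_{k=0}^{n_\rho-1}e^{-\lambda\eps^{2\rho}k}f''(\bar H_k)\,G_0(\theta_{qk},r_{qk},\omega_k^q)+\mathcal{R},
\]
where the quadratic term is rewritten using $(\bar H_{k+1}-\bar H_k)^2-\eps^{2\rho}\sigma^2(\theta_{qk},r_{qk})=\eps^{2\rho}G_0(\theta_{qk},r_{qk},\omega_k^q)+\mathcal{O}(\eps^{2\rho}|r_{qk}|)$. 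Using $\sum_{k\ge0}e^{-\lambda\eps^{2\rho}k}\le (1-e^{-\lambda\eps^{2\rho}})^{-1}\le C_\lambda\eps^{-2\rho}$, the bound $|r_{qk}|\le K_2\eps^{1/6}$, and the uniform $\mathcal{C}^0$ bounds on $f',f'',v^{(q)}$, the collected remainder obeys $\mathcal{R}=\mathcal{O}(\eps^{\rho})+\mathcal{O}(\eps^{1/6-\rho})=\mathcal{O}(\eps^d)$ with $d=\min\{\rho,\tfrac16-\rho\}>0$, because $0<\rho<1/6$.

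Finally I would take expectations. Writing $\sum_{k=0}^{n_\rho-1}(\,\cdot\,)_k=\sum_{k\ge0}(\,\cdot\,)_k\mathbf{1}_{\{k<n_\rho\}}$ with $\{k<n_\rho\}\in\mathcal{F}_{qk}$, and using that $\bar H_k,\theta_{qk},r_{qk}$ are $\mathcal{F}_{qk}$-measurable, hence independent of the fresh block $\omega_k^q$, conditioning on $\mathcal{F}_{qk}$ annihilates both martingale sums, since $\E\bigl(v^{(q)}(\theta_{qk},r_{qk},\omega_k^q)\mid\mathcal{F}_{qk}\bigr)=0$ and $\E\bigl(G_0(\theta_{qk},r_{qk},\omega_k^q)\mid\mathcal{F}_{qk}\bigr)=0$; the interchange of sum and expectation is justified by dominated convergence, the discount factor and the uniform bounds supplying an integrable majorant. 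Thus $\E(\eta)-f(\bar H_0)=\E(\mathcal{R})=\mathcal{O}(\eps^d)$, which is the assertion. The step I expect to be the main obstacle is controlling the $\mathcal{O}(\eps^{\rho}|r_{qk}|)$ correction in the increment: it contains a deterministic, drift-like contribution, but the transition-zone constraint $|r_{qk}|\le K_2\eps^{1/6}$ together with the natural time scale of $\eps^{-2\rho}$ blocks forces its accumulated effect to be $\mathcal{O}(\eps^{1/6-\rho})\to0$, which is precisely why the limit has zero drift; keeping the random upper limit $n_\rho$ honest in the optional-stopping step (it is a stopping time for $(\mathcal{F}_{qm})_m$, and the discount factor supplies the needed integrability) is the remaining point of care.
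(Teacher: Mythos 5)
Your proof is correct and follows essentially the same route as the paper's: telescope and Taylor-expand $\eta$, write the increment via \eqref{defH1-rescaled}, split $(\bar H_{k+1}-\bar H_k)^2$ into its diagonal part (which equals $\eps^{2\rho}\sigma^2(\theta_{qk},r_{qk})$) and the off-diagonal $G_0$ which is conditionally mean zero, bound the remainder using $|r_{qk}|\le K_2\eps^{1/6}$ and the geometric discount $\sum_k e^{-\lambda\eps^{2\rho}k}\le C_\lambda\eps^{-2\rho}$, and then annihilate the two martingale sums in expectation. You are in fact marginally more careful than the paper on two minor points: you make the optional-stopping step explicit by noting $\{k<n_\rho\}\in\mathcal F_{qk}$ so that conditioning on $\mathcal F_{qk}$ legitimately kills the fresh-block terms, and you retain the $\mathcal O(\eps^{\rho})$ contribution coming from the accumulated third-order Taylor remainder $\eps^{3\rho}\cdot\eps^{-2\rho}$, giving $d=\min\{\rho,\tfrac16-\rho\}$ where the paper records only $d=\tfrac16-\rho$.
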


\begin{proof}
Let us denote:
\begin{equation}\label{defeta-TZ}
\eta=e^{-\lambda\eps^{2\rho}n_\rho}f(r_{n_\rho})+\eps^{2\rho}
\sum_{k=0}^{n_\rho-1}e^{-\lambda\eps^{2\rho}k}\left[\lambda f(\bar H_k)-\frac{\sigma^2(\theta_{qk},r_{qk})}{2}f''(\bar H_k)\right].
\end{equation}
%
Now we write:
\begin{eqnarray*}
e^{-\lambda\eps^{2\rho}n_\rho}f(\bar H_{n_\rho})&=&
\\
f(\bar H_0)&+&\sum_{k=0}^{n_\rho-1}\left[-\lambda\eps^{2\rho} e^{-\lambda\eps^{2\rho}k}f(\bar H_k)+e^{-\lambda\eps^{2\rho}k}f'(\bar H_k)(\bar H_{k+1}-\bar H_k)\right.\\
&&\left.+\frac{1}{2}e^{-\lambda\eps^{2\rho}k}f''(\bar H_k)(\bar H_{k+1}-\bar H_k)^2+\mathcal{O}(e^{-\lambda\eps^2k}\eps^{3\rho})\right],
\end{eqnarray*}
so that \eqref{defeta-TZ} writes out as:
\begin{eqnarray}\label{eta-version2-TZ}
 \eta&=&f(\bar H_0)+  \\
\sum_{k=0}^{n_\rho-1} &&
\left[e^{-\lambda\eps^{2\rho}k}f'(\bar H_k)(\bar H_{k+1}-\bar H_k)+\frac{1}{2}e^{-\lambda\eps^{2\rho}k}f''(\bar H_k)(\bar H_{k+1}-\bar H_k)^2\right]\nonumber\\
 &&-\eps^{2\rho}\sum_{k=0}^{n_\rho-1}e^{-\lambda\eps^{2\rho}k}\frac{\sigma^2(\theta_{qk},r_{qk})}{2}f''(\bar H_k)+\sum_{k=0}^{n_\rho-1}\mathcal{O}(e^{-\lambda\eps^2k}\eps^{3\rho}).\nonumber
\end{eqnarray}
Now, using \eqref{defH1-rescaled} it is clear that:
\begin{equation}\label{Hkdiff}
 \bar H_{k+1}-\bar H_k=\eps^{\rho}v^{(q)}(\theta_{qk},r_{qk},\omega_k^q)+\mathcal{O}(\eps^\rho |r_{qk}|).
\end{equation}
Moreover, we have:
\begin{eqnarray}\label{Hkdiff2}
(\bar H_{k+1}-\bar H_k)^2&=&\eps^{2\rho}(v^{(q)}(\theta_{qk},r_{qk},\omega_k^q))^2+\mathcal{O}(\eps^{2\rho} |r_{qk}|)\nonumber\\
&=&\eps^{2\rho}\sum_{i=0}^{q-1}v^2(\theta_{qk}+i(p/q+r_{qk}),r_{qk})\nonumber\\
&&+\eps^{2\rho}G_0(\theta_{qk},r_{qk},\omega_k^q)+\mathcal{O}(\eps^{2\rho}|r_{qk}|)
\end{eqnarray}
where:
$$G_0(\theta,r,\omega_k^q)=2\sum_{l=0}^{q-1}\sum_{j=l+1}^{q-1}\omega_{kq+l}\omega_{kq+j}v(\theta+l(p/q+r),r)v(\theta+j(p/q+r),r).$$
We note that $\E(G_0(\theta_{qk},r_{qk},\omega_k^q))=0$. 

Using \eqref{Hkdiff} and \eqref{Hkdiff2} and noting that $\eps^{2\rho}|r_{qk}|\leq\eps^\rho |r_{qk}|$, equation \eqref{eta-version2-TZ} writes out as:
\begin{eqnarray}\label{eta-version3-TZ}
 \eta&=&f(\bar H_0)+\eps^\rho\sum_{k=0}^{n_\rho-1}e^{-\lambda\eps^{2\rho}k}f'(\bar H_k)v^{(q)}(\theta_{qk},r_{qk},\omega_k^q)\nonumber\\
 &&+\frac{\eps^{2\rho}}{2}\sum_{k=0}^{n_\rho-1}e^{-\lambda\eps^{2\rho}k}f''(\bar H_k)\left[\sum_{i=0}^{q-1}v^2(\theta_{qk}+i(p/q+r_{qk}),r_{qk})-\sigma^2(\theta_{qk},r_{qk})\right]\nonumber\\
 &&+\frac{\eps^{2\rho}}{2}\sum_{k=0}^{n_\rho-1}e^{-\lambda\eps^{2\rho}k}f''(\bar H_k)G_0(\theta_{qk},r_{qk},\omega_k^q)+\sum_{k=0}^{n_\rho-1}\mathcal{O}(e^{-\lambda\eps^{2\rho}k}\eps^{\rho}|r_{qk}|).
\end{eqnarray}
Now, on the one hand by definition of $\sigma^2(\theta,r)$ it is clear that:
\begin{eqnarray}
\sum_{i=0}^{q-1}v^2(\theta_{qk}+i(p/q+r_{qk}),r_{qk})-\sigma^2(\theta_{qk},r_{qk})=0.\label{sigma0}
\end{eqnarray} 
On the other hand, the last term in \eqref{eta-version3-TZ} can be bounded by:
\begin{eqnarray}\label{errortermsum1-TZ}
\left|\sum_{k=0}^{n_\rho-1}\mathcal{O}(e^{-\lambda\eps^{2\rho}k}\eps^{\rho}|r_{qk}|)\right|&\leq& K\eps^{\rho+1/6}\sum_{k=0}^{n_\rho-1}e^{-\lambda\eps^{2\rho}k}= K\eps^{\rho+1/6}\frac{1-e^{-\lambda\eps^{2\rho}n_\rho}}{1-e^{-\lambda\eps^{2\rho}}}\nonumber\\
&\leq& K_\lb\eps^{1/6-\rho}, 
\end{eqnarray}
for some positive constants $K$ and $K_\lambda$, where we have used that $|r_{qk}|\leq\eps^{1/6}$ if $0\leq qk\leq n_\rho$. 
Using \eqref{sigma0} and \eqref{errortermsum1-TZ} in equation \eqref{eta-version3-TZ}, and denoting $d=1/6-\rho>0$, we have:
\begin{eqnarray}\label{eta-version4-TZ}
 \eta&=&f(\bar H_0)+\eps^\rho\sum_{k=0}^{n_\rho-1}e^{-\lambda\eps^{2\rho}k}f'(\bar H_k)v^{(q)}(\theta_{qk},r_{qk},\omega_k^q)\nonumber\\
 &&+\frac{\eps^{2\rho}}{2}\sum_{k=0}^{n_\rho-1}e^{-\lambda\eps^{2\rho}k}f''(\bar H_k)G_0(\theta_{qk},r_{qk},\omega_k^q)+\mathcal{O}(\eps^d).
\end{eqnarray}
Thus, to finish the proof, we just need to use that:
$$\E\left(v^{(q)}(\theta_{qk},r_{qk},\omega_k^q)\right)=\E\left(G_0(\theta_{qk},r_{qk},\omega_k^q)\right)=0.$$
Indeed, using this fact in \eqref{eta-version4-TZ}, one obtains straightforwardly:
$$ \E(\eta)-f(\bar H_0)=\mathcal{O}(\eps^d),$$
and the proof is finished.
\end{proof}

\appendix

\section{Measure of the domain covered by RR and IR intervals}\label{sec:measure-IR-RR}
In this section we show that, with the right choice of $b$, the measure of the the union of all strips of RR and IR type inside any compact set:
$$
A_\beta=\cup_k I_\beta^k\subset\mathbb{T}\times B\qquad I_\beta^k\ \textrm{ strips of width }\ 2\eps^\beta
$$
goes to zero as $\eps\to 0$. 

In fact, we will do the proof for $A=[0,1]$. The general case is completely analogous. Let us consider:
\begin{equation}\nonumber
 \mathcal{R}=\{p/q\in\mathbb{Q}\,:\,p<q,\,\gcd(p,q)=1,\,q<\eps^{-b}\}=\cup_{q=1}^{q_\textrm{max}}\mathcal{R}_q \subset[0,1],
\end{equation}
where $q_{\textrm{max}}=[\eps^{-b}]$ and:
\begin{equation}\nonumber
 \mathcal{R}_q=\{p/q\in\mathbb{Q}\,:\,p<q,\,\gcd(p,q)=1\}.
\end{equation}
Finally we denote:
\begin{equation}\nonumber
 I_\mathcal{R}=\{I_\beta^k\subset[0,1]\,:\, \exists p/q\in\mathcal{R}\cap I_\beta\}.
\end{equation}

\begin{lem}\label{lem:im-rational}
Let $\rho$ be fixed, $0<\rho<\beta$, and define $b=(\beta-\rho)/2$. 
Then, for each $I_\beta$ such that there is at most one rational 
$p/q$ satisfying $|q|\leq\eps^{-b}$ the union $I_{\mathcal R}$
has the Lebesgue measure $\mu(I_\mathcal{R}) \le \eps^\rho$
and, therefore,  as $\eps \to 0$:
$$
\mu(I_\mathcal{R})\to 0,
$$
where $\mu$ denotes the Lebesgue measure.
\end{lem}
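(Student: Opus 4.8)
The plan is a direct counting argument. First I would estimate $|\mathcal{R}|$, the number of rationals $p/q\in[0,1]$ in lowest terms with $q\le q_{\max}:=[\eps^{-b}]$. Decomposing $\mathcal{R}=\bigcup_{q=1}^{q_{\max}}\mathcal{R}_q$ with $|\mathcal{R}_q|=\varphi(q)$, and using $\varphi(1)=1$ together with $\varphi(q)\le q-1$ for $q\ge 2$, one gets
\[
|\mathcal{R}|=\sum_{q=1}^{q_{\max}}\varphi(q)\le \frac{q_{\max}^2}{2}\le \frac12\,\eps^{-2b}.
\]
(This is far from optimal, since in fact $\sum_{q\le N}\varphi(q)\sim \tfrac{3}{\pi^2}N^2$, but the crude bound is all that is needed.)

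Next I would use the hypothesis that each strip $I_\beta$ of the $\eps^\beta$-grid contains at most one rational with denominator at most $\eps^{-b}$. Then the map sending a strip of $I_{\mathcal{R}}$ to the unique such rational it contains is an injection into $\mathcal{R}$, so the number of strips constituting $I_{\mathcal{R}}$ is at most $|\mathcal{R}|$. Since each strip has width $2\eps^\beta$,
\[
\mu(I_{\mathcal{R}})\le |\mathcal{R}|\cdot 2\eps^\beta\le \frac12\,\eps^{-2b}\cdot 2\eps^\beta=\eps^{\beta-2b}.
\]
Substituting $b=(\beta-\rho)/2$ yields $\beta-2b=\rho$, hence $\mu(I_{\mathcal{R}})\le \eps^\rho$, and since $\rho>0$ this tends to $0$ as $\eps\to 0$, which is exactly the claim.

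This step is essentially routine, and I expect no genuine obstacle: the only input is that $b$ has been chosen correctly relative to $\beta$ and $\rho$, and this is already built into the statement. The "at most one rational per strip" assumption is natural because the minimal gap between distinct Farey fractions of order $q_{\max}$ is at least $q_{\max}^{-2}\sim \eps^{2b}$, which dominates the strip width $2\eps^\beta$ precisely when $\beta>2b=\beta-\rho$; if one prefers not to invoke the hypothesis at all, one may simply note that each rational lies in at most two strips and absorb the resulting factor into the constant, replacing the bound $\eps^\rho$ by $\eps^{\rho'}$ for any $\rho'<\rho$ and $\eps$ small enough.
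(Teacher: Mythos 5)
Your proof is correct and takes essentially the same approach as the paper: count $\#\mathcal{R}\le\tfrac12\,q_{\max}^2\le\tfrac12\,\eps^{-2b}$ via $\varphi(q)\le q-1$, observe that each strip contains at most one such rational (which the paper verifies by the Farey-gap estimate, just as you sketch at the end), and multiply by the strip width to get $\mu(I_\mathcal R)\le\eps^{\beta-2b}=\eps^\rho$.
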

\begin{proof}
On the one hand, suppose that $p/q\in I_\beta$, $q\leq\eps^{-b}$. Then, for all $p'/q'\in I_\beta$, with $p'$ and $q'$ relatively prime and $p'/q'\neq p/q$, we have:
$$\eps^\beta\geq|p/q-p'/q'|\geq\frac{1}{qq'}\geq\frac{\eps^b}{q'}.$$
Therefore:
$$q'\geq\eps^{-\beta+b}=\eps^{-b-3\rho/2}>\eps^{-b},$$
so the first part of the claim is proved.

On the other hand we note that, if $q_1\neq q_2$, then $\mathcal{R}_{q_1}\cap\mathcal{R}_{q_2}=\emptyset$. Moreover, it is clear that $\#\mathcal{R}_q\leq q-1$ (and if $q$ is prime then $\#\mathcal{R}_q=q-1$, so that the bound is optimal). Therefore we have:
$$\#\mathcal{R}\leq\sum_{q=1}^{q_\textrm{max}}\#\mathcal{R}_q\leq\sum_{q=1}^{q_\textrm{max}}q-1=\frac{q_\textrm{max}^2}{2}<\eps^{-2b}.$$
Since $\mu(I_\beta)=\eps^{\beta}$, one has:
$$0\leq\mu(I_\mathcal{R})=\eps^{\beta}\#\mathcal{R}<\eps^\beta\eps^{-2b}=\eps^{\rho},$$ 
so that the second claim of the lemma is also clear.
\end{proof}

\section{Sufficient condition for weak convergence
and auxiliary lemmas}
\label{sec:auxiliaries}

In order to prove that the $r$-component exhibits a diffusion 
process we need to adapt several lemmas from Ch. 8 
sec. 3 \cite{FW}. We recall some terminology and 
notations (see Ch. 1 sec. 1 \cite{FW} for more details).

In the notations of section \ref{sec:diffusion-generators}
we have 

\begin{lem} \label{lem:FW-suff-cond}
(see Lm. 3.1, \cite{FW}) Let $M$ be a metric space, 
$Y$ a continuous mapping $M \mapsto Y(M)$,  
$Y(M)$ being a complete separable metric space. 
Let $(X^\eps_t,P^\eps_x)$ be a family of Markov 
processes in $M$; suppose that the process 
$Y (X^\eps_t)$ has continuous trajectories. 
Let $(y_t, P_y$) be a Markov process with 
continuous paths in $Y (M)$ whose infinitesimal 
operator is $A$ with domain of definition $D_A$. 
Suppose that the space $C[0,\infty)$ of 
continuous functions on $[0,\infty)$ with values 
in $\Gamma$ is taken as the sample space, so 
that the distribution of the process in the space 
of continuous functions is simply $P_y$. Let 
$\Psi$ be a subset of the space $C(Y (M))$ 
such that for measures $\mu_1,\ \mu_2$ on 
$Y (M)$ the equality $\int f d\mu_1 =\int f d\mu_2$ for all 
$f\in \Psi$ implies $\mu_1 = \mu_2$. Let $D$ be a subset of 
$D_A$ such that for every $f\in \Psi$ and 
$\lb>0$ the equation $\lb F − AF = f$ has 
a solution $F \in D$.

Suppose that for every $x \in M$ the family of distributions 
$Q^\eps_x$ of $Y (X^\eps_\bullet)$ in the space $C[0,\infty)$ 
corresponding to the probabilities $P^\eps_x$ for all $\eps$ 
is tight; and that for every compact $K \subset Y (M)$, 
for every $f \in D$ and every $\lb > 0$,
\[
\E^\eps_x \int_0^\infty 
\exp(-\lb t)\left[ \lb f(Y(X^\eps_t)) -A f(Y(X_t^\eps))
\right]dt \to f(Y(x))
\]
as $\eps \to 0$ uniformly in $x\in Y^{-1}(K)$.

Then $Q^\eps_x$ converges weakly as $\eps \to 0$
to the probability measure $P_{Y(x)}$.  
\end{lem}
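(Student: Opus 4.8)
The plan is to reproduce, in the present setting, the argument of Lemma~3.1 of \cite{FW}; below I outline the steps and flag the one that will require care.

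\emph{Step 1 (compactness).} Since the family $\{Q^\eps_x\}$ of distributions of $Y(X^\eps_\bullet)$ on $C[0,\infty)$ is tight, Prokhorov's theorem yields relative compactness in the weak topology. It therefore suffices to show that every subsequential weak limit (along some $\eps_n\downarrow 0$) equals $P_{Y(x)}$; the full family then converges, and the uniformity over $x\in Y^{-1}(K)$ is inherited from the uniformity assumed on the resolvent convergence.

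\emph{Step 2 (resolvent martingales).} Fix $g\in\Psi$, $\lb>0$, and let $F\in D$ solve $\lb F-AF=g$. First, for the limit process a direct computation from $F=R_\lb g=\int_0^\infty e^{-\lb t}P_t g\,dt$ shows that
\[
N^F_t:=e^{-\lb t}F(y_t)+\int_0^t e^{-\lb s}g(y_s)\,ds
\]
is a $P_y$-martingale. Next, set $u_\eps(z):=\E^\eps_z\int_0^\infty e^{-\lb t}g\big(Y(X^\eps_t)\big)\,dt$; by the Markov property of $X^\eps$ the process $e^{-\lb t}u_\eps(X^\eps_t)+\int_0^t e^{-\lb s}g\big(Y(X^\eps_s)\big)\,ds$ is a genuine martingale for each $\eps$. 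The hypothesis gives $u_\eps(z)\to F(Y(z))$ uniformly on $Y^{-1}(K)$; combining this with tightness (to confine $Y(X^\eps_\bullet)$ to a fixed compact, up to arbitrarily small probability, on each finite time interval) and with the fact that $\xi\mapsto\int_0^t e^{-\lb s}g(\xi_s)\,ds$ is bounded and continuous on $C(Y(M))$, I would pass to the weak limit and conclude that $N^F_\bullet$ is a martingale under any subsequential limit law $Q$ as well.

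\emph{Step 3 (identification of the limit).} The family $\{N^F:F\in D\}$ of martingales determines $P_{Y(x)}$: conditioning the martingale relation for $N^F$ and using that $\Psi$ is measure-determining recovers the transition semigroup, hence all finite-dimensional distributions, of $P_{Y(x)}$; since both $Q$ and $P_{Y(x)}$ are carried by continuous paths (the paths of $Y(X^\eps_\bullet)$ are continuous by assumption, and continuity is preserved by the weak limit through the modulus-of-continuity functionals used to establish tightness), it follows that $Q=P_{Y(x)}$. As this holds for every subsequential limit, $Q^\eps_x$ converges weakly to $P_{Y(x)}$, uniformly over $x\in Y^{-1}(K)$.

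The hard part will be Step~2: $F$ lies in the domain of the \emph{limit} generator $A$, not in the domains of the generators of the $\eps$-processes, so $e^{-\lb t}F\big(Y(X^\eps_t)\big)+\int_0^t e^{-\lb s}g\big(Y(X^\eps_s)\big)\,ds$ need not be an $\eps$-martingale. The true $\eps$-martingale is the one built from $u_\eps$, and one must control $u_\eps-F\circ Y$ uniformly along the trajectories — which are tight but not a priori uniformly bounded — before taking the limit; this is exactly the point where the uniform resolvent convergence and the tightness assumption are used together.
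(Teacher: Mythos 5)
The paper does not supply a proof of this lemma: it is stated as a recalled result, with the explicit attribution ``(see Lm.\ 3.1, \cite{FW})'' and the surrounding remark that it ``adapts several lemmas from Ch.\ 8 sec.\ 3 \cite{FW}.'' There is therefore no in-paper proof against which to compare your argument.

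Your outline nonetheless reproduces, correctly, the structure of the Freidlin--Wentzell proof that the citation points to: tightness plus Prokhorov gives relative compactness of $\{Q^\eps_x\}$, so it suffices to identify every subsequential limit; the exact $\eps$-resolvent $u_\eps$ furnishes a genuine martingale for each $\eps$, and the hypothesis $u_\eps\to F\circ Y$ uniformly on $Y^{-1}(K)$, combined with tightness to confine paths to compacts, lets one pass to the limit and conclude $N^F_t = e^{-\lambda t}F(y_t)+\int_0^t e^{-\lambda s}g(y_s)\,ds$ is a $Q$-martingale for any subsequential limit $Q$; finally the measure-determining property of $\Psi$ together with solvability of $\lambda F-AF=g$ in $D$ identifies the resolvent, hence the semigroup, hence $Q=P_{Y(x)}$. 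You also correctly flag the one genuinely delicate point --- $F$ lies in the domain of the limit generator, not of the prelimit generators, so the $\eps$-martingale must be built from $u_\eps$ rather than from $F\circ Y$ --- and indicate the right remedy (uniform resolvent convergence on compacts plus tightness). This is faithful to the argument in \cite{FW}; the main thing left implicit is the quantitative passage from uniform convergence on $Y^{-1}(K)$ to convergence of the conditional-expectation identity under the weak limit, which requires a localization/truncation step, but you acknowledge this explicitly rather than glossing over it.
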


In our case $Y(M)$ is the real line. 
We use a discrete version of this lemma in our proof. 

Similarly, to Lemma 3.2 \cite{FW} one can show that 
the family of distributions $Q^\eps_x$ (those of 
$Y (X^\eps_\bullet$) with respect to the probability 
measures $P^\eps_x$ in the space $C[0,\infty)$) with 
small nonzero $\eps$ is tight. Indeed,  in our case
speed of change of $I$ is bounded. Denote 
$H(X)=H(r,\theta)=r^2/2$. Then  
\begin{itemize}
\item for every $T>0$ and $\delta>0$ there exists $H_0$ 
such that 
\[
\mathbb P_x^\eps\{\max_{0<t<T} |H(X^\eps_t)|>H_0\}<
\delta. 
\] 

\item for every compact subset $K  \subset \A$ and 
for every sufficiently small $\rho > 0$ there exists 
a constant $A_\rho$ such that for every $a \in K$ 
there exists a function $f^a_\rho(y)$ on $Y(\A)$ such 
that $f^a_\rho(a)\equiv 1, f^a_\rho(y)\equiv 0$ for 
$\rho(y,a) \ge \rho,\ 0\le f^a_\rho(y)\le 1$ everywhere, 
and $f^a_\rho(Y(X^\eps_t))+A_\rho t$ is a submartingale 
for all $\eps$ (see Stroock and Varadhan \cite{SV}).
\end{itemize}

In the proof we need an auxiliary lemmas. 
We study the random sums 
\begin{align}\label{random sum}
S_n=\sum_{k=1}^n v_k\om_k,\ \ n\ge 1,
\end{align}
where $\{\om_k\}_{k\ge 1}$ is a sequence of 
independent random variables with equal $\pm 1$ with 
equal probability $1/2$ each and $\{v_k\}_{k\ge 1}$ is 
a sequence such that 
$$
\lim_{n\to \infty} \frac{\sum_{k=1}^n v_k^2}{n}=\sigma.
$$ 
Here is a standard 

\begin{lem}\label{main lemma}
$\{S_n/n^{1/2}\}_{n\ge 1}$ converges in distribution to 
the normal dirtribution $\mathcal N(0,\sigma^2)$.
\end{lem}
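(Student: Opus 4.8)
The plan is to recognise Lemma \ref{main lemma} as an instance of the Lindeberg--Feller central limit theorem for triangular arrays, which here applies essentially for free because the summands are uniformly small; I would also give the direct characteristic-function computation, which is self-contained and about as short. First I would record that in all of the applications (and implicitly in the statement) the sequence $\{v_k\}$ is uniformly bounded, $\sup_k|v_k|\le C<\infty$; indeed in the paper $|v_k|=|v(\theta_k,r_k)|\le 1$. Set $X_{n,k}=v_k\om_k/\sqrt n$ for $1\le k\le n$. For each fixed $n$ the variables $X_{n,1},\dots,X_{n,n}$ are independent with $\E X_{n,k}=0$ (since $\E\om_k=0$) and $\mathrm{Var}(X_{n,k})=v_k^2/n$, so that
\[
s_n^2:=\sum_{k=1}^n\mathrm{Var}(X_{n,k})=\frac1n\sum_{k=1}^n v_k^2\longrightarrow\sigma^2
\]
by hypothesis, where I write $\sigma^2$ for the limit appearing in the statement.

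Next I would check the Lindeberg condition. Since $|X_{n,k}|\le C/\sqrt n$ for every $k$, for any $\eps>0$ and every $n>C^2/\eps^2$ the event $\{|X_{n,k}|>\eps\}$ is empty, hence
\[
\sum_{k=1}^n\E\!\left[X_{n,k}^2\,\mathbf 1_{\{|X_{n,k}|>\eps\}}\right]=0
\]
for all large $n$. Thus the Lindeberg condition holds trivially, and the Lindeberg--Feller theorem yields $S_n/\sqrt n=\sum_{k=1}^n X_{n,k}\Rightarrow\mathcal N(0,\sigma^2)$, which is the claim. Equivalently, I would argue directly with characteristic functions: by independence,
\[
\E\!\left[e^{itS_n/\sqrt n}\right]=\prod_{k=1}^n\cos\!\left(\frac{tv_k}{\sqrt n}\right),
\]
and since $\log\cos x=-x^2/2+O(x^4)$ as $x\to0$ while $|v_k|\le C$, summing gives
\[
\log\E\!\left[e^{itS_n/\sqrt n}\right]=-\frac{t^2}{2n}\sum_{k=1}^n v_k^2+O\!\left(\frac{t^4C^4}{n}\right)\xrightarrow[n\to\infty]{}-\frac{\sigma^2 t^2}{2},
\]
so the characteristic function converges pointwise to $e^{-\sigma^2 t^2/2}$ and Lévy's continuity theorem gives convergence in distribution to $\mathcal N(0,\sigma^2)$. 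When $\sigma^2=0$ the limit is the Dirac mass at $0$; the same computation applies and convergence in distribution still holds.

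Since everything here is textbook material there is no genuine obstacle. The only point worth a comment is the uniform boundedness of $\{v_k\}$: it is what makes the Lindeberg condition hold with no work and what controls the error term in the expansion of $\log\cos$, and it cannot be dropped altogether (a sparse sequence of growing spikes with $n^{-1}\sum_{k\le n}v_k^2$ still convergent shows the conclusion may fail). In the paper's setting the bound $|v(\theta,r)|\le 1$ makes this automatic, so the lemma applies as it is used; strictly speaking this hypothesis should be added to the statement.
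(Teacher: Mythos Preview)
Your proof is correct, and your characteristic-function argument is exactly the route the paper takes: the paper recalls the product and scaling properties of characteristic functions, cites L\'evy's continuity theorem, and then simply asserts that a direct calculation gives $\log\phi_{S_n/\sqrt n}(t)\to -\sigma^2 t^2/2$; you have carried out that calculation explicitly via $\log\cos x=-x^2/2+O(x^4)$, which is precisely what the paper leaves to the reader.

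Your Lindeberg--Feller argument is an alternative the paper does not mention; it is equally valid and perhaps cleaner conceptually, since the uniform bound $|v_k|\le C$ kills the Lindeberg condition outright without any expansion. Your remark that this boundedness hypothesis is implicitly used but not stated in the lemma is well taken: the paper's one-line ``direct calculation'' tacitly relies on it in the same place (to control the $O(x^4)$ remainder uniformly in $k$), and your counterexample sketch shows it cannot simply be dropped. In the paper's applications the bound $|v(\theta,r)|\le 1$ is assumed from the outset, so nothing is lost, but strictly the lemma as stated is incomplete and your comment identifies the gap.
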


Recall that a characteristic function of a random 
variable $X$ is a function $\phi_X:\R \to \mathbb C$ given by 
$\phi_X(t)=\E \exp (itX)$. Notice that it satisfies the 
following two properties:
\begin{itemize}
\item If $X,Y$ are independent random variables, 
then $\varphi_{X+Y}=\varphi_X\cdot\varphi_Y$.
\item $\varphi_{aX}(t)=\varphi_X(at)$.
\end{itemize}

A sufficient condition to prove convergence in distribution
is as follows.  
\begin{thm}[Continuity theorem \cite{Br}]
\label{continuity theorem}
Let $\{X_n\}_{n\ge 1},Y$ be random variables. 
If $\{\varphi_{X_n}(t)\}_{n\ge 1}$ converges to $\varphi_Y(t)$
for every $t\in\mathbb R$, then $\{X_n\}_{n\ge 1}$ converges 
in distribution to $Y$.
\end{thm}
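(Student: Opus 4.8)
The plan is to prove the standard L\'evy continuity theorem by combining a tightness estimate coming from the behaviour of characteristic functions near the origin with the uniqueness theorem for characteristic functions (injectivity of the Fourier transform on probability measures).

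First I would establish tightness of the family $\{X_n\}_{n\ge 1}$. The key is the elementary inequality
\[
\mathbb{P}\{|X_n| > 2/\delta\} \le \frac{1}{\delta}\int_{-\delta}^{\delta}\bigl(1 - \varphi_{X_n}(t)\bigr)\,dt,
\]
valid for every $\delta > 0$, which follows from Fubini's theorem applied to $\mathbb{E}\bigl[\frac{1}{\delta}\int_{-\delta}^{\delta}(1-e^{itX_n})\,dt\bigr] = \mathbb{E}\bigl[1 - \frac{\sin(\delta X_n)}{\delta X_n}\bigr]$ together with the bound $1 - \frac{\sin x}{x} \ge \frac12$ for $|x| > 2$. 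Since $\varphi_{X_n}(t) \to \varphi_Y(t)$ pointwise and $|1 - \varphi_{X_n}(t)| \le 2$, dominated convergence gives $\frac{1}{\delta}\int_{-\delta}^{\delta}(1-\varphi_{X_n})\,dt \to \frac{1}{\delta}\int_{-\delta}^{\delta}(1-\varphi_Y)\,dt$; and because $\varphi_Y$ is continuous at $0$ with $\varphi_Y(0)=1$ (it is the characteristic function of a genuine random variable $Y$), the right-hand side can be made smaller than any prescribed $\eta > 0$ by choosing $\delta$ small. Hence for that $\delta$ and all $n$ large enough $\mathbb{P}\{|X_n| > 2/\delta\} < 2\eta$, and after enlarging the bound over the finitely many remaining indices, the whole family is tight.

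Next, by Prokhorov's theorem (equivalently, Helly's selection theorem plus the tightness just proved, which rules out escape of mass to infinity), every subsequence of $\{X_n\}$ contains a further subsequence $\{X_{n_k}\}$ converging in distribution to some random variable $Z$. Along this subsequence $\varphi_{X_{n_k}}(t) \to \varphi_Z(t)$ for every $t$, since $x \mapsto \cos(tx)$ and $x \mapsto \sin(tx)$ are bounded and continuous. But by hypothesis $\varphi_{X_{n_k}}(t) \to \varphi_Y(t)$, so $\varphi_Z = \varphi_Y$ everywhere, whence $Z$ and $Y$ have the same law by the uniqueness theorem for characteristic functions. Finally, a standard subsequence argument concludes the proof: since every subsequence of $\{X_n\}$ admits a further subsequence converging in distribution to a random variable with the law of $Y$, the whole sequence $\{X_n\}$ converges in distribution to $Y$. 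The main obstacle is the tightness step --- in particular setting up the tail inequality above and exploiting continuity of $\varphi_Y$ at the origin; once tightness is secured, the remainder is routine compactness combined with Fourier uniqueness.
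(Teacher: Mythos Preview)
Your proof is correct and is the standard textbook argument (tightness via the tail bound from the characteristic function near the origin, Prokhorov/Helly compactness, Fourier uniqueness, subsequence principle). Note however that the paper does not give its own proof of this statement: it is quoted as a known result from Breiman's \emph{Probability} and used as a black box to verify Lemma~\ref{main lemma}. So there is nothing to compare against beyond observing that your argument is exactly the classical proof one finds in such references.
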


A direct calculation shows that 
$$
\lim_{n\to \infty} \log \phi_{S_n/\sqrt n}(t)=-\dfrac{t^2}{2\sigma^2}
\qquad \text{ for all }\ \  t\in \R.   
$$ 
This way of proof was communicated to the authors by 
Yuri Lima. 

\

{\bf Acknowledgement:} The authors warmly thank Leonid Koralov 
for numerious envigorating discussions of various topics involving 
stochatic processes. Communications with Dmitry Dolgopyat, 
Yuri Bakhtin, Jinxin Xue were useful for the project 
and gladly acknowledged by the authors. The second author 
acknowledges partial support of the NSF grant DMS-1402164.

\end{document}